\newcommand{\bbC}{\mathbb C}
\newcommand{\N}{\mathbb N}
\newcommand{\bbZ}{\mathbb Z}
\newcommand{\R}{\mathbb R}
\newcommand\floor[1]{\lfloor#1\rfloor}
\DeclareMathOperator{\SU}{SU}
\DeclareMathOperator{\SO}{SO}
\DeclareMathOperator{\SL}{SL}
\DeclareMathOperator{\Sp}{Sp}
\newcommand{\fg}{\mathfrak g}
\newcommand{\fa}{\mathfrak a}
\newcommand{\fn}{\mathfrak n}
\newcommand{\fk}{\mathfrak k}
\newcommand{\cX}{\mathcal{X}}
\newcommand{\G}{\Gamma}
\newcommand{\bk}{\backslash}
\newcommand{\vf}{\varphi}
\newcommand{\<}{\left\langle}
\renewcommand{\>}{\right\rangle}
\newcommand{\cA}{\mathcal{A}}
\newcommand{\cC}{\mathcal{C}}
\newcommand{\cH}{\mathcal{H}}
\newcommand{\cM}{\mathcal{M}}
\newtheorem{Thm}{Theorem}[section]
\newtheorem{Prop}[Thm]{Proposition}
\newtheorem{Lem}[Thm]{Lemma}
\newtheorem{Cor}[Thm]{Corollary}
\theoremstyle{definition}
\newtheorem{Def}[Thm]{Definition}
\newtheorem{Rem}{Remark}
\theoremstyle{remark}
\theoremstyle{definition}
\numberwithin{equation}{section}
\title{Shrinking targets problems for flows on homogeneous spaces}
\author{Dubi Kelmer}
\address{Department of Mathematics, Boston College, Chestnut Hill MA 02467-3806, USA}
\email{kelmer@bc.edu}
\author{Shucheng Yu}
\address{Department of Mathematics, Boston College, Chestnut Hill MA 02467-3806, USA}
\email{shucheng.yu@bc.edu}
\thanks{This work was partially supported by NSF grant DMS-1401747 and NSF CAREER grant DMS-1651563.}
\begin{document}

\begin{abstract}
We study shrinking targets problems for discrete time flows on a homogenous space $\G\bk G$ with $G$ a semisimple group and $\G$ an irreducible lattice. Our results apply to both diagonalizable and unipotent flows, and apply to very general families of shrinking targets. As a special case, we establish logarithm laws for cusp excursions of unipotent flows answering a question of Athreya and Margulis.\end{abstract}
 \maketitle

 \section{Introduction}
Consider an ergodic dynamical system given by the iteration of a measure preserving map $T: \cX\to\cX$ on a probability space $(\cX,\mu)$. From ergodicity, it follows that generic orbits become dense, and shrinking target problems are a way to quantify the rate. That is, to determine how fast we can make a sequence of targets shrink so that a typical orbit will keep hitting the targets infinitely often.

A natural bound for this rate comes from the easy half of Borel-Cantelli, stating that for any sequence of sets, $\{B_m\}_{m\in\N}$, if
$\sum_{m=1}^{\infty} \mu(B_m)<\infty$ then for a.e. $x\in \cX$ from some point on $T^m x\not\in B_m$.
For chaotic dynamical systems, it could be expected that this bound is sharp, and much work has gone into proving this in various examples of fast mixing dynamical systems (under some regularity restrictions on the shrinking sets).
In particular, this was done for shrinking cusp neighborhoods of homogenous spaces \cite{Sullivan1982, KleinbockMargulis1999,GorodnikShah11,AthreyaMargulis09,KelmerMohammadi12, AthreyaMargulis14,Yu17},
and more generally for shrinking metric balls in a metric space \cite{ChernovKleinbock01,Dolgopyat04,Galatolo07,KleinbockZhao2017}.

Recently, in \cite{Kelmer17b}, the first author introduced a new method for attacking this problem for discrete time homogenous flows on (the frame bundle of) finite volume hyperbolic manifolds. This method works for any monotone family of shrinking targets in the hyperbolic manifold, and applies also for unipotent flows with arbitrarily slow polynomial mixing rate. In this paper we adapt this method to treat the general case of discrete time homogenous flows on a homogenous space $\cX=\G\bk G$ with $G$ a connected semisimple Lie group with finite center and no compact factors, and $\G$ an irreducible lattice.

\subsection{General setup, terminology, and notations}
Let $G$ denote a connected semisimple Lie group with finite center and no compact factors, let $\G\leq G$ be an irreducible lattice, and let $\mu$ denote the $G$-invariant probability measure on $\cX=\G\bk G$, coming from the Haar measure of $G$.
We fix once and for all a maximal compact subgroup $K\leq G$ and denote by $\cH=G/K$ the corresponding symmetric space. We say that a subset $B\subseteq \cX$ is spherical if it is invariant under the right action of $K$ and we identify spherical sets as subsets of the locally symmetric space $\G\bk\cH$.

One-parameter flows on $\cX=\G\bk G$ are given by the right action of one-parameter subgroups of $G$.
Explicitly, the one-parameter group generated by an element, $X_0$, in the Lie algebra, $\fg=\mathrm{Lie}(G)$, is given by
$\{h_t=\exp(tX_0):t\in \R\}$.   The corresponding discrete time flow is then given by the action of the discrete subgroup  $H=\{h_m\}_{m\in \bbZ}$.
%For any $m\in \N$ denote by
%\begin{equation}\label{e:H_m}
%H_m=\{h_j: 0\leq j\leq m\}
%\end{equation}
We will always assume that the subgroup is unbounded and recall that, by Moore's ergodicity theorem,  the action of any unbounded subgroup is ergodic and mixing.
%Given such a discrete time flow, for any $x\in \cX$ and $k\in \N$ we denote by $xH_k=\{xh_n: 0\leq n\leq k\}$ the corresponding truncated forward orbit.

We say that a family $\{B_t\}_{t>0}$ is a monotone family of shrinking targets if $B_t\subseteq B_s$ when $t\geq s$ and $\mu(B_t)\to 0$, and we say it is a family of spherical shrinking targets if all sets are spherical.
Given an unbounded discrete time one-parameter flow $\{h_m\}_{m\in \mathbb{Z}}$, following \cite{ChernovKleinbock01}, we say that a sequence of sets $\{B_m\}_{m\in \N}$
%with $\sum_m \mu(B_m)=\infty$
is \textit{Borel-Cantelli} (BC) for $\{h_m\}_{m\in\mathbb{Z}}$, if for a.e. $x\in\cX$ the set
$\{m\in \N\ |\ xh_m\in B_m\}$ is unbounded, and we say it is \textit{strongly Borel-Cantelli} (sBC) if for a.e. $x\in \cX$
\begin{equation}\label{e:sBC}
\lim_{m\to\infty}\frac{\#\{1\leq j\leq m: xh_j\in B_j\}}{\sum_{1\leq j\leq m}\mu(B_j)}=1.\end{equation}
%Following \cite{Athreya2009},
We say that a collection of sets is (strongly)  Borell-Cantelli (resp. monotone sBC) if any sequence (resp. monotone sequence) from this collection with divergent measure is (strongly) Borell-Cantelli.

In what follows we adopt the notation $A(t)\ll B(t)$ or $A(t)=O(B(t))$ to indicate that there is a constant $c>0$ such that $A(t)\leq cB(t)$, and we write $A(t)\asymp B(t)$ to indicate that
$A(t)\ll B(t)\ll A(t)$. The implied constants may always depend on the group, $G$, the lattice, $\G$, and the flow that we think of as fixed. We will use subscripts to indicate the dependance of the implied constants on any additional parameters.

\subsection{Logarithm laws}
Our first result establishes a logarithm law for the first hitting time function (see \cite{GalatoloKim07} for the relation between such logarithm laws and Borel-Cantelli properties).
Given an unbounded discrete time flow and a subset $B\subseteq \cX$ the first hitting time function is defined for $x\in \cX$ by
\begin{equation}\label{e:hitting}
\tau_{B}(x)=\min\{m\in \N: xh_m\in B\}.
\end{equation}
\begin{Thm}\label{t:hittingtime}
Assume that either $G$ has property $(T)$, or that $G$ is of real rank one.
Let $\{B_t\}_{t>0}$ denote a monotone family of spherical shrinking targets in $\cX=\G\bk G$. Let $\{h_m\}_{m\in \bbZ}$ denote an unbounded discrete time flow on $\cX$. Then for a.e. $x\in \cX$
\begin{equation}
\label{hit}\lim_{t\to\infty}\frac{\log(\tau_{B_t}(x))}{-\log(\mu(B_t))}=1.
\end{equation}
\end{Thm}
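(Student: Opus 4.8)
I would prove the two inequalities $\liminf_{t\to\infty}\ge1$ and $\limsup_{t\to\infty}\le1$ separately. The first is soft: it uses only the convergence half of the Borel--Cantelli lemma together with the $G$-invariance of $\mu$, and needs neither mixing nor the spectral hypothesis. The second is where the content lies: I would deduce it from the assertion that, under the stated hypothesis (that $G$ has property $(T)$, or $G$ has real rank one), every monotone family of spherical shrinking targets on $\cX$ is strongly Borel--Cantelli -- this is the main technical theorem of the paper, proved by an effective-mixing argument adapting the method of \cite{Kelmer17b}. Granting it, \eqref{hit} follows by the standard relation between strong Borel--Cantelli properties and logarithm laws for hitting times (cf.\ \cite{GalatoloKim07}), which I now sketch.

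\textbf{Lower bound.}
Fix $\varepsilon>0$ and, using $\mu(B_t)\to0$, set $t_k:=\inf\{t>0:\mu(B_t)\le e^{-k}\}$, so $t_k\uparrow\infty$ and $\mu(B_{t_k})\le e^{-k}$. The set $\{x:\tau_{B_{t_k}}(x)\le\mu(B_{t_k})^{-1+\varepsilon}\}$ is a union of at most $\mu(B_{t_k})^{-1+\varepsilon}$ right translates of $B_{t_k}$, hence has measure $\le\mu(B_{t_k})^{\varepsilon}\le e^{-k\varepsilon}$, which is summable; by Borel--Cantelli, for a.e.\ $x$ one has $\tau_{B_{t_k}}(x)>\mu(B_{t_k})^{-1+\varepsilon}\ge e^{k(1-\varepsilon)}$ for all large $k$. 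For an arbitrary large $t$, taking the integer $k$ with $k<-\log\mu(B_t)\le k+1$ forces $t\ge t_k$, hence $B_t\subseteq B_{t_k}$ and $\tau_{B_t}(x)\ge\tau_{B_{t_k}}(x)>e^{k(1-\varepsilon)}$, so $\log\tau_{B_t}(x)\ge(1-\varepsilon)\bigl(-\log\mu(B_t)-1\bigr)$. Dividing by $-\log\mu(B_t)\to\infty$ and letting $\varepsilon\to0$ along a countable set gives $\liminf_{t\to\infty}\ge1$ for a.e.\ $x$.

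\textbf{Upper bound.}
Fix $\varepsilon>0$ and set $s(m):=\inf\{t>0:\mu(B_t)<m^{-1/(1+\varepsilon)}\}$, $C_m:=B_{s(m)}$; then $\{C_m\}_{m\ge1}$ is a non-increasing sequence drawn from the family with $\mu(C_m)\le m^{-1/(1+\varepsilon)}$, and one checks that $\sum_m\mu(C_m)\asymp\sum_m m^{-1/(1+\varepsilon)}=\infty$ (the jumps of $t\mapsto\mu(B_t)$ costing only a bounded factor, since successive large jumps have geometrically decaying measure). By the strong Borel--Cantelli property, for a.e.\ $x$ the hitting set $G_x:=\{m:xh_m\in C_m\}$ is infinite with $\#(G_x\cap[1,m])=(1+o(1))\sum_{j\le m}\mu(C_j)\asymp m^{\varepsilon/(1+\varepsilon)}$, so its $n$-th element $g_n$ satisfies $\log g_n=\frac{1+\varepsilon}{\varepsilon}\log n+O(1)$ and $\tau_{C_{g_n}}(x)\le g_n$. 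Given an arbitrary large $t$, pick $m$ with $s(m)\le t<s(m+1)$ and $n$ with $g_n\le m<g_{n+1}$; then $t<s(m+1)\le s(g_{n+1})$ gives $B_t\supseteq C_{g_{n+1}}$ and $\tau_{B_t}(x)\le\tau_{C_{g_{n+1}}}(x)\le g_{n+1}$, while $t\ge s(m)$ gives $\mu(B_t)\le\mu(C_m)\le g_n^{-1/(1+\varepsilon)}$. Hence $\frac{\log\tau_{B_t}(x)}{-\log\mu(B_t)}\le(1+\varepsilon)\frac{\log g_{n+1}}{\log g_n}\to1+\varepsilon$ as $t\to\infty$; letting $\varepsilon\to0$ along a countable set gives $\limsup_{t\to\infty}\le1$ for a.e.\ $x$, which together with the lower bound proves \eqref{hit}.

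\textbf{Main obstacle.}
All of the genuine difficulty is in the strong Borel--Cantelli input used for the upper bound. The targets $B_t$ carry no regularity, so $\mathbbm{1}_{B_t}$ cannot be efficiently smoothed, and for unipotent flows the mixing rate is only polynomial, possibly with an arbitrarily small exponent; a naive variance bound for the counting functions $\#\{j\le m:xh_j\in B_j\}$ fails because its error term involves Sobolev norms of $\mathbbm{1}_{B_t}$, which are far too large. Following \cite{Kelmer17b}, the plan is to carry out the variance estimate while exploiting (a) the monotonicity of the family, so that the relevant correlation sums along the orbit telescope, and (b) the spherical symmetry, which allows one to bound the quantities that arise by geometric data on the locally symmetric space $\G\bk\cH$ together with the spectral gap of the $h$-action on $L^{2}(\cX)^{K}$ -- thereby replacing the forbidden Sobolev norms by quantities polynomial in $\mu(B_t)$, which suffices even under slow mixing; this is exactly what property $(T)$ or real rank one is used for. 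The remaining technical points -- the passage from $\{C_m\}$ back to the full family at jumps of $t\mapsto\mu(B_t)$ (adjoin the left-limit sets $\bigcap_{s<t_0}B_s$, again spherical, and use the geometric decay of large jumps) and the divergence of $\sum_m\mu(C_m)$ -- are routine.
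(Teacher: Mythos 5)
Your lower bound is correct and is essentially the paper's own argument (Lemma \ref{lower bound}): the trivial estimate $\mu(\{x:\tau_B(x)\le m\})\le m\mu(B)$ plus the convergence half of Borel--Cantelli along a suitable discretization. The problem is the upper bound. You propose to feed it the statement that, under ``property $(T)$ or real rank one'', \emph{every} monotone family of spherical shrinking targets is strongly Borel--Cantelli, and you call this the main technical theorem of the paper. It is not, and it is not available. The sBC statement (Theorem \ref{t:SBC}) is proved only for SD flows, i.e.\ flows with correlation decay $|t|^{-\eta}$ for some $\eta>1$; property $(T)$ by itself yields only the rate $|t|^{-(1-\epsilon)}$ of Theorem \ref{t:edecay}, which is exactly \emph{not} summable, so the Schmidt-type variance bound \eqref{keybound} fails for a general unipotent flow on a property $(T)$ group (the off-diagonal sum $\sum_{m\le i\ne j\le n}\sqrt{\mu_i\mu_j}\,|i-j|^{-(1-\epsilon)}$ is too large by a factor $n^{\epsilon}$). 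For rank one groups without property $(T)$ the situation is worse: Theorem \ref{t:MBC} gives only (monotone) BC, with sBC only under the extra hypothesis that $m\mu(B_m)$ stays bounded, and for $G$ locally isomorphic to $\SO(2,1)$ even plain BC for monotone spherical families is not established without further assumptions on the shrinking rate --- yet Theorem \ref{t:hittingtime} does cover $\SO(2,1)$. What the paper actually does is weaker and different: it bounds the measure of the missing set $\mathcal M^1_{m,B}=\{x:\tau_B(x)>m\}$ by $\mu(\mathcal C_{m,\chi_B})$, estimates the latter by Chebyshev from an effective \emph{mean ergodic theorem} (an $L^2$ bound on the Birkhoff average $\beta_m^+(\chi_B)$, Propositions \ref{pwt}, \ref{ewot} and \ref{est}), and then sums over the dyadic decomposition of Lemma \ref{ght} with $m\asymp\mu(B)^{-(1+\delta)}$. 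In that regime $m\mu(B)\asymp\mu(B)^{-\delta}$ grows polynomially, so even the non-summable $(1-\epsilon)$-rate, and even the small-spectral-gap exceptional spectrum in rank one, give $\sum_{\ell}\mu\bigl(\mathcal M^1_{\lfloor 2^{\ell(1+\delta)}\rfloor,\underline B_\ell}\bigr)<\infty$. This is the idea your proposal is missing: the log law needs much less than sBC, which is precisely why it holds in cases where sBC is out of reach.

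A secondary gap: even granting sBC, your construction $C_m=B_{s(m)}$ need not satisfy $\sum_m\mu(C_m)=\infty$. A monotone family may have $\mu(B_t)$ jumping, say, from $2^{-2^k}$ to $2^{-2^{k+1}}$, in which case $\mu(C_m)$ can be comparable to $m^{-2/(1+\varepsilon)}$ rather than $m^{-1/(1+\varepsilon)}$ and the series converges; the parenthetical claim that large jumps have ``geometrically decaying measure'' has no justification for an arbitrary monotone family, and passing to the left-limit sets restores divergence but destroys the upper bound $\mu(C_m)\le m^{-1/(1+\varepsilon)}$ that your estimate of $\log g_{n+1}/\log g_n$ relies on. The paper sidesteps all of this by working with the sets $\underline B_\ell,\overline B_\ell$ attached to the dyadic blocks $\{t:\mu(B_t)\in[2^{-\ell-1},2^{-\ell})\}$ and summing only over the $\ell$ for which the block is nonempty, never needing any divergence statement.
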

\begin{Rem}\label{r:conditional}
Since all simple groups without property $(T)$ are of real rank one, Theorem \ref{t:hittingtime} holds for all noncompact connected simple groups with finite center. In fact, the only groups of higher rank without property $(T)$ are (almost) products $G=\prod G_i$ with at least one of the factors a simple rank one group without property $(T)$. In this case, the same result still holds, unless the flow is a unipotent flow on one of those rank one factors.  Even then, according to the congruence subgroup conjecture (see e.g. \cite{Rapinchuk92,Raghunathan04}), for higher rank groups the only irreducible lattices are congruence lattices.  For congruence lattices, if we further assume the Selberg-Ramanujan Conjecture  (see \cite{Shahidi04,Sarnak05}), the resulting bounds on decay of matrix coefficients are sufficient to obtain the same result also in these cases (in fact, with the exception of $G_i$ locally isomorphic to $\SL_2(\R)$ for some $i$ the unconditional bounds towards the Selberg-Ramanujan Conjecture obtained in  \cite{KimSarnak03, BlomerBrumley11,BergeronClozel13,BergeronClozel17} are already sufficient). Hence, conditional on these conjectures our result holds in full generality.
\end{Rem}

%\begin{Rem}
%Instead of the first hitting time we can also consider the $i$th hitting time function
%$$\tau_B^i(x)=\inf\{m\in \N: |xH_m^+\cap B|=i\}.$$
%We note that the same result, with essentially the same proof, holds if we replace $\tau_{B_t}$ with $\tau^i_{B_t}$ for any fixed $i\in \N$.
%\end{Rem}

Since spherical sets in the homogenous space, $\G\bk G$, can be naturally identified with subsets of the locally symmetric space, $\G\bk \cH$, our result holds for any monotone sequence of shrinking targets in $\G\bk \cH$. A special case of such a sequence, that has received much attention, occurs when $\G\bk \cH$ is not compact and the shrinking sets are cusp neighborhoods  \cite{Sullivan1982, KleinbockMargulis1999,AthreyaMargulis09, KelmerMohammadi12,AthreyaMargulis14,Yu17}.   For these problems, logarithm laws can be expressed in terms of a distance function measuring how far out a point is in the cusp. Explicitly, given a continuous distance function
on $\G\bk \cH$ that we lift to a $K\times K$-invariant function $d(\cdot,\cdot)$ on $\G\bk G\times \G\bk G$,  we can define spherical shrinking cusp neighborhoods of $\cX$ by
$$B_{t}=\{x\in \cX:  d(x,x_0)>t\},$$
where $x_0\in \cX$ is some fixed base point.
We assume that the measure of these cusp neighborhoods decay exponentially with rate $\varkappa>0$ in the sense that
\begin{equation}\label{e:cuspdecay}
\lim_{t\to\infty} \frac{-\log\mu(B_t)}{t}=\varkappa.
\end{equation}
In particular, this holds when the distance function\footnote{While these distance function are defined on $\G\bk G$ and are not necessarily lifts from a distance function on $\G\bk \cH$, the corresponding cusp neighborhoods can always be approximated by spherical cusp neighborhoods as explained in Remark \ref{r:sphericalcusps} below.} is determined by a right $G$-invariant, bi $K$-invariant Riemannian
metric on $G$ as in \cite{KleinbockMargulis1999}, or more generally by a norm-like pseudometric on $G$ as in  \cite{AthreyaMargulis14}.
For any such distance function, a consequence of Theorem \ref{t:hittingtime} applied to this family of shrinking targets is the following.
\begin{Cor}\label{loglaws}
For $G$ a connected semisimple Lie group with finite center and no compact factors, and $\G\leq G$ an irreducible lattice,
for any unbounded one-parameter flow $\{h_t\}_{t\in\mathbb R}$ on $\cX= \G\bk G$, for a.e. $x\in \cX$
\begin{equation}\label{e:loglawcusp}
\limsup_{t\to\infty}\frac{d(xh_t,x_0)}{\log t}=\frac{1}{\varkappa}.
\end{equation}
\end{Cor}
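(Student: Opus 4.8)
The plan is to deduce Corollary~\ref{loglaws} from Theorem~\ref{t:hittingtime} applied to the monotone family of spherical cusp neighborhoods $B_t=\{x\in\cX:\ d(x,x_0)>t\}$, using only the exponential decay hypothesis \eqref{e:cuspdecay} together with an elementary continuity estimate along the flow. The starting point is that combining \eqref{e:cuspdecay} with \eqref{hit} gives, for a.e.\ $x\in\cX$,
\[
\log\tau_{B_t}(x)=(\varkappa+o(1))\,t\qquad\text{as }t\to\infty ,
\]
so the first integer time at which the orbit of $x$ enters $B_t$ satisfies $\tau_{B_t}(x)=e^{(\varkappa+o(1))t}$; note also that $\tau_{B_t}(x)$ is non-decreasing in $t$ and tends to $\infty$ since the $B_t$ are nested with $\mu(B_t)\to0$. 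Both inequalities in \eqref{e:loglawcusp} will fall out of this single asymptotic.

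For the lower bound, fix a large $t$ and write $m=\tau_{B_t}(x)$. By definition of the hitting time \eqref{e:hitting} we have $d(xh_m,x_0)>t$, while $\log m=(\varkappa+o(1))t$, so that $t=(\tfrac1\varkappa+o(1))\log m$ and hence $\frac{d(xh_m,x_0)}{\log m}>\frac1\varkappa+o(1)$. Letting $t\to\infty$, so that $m\to\infty$ along a sequence of positive integers, gives $\limsup_{t\to\infty}\frac{d(xh_t,x_0)}{\log t}\ge\frac1\varkappa$ for a.e.\ $x$.

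For the upper bound, fix $\varepsilon>0$ and set $t(m)=(\tfrac1\varkappa+\varepsilon)\log m$. Then $\log\tau_{B_{t(m)}}(x)=(\varkappa+o(1))\,t(m)=(1+\varkappa\varepsilon+o(1))\log m$, so for all $m$ large enough (depending on $x$ and $\varepsilon$) we get $\tau_{B_{t(m)}}(x)\ge m^{1+\varkappa\varepsilon/2}>m$; in particular $xh_m\notin B_{t(m)}$, i.e.\ $d(xh_m,x_0)\le(\tfrac1\varkappa+\varepsilon)\log m$. To pass from integer times to real times I would use that the orbit arc $\{xh_t:\ t\in[m,m+1]\}$ has bounded length in the symmetric space, which (for the metrics in question, and in general after the approximation by spherical cusp neighborhoods mentioned in the footnote) yields a constant $c$ with $|d(xh_t,x_0)-d(xh_{\floor{t}},x_0)|\le c$ for all large $t$; hence $d(xh_t,x_0)\le(\tfrac1\varkappa+\varepsilon)\log t+c$ for all large $t$, and so $\limsup_{t\to\infty}\frac{d(xh_t,x_0)}{\log t}\le\frac1\varkappa+\varepsilon$. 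Since the exceptional null set produced by Theorem~\ref{t:hittingtime} does not depend on $\varepsilon$, letting $\varepsilon\to0$ gives $\limsup_{t\to\infty}\frac{d(xh_t,x_0)}{\log t}\le\frac1\varkappa$ a.e., which together with the lower bound proves \eqref{e:loglawcusp}.

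I expect the substantive content to lie entirely in Theorem~\ref{t:hittingtime}; the only genuine loose ends in the deduction are the discrete-to-continuous continuity estimate above (handling non-integer times and the fact that $d$ is merely $K\times K$-invariant), and a bit of bookkeeping with the hypotheses: since Theorem~\ref{t:hittingtime} is proved under the rank-one or property~$(T)$ assumption, for the semisimple groups not directly covered one must appeal to the extensions discussed in Remark~\ref{r:conditional}.
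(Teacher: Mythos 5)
Your proposal is correct and follows essentially the same route as the paper: the lower bound $\limsup_{t\to\infty} d(xh_t,x_0)/\log t\ge 1/\varkappa$ is extracted from the hitting-time asymptotic via the minimality of $\tau_{B_t}(x)$ exactly as in the paper's lemma, and your upper bound reduces, as in the paper, to the easy half of Borel--Cantelli (which is what the $\liminf\ge 1$ half of Theorem \ref{t:hittingtime} amounts to, and which holds unconditionally) together with the discrete-to-continuous continuity estimate and the spherical approximation of Remark \ref{r:sphericalcusps}. The only point where the paper is more careful is the higher-rank case without property $(T)$: there it uses that $\G\bk G$ is noncompact to invoke the settled congruence subgroup property and the known bounds towards Selberg--Ramanujan, and cites earlier work for products of copies of $\SL_2(\R)$ and $\SL_2(\bbC)$, whereas you flag this bookkeeping but defer it to Remark \ref{r:conditional}.
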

For diagonalizable flows, the logarithm law for cusp excursions \eqref{e:loglawcusp} was established by Sullivan \cite{Sullivan1982} for $G=\SO_0(d,1)$, and by Kleinbock and Margulis \cite{KleinbockMargulis1999} in general.
For unipotent flows, Athreya and Margulis \cite{AthreyaMargulis14} showed that there is some $c\in (0,1]$ so that for a.e. $x\in \cX$,
$\limsup_{t\to\infty}\frac{d(xh_t,x_0)}{\log t}=\frac{c}{\varkappa}$, and raised the question if it is always the case that the constant $c=1$. This question was previously answered affirmatively in some specific examples, such as unipotent flows on the space of lattices and on hyperbolic manifolds \cite{AthreyaMargulis09,KelmerMohammadi12,Yu17}. Our result settles this problem and gives an affirmative answer to their question in general. %when $G$ is a simple group, and in complete generality assuming the Ramanujan-Selberg conjecture.

\subsection{Summable decay of matrix coefficients}
A key ingredient in the proof of  Theorem \ref{t:hittingtime} is a uniform rate on decay of spherical matrix elements under one-parameter flows.
We say $\vf\in L^2(\G\bk G)$ is spherical if it is invariant under the action of $K$, and we identify the space of spherical functions with $L^2(\G\bk \cH)=L^2(\G\bk G/K )$.
Let $\pi$ denote the regular representation of $G$ on $L^2(\G\bk G)$ and let $L^2_0(\G\bk G)$ denote the space orthogonal to the constant functions. For any $\psi,\vf\in L^2(\G\bk G)$ and $g\in G$ the corresponding matrix element is given by
$$\<\pi(g)\vf,\psi\>=\int_{\G\bk G}\vf(xg)\overline{\psi(x)}d\mu(x).$$
By using the results of Oh \cite{Oh98} on effective property $(T)$, we show the following uniform bound on decay of matrix coefficients for one-parameter flows.
\begin{Thm}\label{t:edecay}
Assume that  $G$ has property $(T)$ and let $\{h_t\}_{t\in \R}$ denote an unbounded one-parameter subgroup.  For all sufficiently small $\epsilon>0$,
for any spherical $\vf,\psi\in L^2_0(\G\bk G)$ and for all $|t|\geq 1$ we have
$$|\<\pi(h_t)\vf,\psi\>|\ll_\epsilon\frac{\|\vf\|_2\|\psi\|_2}{|t|^{1-\epsilon}}.$$
\end{Thm}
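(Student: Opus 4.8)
The plan is to reduce, by $K$-biinvariance, to a bound on a Harish--Chandra-type majorant evaluated at the Cartan ($KAK$) projection of $h_t$, to observe that this projection grows at least logarithmically in $|t|$, and to check that the decay of the majorant along the relevant direction balances that growth precisely.

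\emph{Reduction to the Cartan projection.} Since $\vf$ and $\psi$ are $K$-invariant, $\<\pi(k_1gk_2)\vf,\psi\>=\<\pi(g)\vf,\psi\>$ for all $k_1,k_2\in K$, so $\<\pi(h_t)\vf,\psi\>$ depends only on the vector $Z(t)\in\overline{\fa^+}$ with $h_t\in K\exp(Z(t))K$. Oh's effective property $(T)$ estimate \cite{Oh98}, applied with the trivial $K$-type (so that all $K$-dimension factors equal $1$), majorizes $|\<\pi(g)\vf,\psi\>|$ by $\|\vf\|_2\|\psi\|_2$ times a product $\xi_S(\exp Z)$ of Harish--Chandra functions of rank-one subgroups, one for each element of a system $S$ of strongly orthogonal roots, where $S$ may be chosen to contain the highest root $\theta$. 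The factor attached to $\theta$ is bounded by $(1+\theta(Z))e^{-\theta(Z)/2}$ (it is the Harish--Chandra function of a rank-one subgroup whose $\rho$ is no smaller than $\tfrac12\theta$, equality holding for $\SL_2(\R)$), while all remaining factors are $\leq1$. Hence
\[
|\<\pi(h_t)\vf,\psi\>|\ \ll_\epsilon\ \|\vf\|_2\,\|\psi\|_2\,e^{-(\frac12-\epsilon)\,\theta(Z(t))}.
\]

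\emph{Growth of $Z(t)$.} It remains to prove $\theta(Z(t))\geq 2\log|t|-O(1)$ for $|t|\geq1$. Fix an $\mathrm{Ad}(K)$-invariant inner product on $\fg$; then $\mathrm{ad}(Z(t))$ is symmetric with top eigenvalue $\max_\al\al(Z(t))=\theta(Z(t))$ (as $Z(t)$ is dominant and $\theta$ is the highest root), whence $e^{\theta(Z(t))}=\|\mathrm{Ad}(h_t)\|_{\mathrm{op}}$. Let $X_0=X_e+X_h+X_n$ be the Jordan decomposition of the generator into commuting elliptic, hyperbolic, and nilpotent parts, so $\mathrm{Ad}(h_t)=e^{t\,\mathrm{ad}(X_e)}e^{t\,\mathrm{ad}(X_h)}e^{t\,\mathrm{ad}(X_n)}$ with the first factor bounded. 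If $X_h\neq0$ (for instance whenever the flow is diagonalizable), the middle factor has operator norm $\asymp e^{c|t|}$ with $c>0$, so $\theta(Z(t))\asymp|t|$ and one in fact obtains exponential decay, much stronger than claimed. If $X_h=0$ and $X_n\neq0$ (a unipotent flow), then $\|\mathrm{Ad}(h_t)\|_{\mathrm{op}}\asymp\|e^{t\,\mathrm{ad}(X_n)}\|_{\mathrm{op}}$; since by Jacobson--Morozov $X_n$ lies in an $\fsl_2$-triple, on which $\mathrm{ad}(X_n)$ acts along a chain of length three, $\mathrm{ad}(X_n)$ is nilpotent of degree $m\geq2$, and therefore $\|e^{t\,\mathrm{ad}(X_n)}\|_{\mathrm{op}}\asymp|t|^m$, i.e.\ $\theta(Z(t))=m\log|t|+O(1)\geq2\log|t|-O(1)$. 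Substituting into the displayed bound gives $|\<\pi(h_t)\vf,\psi\>|\ll_\epsilon\|\vf\|_2\|\psi\|_2\,|t|^{-(1-2\epsilon)}$, and renaming $\epsilon$ proves the theorem. (If $G$ is not simple, apply the argument to each simple factor, each of which inherits property $(T)$.)

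\emph{Main difficulty.} The substantive input is Oh's uniform pointwise estimate, and specifically the fact that the exponent of the highest-root factor of the majorant is exactly $\tfrac12$; this is what makes $e^{-\theta(Z(t))/2}$ cancel the $|t|^{-1}$ produced by $\theta(Z(t))\approx2\log|t|$, and the exponent thus obtained cannot be improved past $1$, the extremal case being a unipotent flow through a minimal nilpotent (where $Z(t)\approx(\log|t|)\theta^\vee$ and the spherical matrix coefficient is genuinely of size $\asymp(\log|t|)/|t|$). A secondary, routine point is the identification $e^{\theta(Z(t))}=\|\mathrm{Ad}(h_t)\|_{\mathrm{op}}$ together with the elementary fact that $\|e^{t\,\mathrm{ad}(X_n)}\|_{\mathrm{op}}\asymp|t|^m$ for $|t|\geq1$ when $\mathrm{ad}(X_n)$ is nilpotent of degree $m$.
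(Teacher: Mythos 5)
Your argument is correct and is essentially the paper's own proof: Oh's uniform pointwise bound, specialized to the highest-root member of a (maximal) strongly orthogonal system, gives the majorant $e^{-(\frac12-\epsilon)\lambda_1(X(t))}$, and the Jordan decomposition together with Jacobson--Morozov (i.e.\ $\mathrm{ad}(X_{\fn})^2\neq 0$) gives $e^{\lambda_1(X(t))}\gg t^2$, exactly as in Propositions \ref{mprop}, \ref{decay} and \ref{p:decay} (your operator norm plays the role of the paper's Hilbert--Schmidt norm). The only place you are thinner than the paper is the reduction to simple factors, where one must use irreducibility of $\G$ to see that every tensor factor of a representation occurring in $L^2_0(\G\bk G)$ is nontrivial, and must handle rank-one factors with property $(T)$ (locally $\Sp(d,1)$ or $\mathrm{F}_4^{-20}$) by the bound on their complementary series rather than by the rank-$\geq 2$ form of Oh's theorem.
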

%\begin{Rem}
%For $G$ simple, the only groups without  property $(T)$ are of real rank one.
% in which case we no longer have such a uniform rate for the decay of matrix coefficients. Nevertheless, in these cases we can use the analysis in \cite{} to obtain a similar result using the spectral decomposition.
%\end{Rem}
%\begin{Rem}
%If we remove the assumption that the functions are spherical we get a similar result with the $L^2$-norm on the right hand side replaced by appropriate Sobolev norms. However, for our application it is crucial that the bounds depend only on  $L^2$-norms, which is the reason we restrict our results to spherical sets.
%\end{Rem}

It is remarkable that this uniform rate, which is exactly the rate needed for Theorem \ref{t:hittingtime}, can be obtained for all one-parameter flows for all semisimple groups with property $(T)$. The rate of decay here is $\epsilon$ away from being summable, and, while this  is good enough to establish a logarithm law, for other applications it just falls short. In many cases, however, we can establish a slightly better rate. In order to distinguish these cases we use the following terminology.
\begin{Def}
We say that a one-parameter flow on $\G\bk G$ has \em{Summable Decay} (or  SD) if there is $\eta>1$ %and a constant $C>0$
such that for any spherical $\vf,\psi\in L^2_0(\G\bk G)$, for all $|t|\geq 1$ we have
$$|\<\pi(h_t)\vf,\psi\>|\ll_\eta\frac{\|\vf\|_2\|\psi\|_2}{|t|^{\eta}}.$$
\end{Def}
For SD flows we can show the stronger result implying that the bound coming from Borel-Cantelli is sharp.
\begin{Thm}\label{t:SBC}
For any discrete time one-parameter SD flow on $\Gamma\backslash G$, the collection of spherical subsets is strongly Borel-Cantelli.
%Then for any sequence of spherical measurable sets $\{B_n\}_{n\in \N}$  in $\G\bk G/K$ if $\sum_n \mu(B_n)=\infty$ then for a.e. $x\in \cX$
%$$\lim_{m\to\infty} \frac{\#\{n\leq m: xh_n\in B_n\}}{\sum_{n\leq m} \mu(B_n)}=1,$$
%so in particular, the set $\{n:xh_n\in B_n\}$ is unbounded for a.e. $x$.
\end{Thm}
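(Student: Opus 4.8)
The plan is to deduce the strong Borel--Cantelli property from a quasi-independence estimate for the indicator functions of the targets, fed into a standard quantitative Borel--Cantelli lemma (the G\'al--Koksma lemma). Fix a sequence $\{B_m\}_{m\in\N}$ of spherical subsets of $\cX$ with $\sum_m\mu(B_m)=\infty$, and for each $j$ set $f_j(x)=\mathbbm{1}_{B_j}(xh_j)$, so that $\int_\cX f_j\,d\mu=\mu(B_j)$ by the $G$-invariance of $\mu$. The G\'al--Koksma lemma asserts that if one can find $C>0$ with
\begin{equation}\label{e:variance}
\int_\cX\Big(\sum_{j=m+1}^n\big(f_j(x)-\mu(B_j)\big)\Big)^2\,d\mu(x)\ \le\ C\sum_{j=m+1}^n\mu(B_j)\qquad\text{for all }0\le m<n,
\end{equation}
then for a.e.\ $x$ one has $\sum_{j\le m}f_j(x)=\Phi_m+O_{x,\epsilon}\big(\Phi_m^{1/2+\epsilon}\big)$, where $\Phi_m=\sum_{j\le m}\mu(B_j)$; since $\Phi_m\to\infty$ this is exactly the sBC relation \eqref{e:sBC}. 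So the whole task reduces to establishing \eqref{e:variance}.

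To prove \eqref{e:variance} I would expand the square. Put $\phi_j=\mathbbm{1}_{B_j}-\mu(B_j)$; this has mean zero, and since $B_j$ is spherical it is right $K$-invariant, so $\phi_j$ is a spherical element of $L^2_0(\G\bk G)$ with $\|\phi_j\|_2^2=\mu(B_j)(1-\mu(B_j))\le\mu(B_j)$. Using that $\mu$ is $h_t$-invariant and that $\{h_m\}$ is a one-parameter group (so $h_j^{-1}h_i=h_{i-j}$), the change of variables $x\mapsto xh_j^{-1}$ identifies the $(i,j)$ cross term with $\<\pi(h_{i-j})\phi_i,\phi_j\>$. The diagonal terms then contribute $\sum_{j=m+1}^n\|\phi_j\|_2^2\le\sum_{j=m+1}^n\mu(B_j)$, while for $i\ne j$ (hence $|i-j|\ge1$) the SD hypothesis with exponent $\eta>1$ gives
$$\big|\<\pi(h_{i-j})\phi_i,\phi_j\>\big|\ \ll_\eta\ \frac{\|\phi_i\|_2\,\|\phi_j\|_2}{|i-j|^\eta}\ \le\ \frac{\mu(B_i)^{1/2}\mu(B_j)^{1/2}}{|i-j|^\eta}\ \le\ \frac{\mu(B_i)+\mu(B_j)}{2\,|i-j|^\eta}.$$
Summing over $i\ne j$ in the window and using symmetry in $i$ and $j$,
$$\sum_{\substack{m<i,j\le n\\ i\ne j}}\big|\<\pi(h_{i-j})\phi_i,\phi_j\>\big|\ \ll_\eta\ \Big(\sum_{j=m+1}^n\mu(B_j)\Big)\sum_{k\ge1}\frac{1}{k^\eta}\ \ll_\eta\ \sum_{j=m+1}^n\mu(B_j),$$
where the crucial point is that $\sum_k k^{-\eta}<\infty$ precisely because $\eta>1$. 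Adding the diagonal bound yields \eqref{e:variance}, and with it the theorem.

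Two structural remarks. First, no monotonicity of the family $\{B_m\}$ is needed here: for SD flows the off-diagonal sum is an honest $O$ of the diagonal, so the full sequence is treated at once --- this is exactly what fails at the borderline rate $|t|^{-(1-\epsilon)}$ of Theorem~\ref{t:edecay}, and is why monotonicity enters the companion results. Second, sphericity of the targets is used only to invoke SD for $\phi_i$ and $\phi_j$; for general targets the decay of matrix coefficients on $L^2_0$ is not uniform (it degrades with the $K$-types, equivalently the Sobolev norms, of the functions), so this route does not directly apply. Consequently there is no substantial obstacle beyond the SD input itself (established elsewhere in the paper); the remaining work is the essentially formal variance computation above, and the only thing to watch is that \eqref{e:variance} hold uniformly over all windows $(m,n]$ --- which it does.
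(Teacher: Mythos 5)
Your proposal is correct and follows essentially the same route as the paper: expand the variance, identify each cross term as $\<\pi(h_{i-j})\phi_i,\phi_j\>$ with $\phi_i=\chi_{B_i}-\mu(B_i)$ spherical in $L^2_0$, apply the SD bound with $\eta>1$ to get a convergent off-diagonal sum, and feed the resulting quasi-independence estimate into the Schmidt/G\'al--Koksma lemma (the paper's Lemma \ref{gbc}, quoted from Sprind\v{z}uk). The only cosmetic difference is that you close the off-diagonal sum via $\sqrt{\mu_i\mu_j}\le\tfrac12(\mu_i+\mu_j)$ where the paper splits by which of $\mu_i,\mu_j$ is larger; these are interchangeable.
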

%\begin{Rem}
%Note that we do not assume here that the sequence $\{B_n\}_{n\in \N}$ is monotone, or even that $\mu(B_n)$ decays monotonously. Using the terminology of \cite{Athreya}, this shows that for an SD flow, any sequence of spherical sets with $\sum_n \mu(B_n)=\infty$ satisfies is strongly Borel-Cantelli, and hence, the collection of spherical sets satisfy the strong Borel-Cantelli property.
%\end{Rem}

Following this result it would be useful to categorize precisely which one-parameter homogenous flows are SD.
For diagonalizable flows, the exponential decay of matrix coefficients clearly imply summable decay.
For unipotent flows, the decay is polynomial and we don't always have summable decay.
When the group $G$ is a simple Lie group of rank $\geq 2$, the following result gives explicit conditions for when a one-parameter flow has summable decay, in terms of the restricted root system of $G$ and the adjoint representation of the generator of the flow (see section \ref{s:pLie} for more details).
\begin{Thm}\label{t:csd}
Assume that $G$ is simple with real rank $\geq 2$. If the restricted root system is of type $B_n (n\geq 4), D_n (n\geq 4), E_6, E_7, E_8$ or $F_4$, then all unbounded one-parameter flows on $\G\bk G$ are SD. When the restricted root system is not of the above types, any unipotent one-parameter flow, $h_t=\exp(tX_0)$, with $ad(X_0)^3\neq 0$ is SD.
%For $G=\SO(d+1,1)$ or $SU(d,1)$ with $n\geq 2$ if smallest Laplace eigenvalue on $L^2_0(\G\bk G/K)$ satisfies $\lambda_1>\frac{2n-1}{4}$,  then all one-parameter flows are SD.
\end{Thm}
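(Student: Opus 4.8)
The plan is to reduce the theorem to a computation of the Cartan projection of the flow, feed this into Oh's uniform decay estimates for spherical matrix coefficients, and then verify the resulting numerical inequality case by case. As a first step, write the real Jordan decomposition $X_0=X_e+X_h+X_n$ into commuting elliptic, hyperbolic, and nilpotent parts. If $X_h\neq0$, then $\mathrm{ad}(X_0)$ has a nonzero real eigenvalue, the Cartan projection of $h_t=\exp(tX_0)$ grows linearly in $|t|$, and the exponential decay of matrix coefficients on $L^2_0(\G\bk G)$ along such a direction (which holds for any semisimple $G$ and needs no spectral gap) makes the flow SD for trivial reasons. If $X_h=0$, then $\exp(tX_e)$ stays in a fixed compact subgroup while $\exp(tX_n)$ carries the unboundedness; since the Cartan projection is coarsely invariant under multiplication by a fixed compact set, the Cartan projections of $\exp(tX_0)$ and of $\exp(tX_n)$ agree up to $O(1)$, so it suffices to treat the case $X_0=E$ nilpotent and nonzero. (Note that if $X_0$ is not nilpotent then $\mathrm{ad}(X_0)$ has nonzero semisimple part, hence $\mathrm{ad}(X_0)^3\neq0$ automatically, which is why the second hypothesis is phrased only for unipotent flows.)

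Next I would compute the Cartan projection of the unipotent flow. By Jacobson--Morozov, embed $E$ in an $\fsl_2$-triple $(E,H,F)$; the element $H$ is $\R$-diagonalizable, so after conjugating we may assume $H$ lies in the fixed maximal $\R$-split Cartan subalgebra and is dominant. Then $\rho(H)$, the value at $H$ of the half-sum $\rho$ of the positive restricted roots counted with multiplicity, equals $\tfrac12\sum_{j\ge1}j\dim\fg^{(j)}$, where $\fg^{(j)}=\{Y\in\fg:[H,Y]=jY\}$. Integrating the triple gives a homomorphism of (a cover of) $\SL_2(\R)$ into $G$; applying it to the decomposition $\mat{1}{t}{0}{1}=k_1(t)\,\mathrm{diag}(\lambda(t),\lambda(t)^{-1})\,k_2(t)$ of $\SL_2(\R)$, with $\lambda(t)\asymp|t|$, yields $\exp(tE)=c_1(t)\exp\big((\log|t|)H\big)c_2(t)$ for some $c_1(t),c_2(t)$ in a fixed compact subgroup of $G$. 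By coarse bi-invariance of the Cartan projection under a fixed compact set, the Cartan projection of $\exp(tE)$ is $(\log|t|)H+O(1)$.

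Since $G$ is simple of real rank $\ge2$ it has property $(T)$, and Oh's effective property $(T)$ \cite{Oh98} supplies an explicit $\delta=\delta(G)>0$ with $|\la\pi(g)\varphi,\psi\ra|\ll\Xi(g)^{\delta}\|\varphi\|_2\|\psi\|_2$ for all $g\in G$ and all $K$-invariant $\varphi,\psi\in L^2_0(\G\bk G)$ (this is the bound behind Theorem~\ref{t:edecay}). Combining this with the previous step and the standard asymptotics of the Harish-Chandra function gives, for every $\epsilon>0$,
\[ |\la\pi(\exp(tX_0))\varphi,\psi\ra|\ll_\epsilon|t|^{-\delta(G)\rho(H)+\epsilon}\,\|\varphi\|_2\|\psi\|_2\qquad(|t|\ge1), \]
so the flow is SD whenever $\delta(G)\rho(H)>1$.

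It then remains to determine for which nilpotent orbits $\delta(G)\rho(H)>1$ holds. The minimal orbit, whose $H$ is read off from the highest restricted root, yields the smallest value of $\rho(H)$ among nonzero nilpotent orbits. A direct computation with the weighted Dynkin diagrams shows that when the restricted root system is $B_n\ (n\ge4)$, $D_n\ (n\ge4)$, $E_6$, $E_7$, $E_8$, or $F_4$, the value $\delta(G)$ furnished by Oh already exceeds $1/\rho(H)$ for the minimal orbit, hence for every nonzero nilpotent; together with the reduction step this proves that all unbounded flows are SD. For the other types the minimal orbit satisfies $\delta(G)\rho(H)\le1$, but one checks that $\mathrm{ad}(X_0)^3\neq0$ is equivalent to $\fg^{(j)}\neq0$ for some $j\ge3$, and that this condition pushes $\rho(H)$ above $1/\delta(G)$ in every remaining case, which gives the second assertion. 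The representation-theoretic input is soft once Oh's estimates and the elementary $\fsl_2$-computation are in hand; the real obstacle is this last bookkeeping step, which requires assembling both the (near-)optimal decay exponents $\delta(G)$ for each Lie type and the values of $\rho(H)$ for the small nilpotent orbits, and verifying that the cut-off falls precisely between the orbits with $\mathrm{ad}(X_0)^3=0$ and those with $\mathrm{ad}(X_0)^3\neq0$. A secondary technical point is to perform the $\fsl_2$-triple construction compatibly with the restricted root space decomposition for non-split forms.
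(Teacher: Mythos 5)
Your overall strategy --- measure the Cartan projection of the flow, feed it into Oh's uniform decay of spherical matrix coefficients, and check a numerical inequality type by type --- has the same skeleton as the paper's, and your reduction steps (real Jordan decomposition, absorbing the elliptic part, and the asymptotic $\mu(\exp(tE))=(\log|t|)H+O(1)$ via a Jacobson--Morozov triple) are correct. The problem is that the decisive step, the case-by-case verification, is not carried out: you assert that ``a direct computation with the weighted Dynkin diagrams shows'' that $\delta(G)\rho(H)>1$ in the listed cases and that the cutoff otherwise falls exactly at $\mathrm{ad}(X_0)^3\neq 0$, and you yourself flag this bookkeeping as ``the real obstacle.'' Since that dichotomy \emph{is} the theorem, the proof is incomplete as written. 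Worse, the criterion you propose to verify is not well adapted to the statement: both $\rho(H)$ (which involves the root multiplicities $\dim\fg_\lambda$) and any uniform exponent $\delta(G)$ with $|\la\sigma(g)v,v\ra|\ll\Xi(g)^{\delta}$ depend on the real form of $G$, not just on the type of the restricted root system, so your inequality would have to be checked real form by real form, and there is no a priori reason it organizes into the clean list $B_n, D_n\ (n\geq 4), E_6, E_7, E_8, F_4$. Moreover, passing from Oh's actual bound to a $\Xi^{\delta}$ bound forces $\delta\leq\inf_{X\in\fa^+}\xi(X)/\rho(X)$, which for real forms with large multiplicities (e.g.\ $\SO(p,q)$ with $p\gg q$) is far smaller than what Oh's estimate gives in the direction of $H$; your bound is therefore genuinely weaker than the one the paper uses, and its sufficiency is not obvious.

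For comparison, the paper never leaves the restricted root system: Proposition \ref{mprop} shows $e^{\lambda_1(X(t))}\asymp|t|^{l}$, where $\lambda_1$ is the highest restricted root and $l$ is the largest integer with $\mathrm{ad}(X_0)^l\neq 0$ (so $l\geq 2$ always by Lemma \ref{l:nilpotnent}, and $l\geq 3$ under the hypothesis of the second clause); Oh's Theorem A is used in its sharp form $|\la\sigma(g)v,v\ra|\ll_\epsilon e^{-(1-\epsilon)\xi(X(t))}$, with $\xi$ the half-sum of a maximal strongly orthogonal system; and the only bookkeeping is the purely combinatorial inequality $\xi\geq\lambda_1$ on $\fa^+$ for the listed types (versus $\xi\geq\tfrac12\lambda_1$ in general, since $\{\lambda_1\}$ is itself a strongly orthogonal system), recorded in Remark \ref{fde}. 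This yields decay exponents $l(1-\epsilon)\geq 2(1-\epsilon)>1$, resp.\ $\tfrac{l}{2}(1-\epsilon)\geq\tfrac32(1-\epsilon)>1$, with no dependence on multiplicities or on the real form. To repair your argument you would either have to carry out the $\delta(G)\rho(H)$ tabulation over all real forms (substantially more work, and not guaranteed to close given the loss in passing to $\Xi^\delta$), or replace $\Xi^\delta$ by Oh's strongly-orthogonal-system bound and compare $\xi$ with the relevant component of $H$ --- at which point the only datum you need from $H$ is $\lambda_1(H)=l$, and you have recovered the paper's proof.
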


\begin{Rem}
For example, on $\G\bk SL_3(\R)$ the unipotent flow given by  $h_t=\left(\begin{smallmatrix} 1 & 2t & 2t^2\\ 0 & 1& 2t\\ 0 &0 &1\end{smallmatrix}\right)$ is SD, while the flow given by $h_t=\left(\begin{smallmatrix} 1 & 0 &t\\ 0 & 1 & 0\\ 0 &0 &1\end{smallmatrix}\right)$ is not SD.
\end{Rem}

\begin{Rem}
For some applications it is useful to know the precise rate of decay for matrix coefficients along
unipotent flows, and we remark that
our method gives the following very explicit rate: When $h_t = \exp(tX_0)$ is unipotent, let $l$ be the largest positive integer such that $\textrm{ad}(X_0)^l\neq 0$. Then our method shows that for any spherical  $\vf,\psi\in L^2_0(\G\bk G)$,
the matrix coefficients $\<\pi(h_t)\vf,\psi\>$ are bounded by
$O_{\epsilon}\left(\frac{\|\vf\|_2\|\psi\|_2}{|t|^{l(1-\epsilon)}}\right)$ when the restricted root system is $B_n (n\geq 4), D_n (n\geq 4), E_6, E_7, E_8$ or $F_4$ and by $O_{\epsilon}\left(\frac{\|\vf\|_2\|\psi\|_2}{|t|^{l/2(1-\epsilon)}}\right)$ otherwise.
\end{Rem}

\begin{Rem}
We can also characterize SD flows when $G$ is semisimple with property (T). In such a case the flow will be SD unless the flow is essentially trivial except in one of the factors, with the restriction of the flow to this factor not SD.
\end{Rem}

%For SD flows we can show the stronger result implying that the bound coming from Borel-Cantelli is sharp.
%\begin{Thm}\label{t:SBC}
%Let $\cX=\G\bk G$ and let $\{h_t\}_{t\in \R}$ denote a one-parameter SD flow.
%Then the collection of spherical subsets of $\cX$ is strongly Borel-Cantelli.
%%Then for any sequence of spherical measurable sets $\{B_n\}_{n\in \N}$  in $\G\bk G/K$ if $\sum_n \mu(B_n)=\infty$ then for a.e. $x\in \cX$
%%$$\lim_{m\to\infty} \frac{\#\{n\leq m: xh_n\in B_n\}}{\sum_{n\leq m} \mu(B_n)}=1,$$
%%so in particular, the set $\{n:xh_n\in B_n\}$ is unbounded for a.e. $x$.
%\end{Thm}
%\begin{Rem}
%Note that we do not assume here that the sequence $\{B_n\}_{n\in \N}$ is monotone, or even that $\mu(B_n)$ decays monotonously. Using the terminology of \cite{Athreya}, this shows that for an SD flow, any sequence of spherical sets with $\sum_n \mu(B_n)=\infty$ satisfies is strongly Borel-Cantelli, and hence, the collection of spherical sets satisfy the strong Borel-Cantelli property.
%\end{Rem}

\subsection{Groups of real rank one}
For a group of real rank one with property $(T)$, every discrete time homogenous flow is SD, and hence the collection of spherical sets is sBC. For rank one groups without property $(T)$, the question if all one-parameter flows on $\G\bk G$ are SD, depends on the spectral gap of $\G$, that is, the size of the smallest non-trivial eigenvalue of the Laplacian on $L^2(\G\bk \cH)$. When the spectral gap is sufficiently large all flows are SD (see Corollary \ref{uod} below). However, there are also examples with a small spectral gap, for which unipotent flows may not be SD. Nevertheless, using a spectral decomposition it is still possible to prove that the collection of spherical sets is monotone Borell-Cantelli. This was done in \cite{Kelmer17b} for $G=\SO_0(d+1,1)$ with $d\geq 2$ and the proof is similar for $G$ locally isomorphic to $\SU(d,1)$. Explicitly, for these groups we have the following.
%\begin{Thm}
%\label{t:MBC}Let  $G=\SO(d+1,1)$ or $G=SU(d,1)$ with $n\geq 2$, and let $\{B_m\}_{m\in \N}$ denote a monotone family of spherical shrinking targets with $\sum_m \mu(B_m)=\infty$. If the sequence $\{m\mu(B_m)\}_{m\in \N}$ is bounded then for a.e. $x\in \cX$
%$$\lim_{m\to\infty} \frac{\#\{1\leq i\leq m: xh_i\in B_i\}}{\sum_{1\leq i\leq m} \mu(B_i)}=1,$$
%and otherwise there is a subsequence $m_j$ such that for a.e. $x\in \cX$
%$$\lim_{j\to\infty}\frac{\#\{1\leq i\leq m_j: xh_i\in B_{m_j}\}}{m_j \mu(B_{m_j})}=1.$$
%In particular, in both cases, for a.e. $x\in \cX$ the set $\{m\in\N: xh_m\in B_m\}$ is unbounded, so the sequence of targets is BC for this flow.
%\end{Thm}
%\begin{Rem}
%We only need the monotonicity condition for the second case. Explicitly, it follows that any sequence of spherical sets, $\{B_m\}_{m\in \N}$, with $\left\{m\mu(B_m)\right\}_{m\in\mathbb{N}}$ bounded and $\sum_m\mu(B_m)=\infty$, is strongly Borel-Cantelli.
%\end{Rem}
\begin{Thm}\label{t:MBC}
Let $G$ be locally isomorphic to $\SO(d+1,1)$ or $\SU(d,1)$ with $d\geq 2$. Then for any unbounded discrete time one-parameter flow on $\Gamma\backslash G$, the collection of spherical subsets is monotone Borel-Cantelli.
% and let $\{B_m\}_{m\in \N}$ denote a monotone family of spherical shrinking targets with $\sum_m \mu(B_m)=\infty$. If the sequence $\{m\mu(B_m)\}_{m\in \N}$ is bounded then for a.e. $x\in \cX$
%$$\lim_{m\to\infty} \frac{\#\{1\leq i\leq m: xh_i\in B_i\}}{\sum_{1\leq i\leq m} \mu(B_i)}=1,$$
%and otherwise there is a subsequence $m_j$ such that for a.e. $x\in \cX$
%$$\lim_{j\to\infty}\frac{\#\{1\leq i\leq m_j: xh_i\in B_{m_j}\}}{m_j \mu(B_{m_j})}=1.$$
%In particular, in both cases, for a.e. $x\in \cX$ the set $\{m\in\N: xh_m\in B_m\}$ is unbounded, so the sequence of targets is BC for this flow.
\end{Thm}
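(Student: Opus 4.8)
The plan is to reduce Theorem \ref{t:MBC} to a quantitative shrinking-target statement that can be proved via a spectral decomposition of $L^2(\G\bk\cH)$, following the strategy introduced in \cite{Kelmer17b} for $G=\SO_0(d+1,1)$. For a group $G$ locally isomorphic to $\SO(d+1,1)$ or $\SU(d,1)$, the lattice $\G$ may have a small spectral gap, so there is no uniform polynomial rate of decay of spherical matrix coefficients valid on all of $L^2_0(\G\bk G)$; this is exactly why Theorems \ref{t:SBC} and \ref{t:csd} do not apply. However, the failure is confined to finitely many exceptional eigenvalues of the Laplacian (the ones below the threshold where the corresponding complementary-series representations have sufficiently fast decay). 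The idea is therefore to split a test function $\vf\in L^2_0(\G\bk\cH)$ as $\vf=\vf_{\mathrm{exc}}+\vf_{\mathrm{temp}}$, where $\vf_{\mathrm{exc}}$ is the projection onto the span of the finitely many exceptional eigenfunctions and $\vf_{\mathrm{temp}}$ lies in a subspace on which matrix coefficients of the flow do decay summably (i.e. the restriction of the flow to that complementary subspace is effectively SD).

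The main steps would be as follows. First, I would set up a quantitative criterion — a dynamical Borel--Cantelli lemma — showing that to prove the monotone BC property for a monotone family $\{B_t\}$ with $\sum_m\mu(B_m)=\infty$ it suffices to control the variance $\sum_{j,k\le m}\big(\mu(B_j\cap B_k h_{j-k}^{-1})-\mu(B_j)\mu(B_k)\big)$ and apply a Borel--Cantelli / Gal--Koksma type argument (as in \cite{ChernovKleinbock01, Kelmer17b}); the covariance term is estimated by $\langle\pi(h_{j-k})\mathbbm{1}_{B_k},\mathbbm{1}_{B_j}\rangle-\mu(B_j)\mu(B_k)$, which is a spherical matrix coefficient of the mean-zero parts of the indicator functions. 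Second, for the tempered part $\vf_{\mathrm{temp}}$ one invokes the quantitative decay of matrix coefficients of $h_t$ on representations appearing with parameter above the exceptional range — here the rank-one structure makes the spherical matrix coefficients of unipotent flows decay like a negative power of $|t|$ with exponent that can be taken summable on this subspace, exactly as in the SD case, so this contribution is handled by the same argument as Theorem \ref{t:SBC}. Third, and this is the crux, one must handle the exceptional part $\vf_{\mathrm{exc}}$: since this lives in a finite-dimensional space of eigenfunctions, the matrix coefficient $\langle\pi(h_t)\vf_{\mathrm{exc}},\psi_{\mathrm{exc}}\rangle$ decays only slowly (non-summably), but one uses instead the monotonicity of the family $\{B_t\}$ together with a direct estimate: the projection of $\mathbbm{1}_{B_t}$ onto each fixed exceptional eigenfunction $\phi_i$ is $\int_{B_t}\phi_i\,d\mu$, and a geometric estimate — using that the $\phi_i$ are bounded near the cusp by the corresponding complementary-series asymptotics, or more precisely controlling $\|\phi_i\cdot\mathbbm{1}_{B_t}\|$ — shows this inner product is small relative to $\mu(B_t)$ as $t\to\infty$, so the exceptional part contributes a lower-order error to the variance sum. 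Summing over the finitely many exceptional eigenfunctions keeps the error under control.

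The hard part will be the third step: getting genuine quantitative control on $\int_{B_t}\phi_i\,d\mu$ and on the mixed covariance terms involving one exceptional and one tempered piece, uniformly in $t$, so that the total variance is $o\big((\sum_{j\le m}\mu(B_j))^2\big)$ along the sequence — this is where monotonicity of the family is essential (it lets one pass to a lacunary subsequence $B_{2^n}$ and compare, absorbing the slowly-decaying exceptional contributions into a telescoping/summation-by-parts argument, precisely the device used in \cite{Kelmer17b}). One also has to verify that the spectral decomposition of $L^2(\G\bk\cH)$ into discrete (eigenfunction) and continuous (Eisenstein) parts behaves well under the projections above and that the Eisenstein contribution is tempered and hence falls into the SD regime; for $\SU(d,1)$ this requires citing the appropriate spectral theory and decay bounds, which parallel the $\SO(d+1,1)$ case treated in \cite{Kelmer17b} with only notational changes. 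Once the variance bound is in hand, the monotone Borel--Cantelli conclusion follows from the standard Gal--Koksma lemma, completing the proof.
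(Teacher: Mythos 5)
Your overall frame (spectral decomposition into a tempered part, which behaves like the SD case and is handled by the Schmidt/Gal--Koksma quasi-independence lemma, plus finitely many exceptional forms controlled through their $L^p$ integrability) matches the paper's, but your third step contains a genuine gap. The claim that $\int_{B_t}\varphi_k\,d\mu$ is ``small relative to $\mu(B_t)$'' is false: the $L^p$ bounds on exceptional forms (Proposition \ref{nE}) give, via H\"older, $|\langle\chi_{B_t},\varphi_k\rangle|\ll\mu(B_t)^{\theta_k}$ with $\tfrac12<\theta_k<1$, which is \emph{larger} than $\mu(B_t)$. Combined with the only available decay $|\langle\pi(h_{i-j})\varphi_k,\varphi_k\rangle|\ll|i-j|^{-\kappa(\rho-s_k)(1-\epsilon)}$, whose exponent can be $<1$ when the spectral gap is small, the exceptional contribution to the variance is \emph{not} lower order: for a slowly shrinking monotone family (e.g.\ $\mu(B_m)\asymp 1/\log m$) the double sum $\sum_{i\neq j\le m}(\mu_i\mu_j)^{\theta_k}|i-j|^{-\eta_k}$ is of size roughly $m^{1-\eta_k}E_m$, which is neither $O(E_m)$ nor $o(E_m^2)$, so both the strong and the weak variance criteria you invoke fail. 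No amount of passing to $B_{2^n}$ and summing by parts repairs this, because the obstruction is the size of $E_m$ relative to $m$, not the bookkeeping.

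What the paper actually does is a dichotomy you do not identify. If $\{m\mu(B_m)\}$ is \emph{bounded}, then $\mu_j\ll 1/j$ supplies exactly the extra decay needed: writing $(\mu_i\mu_j)^{1/q_k}\le\mu_i\,\mu_j^{2/q_k-1}\ll\mu_i\,j^{-(2/q_k-1)}$ and checking $\tfrac{2}{q_k}-1+\eta_k>1$ makes the exceptional double sum $O(\sum\mu_i)$, and Lemma \ref{gbc} gives sBC (Proposition \ref{p:bounded}). If $\{m\mu(B_m)\}$ is \emph{unbounded}, the variance approach is abandoned entirely: one applies the effective mean ergodic theorem (Proposition \ref{ewot}) to $f_m=\chi_{B_m}/\mu(B_m)$ to get $\|\beta_{m_j}^+(f_{m_j})-1\|_2\to0$ along a subsequence with $m_j\mu(B_{m_j})\to\infty$, hence a.e.\ convergence along a further subsequence; monotonicity is then used only to convert ``the orbit hits the fixed set $B_{m_j}$ at some time $i\le m_j$'' into ``$xh_i\in B_i$,'' which yields BC. Your proposal would need to be restructured around this (or an equivalent) case split to close.
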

\begin{Rem}
In fact, our proof gives something stronger. For a sequence of spherical sets, $\{B_m\}_{m\in \N}$ (not necessarily monotone) with $\sum_m\mu(B_m)=\infty$, if we assume that $\left\{m\mu(B_m)\right\}_{m\in\mathbb{N}}$ is bounded then we can show this sequence is strongly Borel-Cantelli.
If the sequence is monotone and $\left\{m\mu(B_m)\right\}_{m\in\mathbb{N}}$ is unbounded, we can show instead that there is a subsequence $\{m_j\}$ such that for a.e. $x\in \cX$
$$\lim_{j\to\infty}\frac{\#\{1\leq i\leq m_j: xh_i\in B_{m_j}\}}{m_j \mu(B_{m_j})}=1.$$
\end{Rem}
\begin{Rem}
The case when $G$ is locally isomorphic to $\SO(2,1)%\cong \SU(1,1)\cong\SL_2(\R)
$ was considered in \cite{Kelmer17b}. In this case one needs some additional assumptions on the shrinking rate of family $\{B_m\}_{m\in \N}$ to show that it is BC for a unipotent flow.
\end{Rem}

\subsection{Orbits eventually always hitting}
Assume now that our flow is either SD or that $G$ is of real rank one, so that any monotone sequence of spherical shrinking targets $\{B_m\}_{m\in \N}$ is BC. In such cases we wish to study the subtler point, of whether the finite orbits
\begin{equation}\label{e:Hm}
xH_m^+=\{xh_j: 1\leq j\leq m\},
\end{equation}
eventually always hit or miss the targets $B_m$.
Using the terminology introduced in \cite{Kelmer17b} we say that an orbit of a point $x\in \cX$ is {\em{eventually always hitting}}  if $xH_m^+\cap B_m\neq\emptyset$ for all sufficiently large $m$, and {\em{eventually always missing}} if
$xH_m^+\cap B_m=\emptyset$ for all sufficiently large $m$. We
denote by $\cA_{\rm ah}$ and $\cA_{\rm am}$ the set of points with such orbits respectively.
For the eventually always missing set, we have the dichotomy given by the dynamical Borel-Cantelli Lemma: if $\sum_{m=1}^{\infty}\mu(B_m)<\infty$ then $\cA_{\rm am}$ is of full measure and otherwise $\cA_{\rm am}$ is a null set.

The eventually always hitting set, $\cA_{\rm ah}$, is also either a null set or a set of full measure, but in this case we are not able to establish such an explicit dichotomy.
Here we have the following partial result (extending the result of  \cite{Kelmer17b} dealing with the case of $G=\SO_0(d+1,1)$).
% the first author considered this problem for the case of $G=\SO(d+1,1)$ and obtained the following partial result: In one direction,
%if there is $c<1$ such that the set
%$\{m: m\mu(B_m)\leq c\}$ is unbounded then $\cA_{\rm ah}$ is a null set, and on the other hand
%if
%\begin{equation}\label{e:summable}
%\sum_{j=0}^\infty \frac{1}{2^{j}\mu(B_{2^j})}<\infty,
%\end{equation}
%then $\cA_{\rm ah}$ has full measure.
%The first part is very general and holds for any discrete time flow. We show here that the second part also holds in this more general setting. Explicitly, we show
\begin{Thm}\label{t:ae}
Fix an unbounded discrete time one-parameter flow on $\G\bk G$ and assume that either the flow is SD, or that $G$ is of real rank one, not locally isomorphic to $\SO(2,1)$.
Let $\{B_m\}_{m\in \N}$ denote a monotone sequence of spherical shrinking targets.
If \begin{equation}\label{e:summable}
\sum_{j=0}^\infty \frac{1}{2^{j}\mu(B_{2^j})}<\infty,
\end{equation}
Then $\cA_{\rm ah}$ is of full measure. Moreover, if we further assume that
$\mu(B_{2m})\asymp \mu(B_m)$, then for a.e. $x\in \cX$, %there is a constant $C>1$ such that
for all sufficiently large $m$
$$\#\{1\leq j\leq m: xh_j\in B_m\}\asymp m\mu(B_m).$$
%$$C^{-1}m\mu(B_m)\leq \#\{xH_m\cap B_m\}\leq C  m\mu(B_m)$$
\end{Thm}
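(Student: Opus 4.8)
The plan is to reduce the statement to a second-moment concentration estimate for an orbit-counting function along a dyadic sequence of times, fed by the decay of spherical matrix coefficients of the flow. For $k\in\N$ put $\Phi_k(x)=\#\{1\le j\le 2^k:xh_j\in B_{2^{k+1}}\}$ and $\Psi_k(x)=\#\{1\le j\le 2^{k+1}:xh_j\in B_{2^k}\}$, and write $A_k=2^k\mu(B_{2^{k+1}})$ and $A_k'=2^{k+1}\mu(B_{2^k})$ for the respective means $\int_\cX\Phi_k\,d\mu$ and $\int_\cX\Psi_k\,d\mu$. The first step is a purely set-theoretic reduction: if $2^k\le m<2^{k+1}$, then monotonicity of $\{B_t\}$ gives $B_{2^{k+1}}\subseteq B_m\subseteq B_{2^k}$ while $xH^+_{2^k}\subseteq xH^+_m\subseteq xH^+_{2^{k+1}}$, so
\begin{equation*}
\Phi_k(x)\le\#\{1\le j\le m:xh_j\in B_m\}\le\Psi_k(x),
\end{equation*}
and in particular $\Phi_k(x)>0$ already forces $xH^+_m\cap B_m\neq\emptyset$. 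Since \eqref{e:summable} forces $2^j\mu(B_{2^j})\to\infty$, hence $A_k\to\infty$, the theorem follows once we show that for a.e.\ $x$ and all large $k$ one has $\Phi_k(x)>0$ (for the first assertion), and $\Phi_k(x)\ge\tfrac12 A_k$ together with $\Psi_k(x)\le 2A_k'$ (for the second assertion, then using $\mu(B_{2m})\asymp\mu(B_m)$, which gives $\mu(B_m)\asymp\mu(B_{2^k})\asymp\mu(B_{2^{k+1}})$ and $m\asymp 2^k$ throughout each block $[2^k,2^{k+1})$).

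The heart of the matter is a variance estimate. Let $\bar f_k=\mathbbm{1}_{B_{2^{k+1}}}-\mu(B_{2^{k+1}})\in L^2_0(\G\bk G)$, which is spherical; expanding the square and using $G$-invariance of $\mu$ gives
\begin{equation*}
\int_\cX(\Phi_k-A_k)^2\,d\mu=\sum_{|n|<2^k}(2^k-|n|)\,\la\pi(h_n)\bar f_k,\bar f_k\ra\le 2^k\Big(\|\bar f_k\|_2^2+2\sum_{n\ge1}|\la\pi(h_n)\bar f_k,\bar f_k\ra|\Big),
\end{equation*}
together with $\|\bar f_k\|_2^2\le\mu(B_{2^{k+1}})$. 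If the flow is SD with exponent $\eta>1$, then the sum over $n$ is $\ll\|\bar f_k\|_2^2\sum_{n\ge1}n^{-\eta}\ll\mu(B_{2^{k+1}})$, so the variance is $\ll A_k$, and likewise $\ll A_k'$ for $\Psi_k$. Chebyshev's inequality then gives $\mu(\Phi_k<\tfrac12 A_k)\ll A_k^{-1}$ and $\mu(\Psi_k>2A_k')\ll (A_k')^{-1}$; since $\sum_k A_k^{-1}=2\sum_{j\ge1}(2^j\mu(B_{2^j}))^{-1}<\infty$ and $\sum_k (A_k')^{-1}\le\sum_k(2^k\mu(B_{2^k}))^{-1}<\infty$ by \eqref{e:summable}, Borel--Cantelli yields the required estimates for a.e.\ $x$ and all large $k$, and with the reduction above this proves the SD case. (For the first assertion alone only $\Phi_k(x)>0$, hence only \eqref{e:summable}, is used, and not $\mu(B_{2m})\asymp\mu(B_m)$.)

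It remains to treat the rank one case; since rank one groups with property $(T)$ give SD flows, the only new situation is $G$ locally isomorphic to $\SO(d+1,1)$ or $\SU(d,1)$ with $d\ge2$, where a unipotent flow may fail to be SD when the spectral gap of $\G$ is small. Here one replaces the crude estimate for $\sum_n|\la\pi(h_n)\bar f_k,\bar f_k\ra|$ by the spectral decomposition $\bar f_k=g_k+e_k$ used in the proof of Theorem~\ref{t:MBC}, with $g_k$ in the tempered part of $L^2_0(\G\bk\cH)$ and $e_k=\sum_{i=1}^{r}c^{(k)}_i u_i$ its projection onto the finite-dimensional space spanned by an orthonormal system $u_1,\dots,u_r$ of Laplace eigenfunctions with eigenvalue below the tempered threshold; the terms with $i\neq j$ in $\la\pi(h_n)e_k,e_k\ra$ vanish since the $u_i$ are spherical vectors generating mutually orthogonal $G$-subrepresentations. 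On the tempered part the spherical matrix coefficients of the flow decay like $|n|^{-d+\epsilon}$ (respectively at an analogous rate for $\SU(d,1)$), uniformly in the spectral parameter, which is summable precisely because $d\ge2$; hence the tempered contribution to the variance of $\Phi_k$ is $\ll 2^k\|g_k\|_2^2\le A_k$. For the exceptional part, the crucial point is that each $u_i$ is bounded (below the tempered threshold the spectrum is cuspidal), so $|c^{(k)}_i|=\big|\int_{B_{2^{k+1}}}\overline{u_i}\,d\mu\big|\ll\mu(B_{2^{k+1}})$, while the spherical matrix coefficient of the complementary-series representation generated by $u_i$ decays like $|n|^{-\delta_i}$ with $\delta_i>0$ fixed by the spectral gap; thus the exceptional contribution to the variance is $\ll 2^k\sum_i|c^{(k)}_i|^2(2^k)^{1-\delta}\ll(2^k\mu(B_{2^{k+1}}))^2\,2^{-k\delta}=A_k^2\,2^{-k\delta}$ for some fixed $\delta>0$. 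Altogether the variance of $\Phi_k$ is $\ll A_k+2^{-k\delta}A_k^2$, so Chebyshev gives $\mu(\Phi_k<\tfrac12 A_k)\ll A_k^{-1}+2^{-k\delta}$, which is summable in $k$; the same applies to $\Psi_k$, and one concludes as in the SD case.

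The step I expect to be the main obstacle is precisely this treatment of the finitely many exceptional complementary-series constituents: one needs both that their eigenfunctions are bounded, so that the coefficients $c^{(k)}_i$ of $\mathbbm{1}_{B_{2^{k+1}}}$ are $O(\mu(B_{2^{k+1}}))$ rather than merely $O(\mu(B_{2^{k+1}})^{1/2})$ — which is what makes the exceptional variance negligible against $A_k^2$ — and that, for the fixed lattice $\G$, these matrix coefficients still decay at a fixed positive polynomial rate along the flow. Both of these, and the summability of the tempered decay, fail for $\SO(2,1)$, where the tempered rate is only $|n|^{-1}$ and the complementary-series rate can be arbitrarily slow; this is why that case is excluded, and there, as in \cite{Kelmer17b}, one must impose extra restrictions on the shrinking rate of $\{B_m\}$. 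The remaining inputs — the uniform summable decay on the tempered spectrum and the validity of the spectral decomposition — are exactly as in the proof of Theorem~\ref{t:MBC}.
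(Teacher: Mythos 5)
Your overall architecture is the same as the paper's: a dyadic sandwich reducing the statement to controlling the counts at times $2^k$ against targets $B_{2^{k\pm1}}$ (this is exactly Lemma \ref{l:EventuallyHitting}, quoted from \cite{Kelmer17b}, which you re-derive correctly), followed by a second-moment/Chebyshev/Borel--Cantelli argument, which in the paper is packaged as the effective mean ergodic theorem (Propositions \ref{pwt}, \ref{ewot}) and the variance estimate of Proposition \ref{est}. Your treatment of the SD case is correct and matches the paper.

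There is, however, a genuine gap in your treatment of the rank-one case without property $(T)$. You assert that every spherical eigenfunction below the tempered threshold is a cusp form and hence bounded, and you use this to get $|c_i^{(k)}|=\bigl|\int_{B_{2^{k+1}}}\overline{u_i}\,d\mu\bigr|\ll\mu(B_{2^{k+1}})$. This is false in general: for a non-cocompact lattice the exceptional spectrum also contains residual forms (residues of Eisenstein series at poles in $(0,\rho)$), which are not cuspidal and can blow up at the cusps, so they are not in $L^\infty$. The correct substitute, and the reason the paper needs Proposition \ref{nE}, is that an exceptional form $\varphi_k\in\pi_{s_k}$ lies only in $L^p$ for $p<\frac{2\rho}{\rho-s_k}$; H\"older then gives merely $|\langle\chi_B,\varphi_k\rangle|\ll_\epsilon\mu(B)^{1/q_k}$ with $\tfrac12<\tfrac1{q_k}<1$, not the first power of $\mu(B)$. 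With this weaker bound your exceptional contribution to the variance becomes $2^k\mu(B_{2^{k+1}})^{2/q_k}(2^k)^{1-\kappa(\rho-s_k)(1-\epsilon)}$ rather than $A_k^2 2^{-k\delta}$, and one must redo the exponent bookkeeping — this is exactly the computation in part (3) of Proposition \ref{est}, which uses $\kappa\rho\ge2$ (i.e. $d\ge2$) and the a priori inequality $2^k\mu(B_{2^k})>1$ (available for large $k$ thanks to \eqref{e:summable}) to show the resulting error $\bigl(2^k\mu(B_{2^k})^{2/3}\bigr)^{-2\tau(\G)/3}\ll 2^{-2k/9}$ is still summable. So the conclusion survives, but not by the route you describe: the step you yourself flagged as the main obstacle is precisely where the argument as written breaks down, and repairing it requires the $L^p$ bounds on residual forms rather than boundedness.
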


\begin{Rem}
In the other direction, it was shown in  \cite{Kelmer17b} that for any ergodic one-parameter flow, for any monotone sequence, $\{B_m\}_{m\in \N}$, of shrinking targets, if there is $c<1$ such that the set
$\{m: m\mu(B_m)\leq c\}$ is unbounded then $\cA_{\rm ah}$ is a null set.
In particular, if we assume that $\mu(B_m)$ decays polynomially in the sense that $\mu(B_m)\asymp m^{-\eta}$ for some fixed $\eta$,
%$$C_1m^{-\eta}\leq \mu(B_m)\leq C_2m^{-\eta},$$
then Theorem \ref{t:ae}, implies that $\cA_{\rm ah}$ is a set of full measure when $\eta<1$, and a null set when $\eta>1$.
In this case, however, the same result already follows from Theorem \ref{t:hittingtime} and hence holds also for flows that are not SD.
\end{Rem}
\begin{Rem}
Finally we remark that in a recent work of Kleinbock and Wadleigh \cite{KleinbockWadleigh2017}, they give examples of dynamical systems where they manage to get an explicit dichotomy for $\cA_{\rm ah}$, and the dichotomy depends exactly on the convergence and divergence of the series of the type $(\ref{e:summable})$.
\end{Rem}

% \subsection{Dynamical Borell Cantelli}
% In analogy to the Borel-Cantelli lemma, we say that a sequence $\{B_n\}_{n\in \N}$ is Borell-Cantelli (BC) if
% $\sum_n\mu(B_n)=\infty$ and for a.e. $x\in \cX$ the set $\{n: xh_n\in B_n\}$ is unbounded, and that it is Strong Borel Cantelli (SBC) if for a.e. $x\in \cX$
% $$\lim_{n\to\infty} \frac{\#\{m\leq n: xh_m\in B_m\}}{\sum_{m\leq n} \mu(B_m)}=1.$$
%
% \begin{Thm}
% For any one-parameter flow on  $\cX=\G\bk G$ that is SD,
% any sequence of spherical sets with $\sum_m \mu(B_m)=\infty$ is SBC
% \end{Thm}
%
% \begin{Thm}
%Let $G\not\cong \SL_2(\R)$ be of real rank one , $\G\leq G$ a lattice, and fix a one-parameter flow on $\G\bk G$. Any sequence of spherical sets with $\sum_m \mu(B_m)=\infty$ is BC, and if we further assume $m\mu(B_m)$ is bounded then it is SBC.  \end{Thm}
%
\subsection*{Acknowledgements}
The authors would like to thank Dmitry Kleinbock, Farrell Brumley and Jayadev S. Athreya for helpful conversations.

\section{Shrinking target problems}
We start by taking a closer look on shrinking target problems for a general ergodic $\mathbb{Z}$-action given by the action of a group, $H=\{h_m\}_{m\in\bbZ}$, on a probability space $(\mathcal{X},\mu)$. Though we will later apply these results for the case of $\cX=\G\bk G$, in this section we will not assume anything about the space $\cX$ and the flow other than ergodicity.

\subsection{The hitting time problem}
Fix a positive integer $i$ throughout this subsection. Instead of the  first hitting time function given in \eqref{e:hitting}, we consider the more general $i$th hitting time function:
For any $x\in \mathcal{X}$ and any set $B\subset \mathcal{X}$ let
\begin{equation}\label{e:ihitting}
\tau^{i}_{B}(x):=\min\{m\in\mathbb{N}\ |\ |xH_m^+\cap B|=i\}\end{equation}
measure the time needed for the orbit of $x$ to enter the target $B$ for the $i$th time (here $xH_m^+$ is as in \eqref{e:Hm}). In preparation for Theorem \ref{t:hittingtime}, our goal in this section is to give sufficient conditions on a monotone family of shrinking targets, $\{B_t\}_{t> 0}$, implying that for a.e. $x\in \cX$
\begin{equation}\label{e:limtaui}
\lim\limits_{t\to \infty}\frac{\log \tau^{i}_{B_t}(x)}{-\log\mu(B_t)}=1.
\end{equation}

%\begin{equation}\label{upper.bound}
%\limsup\limits_{t\to\infty}\frac{\log \tau^{i}_{B_t}(x)}{-\log\mu(B_t)}\leq 1
%\end{equation}
%and
%\begin{equation}\label{lower.bound}
%\liminf\limits_{t\to \infty}\frac{\log \tau^{i}_{B_t}(x)}{-\log\mu(B_t)}\geq 1
%\end{equation}
For any $0<\delta<\frac12$ we define the sets
$$\mathcal{L}_{\delta}^i:=\{x\in\mathcal{X}\ |\ \liminf_{t\to\infty}\frac{\log \tau_{B_t}^i(x)}{-\log \mu(B_t)}\leq 1-2\delta\},$$
$$\mathcal{U}_{\delta}^{i}:=\{x\in\mathcal{X}\ |\ \limsup_{t\to\infty}\frac{\log \tau_{B_t}^i(x)}{-\log \mu(B_t)}\geq 1+2\delta\},$$
and note that the condition $\mu(\mathcal{U}_{\delta}^i)=0$ for all $0<\delta<\frac12$ implies that for a.e. $x\in \cX$
\begin{equation}\label{upper.bound}
\limsup\limits_{t\to\infty}\frac{\log \tau^{i}_{B_t}(x)}{-\log\mu(B_t)}\leq 1
\end{equation}
and similarly the condition that $\mu(\mathcal{L}_{\delta}^i)=0$ for all $0<\delta<\frac12$ implies that for a.e. $x\in \cX$
\begin{equation}\label{lower.bound}
\liminf\limits_{t\to \infty}\frac{\log \tau^{i}_{B_t}(x)}{-\log\mu(B_t)}\geq 1.
\end{equation}

%, it suffices to show $\mu(\mathcal{L}_{\delta}^i)=\mu(\mathcal{U}_{\delta}^{i})=0$ for any $0<\delta<\frac12$.
% since this would imply
%\begin{equation}\label{lre}
%\mu\left(\left\{x\in\mathcal{X}\ |\ \liminf\limits_{t\to\infty}\frac{\log \tau^i_{B_t}(x)}{-\log\mu(B_t)}< 1\right\}\right)=\lim\limits_{\delta\to 0^+}\mu(\mathcal{L}^i_{\delta})=0
%\end{equation}
%and
%\begin{equation}\label{ure}
%\mu\left(\left\{x\in\mathcal{X}\ |\ \limsup\limits_{t\to\infty}\frac{\log \tau^i_{B_t}(x)}{-\log\mu(B_t)}> 1\right\}\right)=\lim\limits_{\delta\to 0^+}\mu(\mathcal{U}^i_{\delta})=0.
%\end{equation}

Now, for any integer $m\geq 1$ and measurable subset $B\subset \mathcal{X}$, define the hitting set
\begin{equation}\label{e:hitting1}\mathcal{H}_{m,B}^i:=\left\{x\in\mathcal{X}\ |\ \tau_B^i(x)\leq m \right\}
\end{equation}
and its complement
\begin{equation}\label{e:missing}\mathcal{M}^i_{m,B}:=\left\{x\in \mathcal{X}\ |\ \tau_B^i(x)> m\right\}.\end{equation}
Note that %$\mathcal{H}^i_{m,B}$ and $\mathcal{M}^i_{m,B}$ are complement to each other and
$x\in \mathcal{H}^i_{m,B}$ (resp. $x\in\mathcal{M}^i_{m,B}$) means the first $m$ steps in the orbit of $x$ hit the set $B$ at least (resp. strictly less than) $i$ times. For any $0<\delta<\frac12$ and any $t>0$ let
$$m^{\pm}_{\delta}(t)=\floor{\frac{1}{\mu(B_t)^{(1\pm\delta)}}}.$$
If $x\in \mathcal{L}_{\delta}^i$, then there exists an unbounded sequence of $t$ such that $\frac{\log\tau_{B_t}^i(x)}{-\log\mu(B_t)}< 1-\delta$, or equivalently, $\tau_{B_t}^i(x)<\frac{1}{\mu(B_t)^{(1-\delta)}}$. Since $\tau_{B_t}^i(x)$ is integer-valued, this implies that $\tau_{B_t}^i(x)\leq \floor{\frac{1}{\mu(B_t)^{(1-\delta)}}}$ for unbounded values of $t$. Hence, $x\in \mathcal{L}_{\delta}^i$ implies that $x\in \mathcal{H}_{m^{-}_{\delta}(t),B_t}^i$ for unbounded values of $t$. Similarly, $x\in \mathcal{U}_{\delta}^i$ implies that $x\in \mathcal{M}^i_{m^{+}_{\delta}(t), B_t}$ for unbounded values of $t$. Let $\mathcal{N}$ denote the set of integers $\ell\geq 0$ such that $\{\mu(B_t)\ |\ t> 0\}\cap [\frac{1}{2^{\ell}}, \frac{1}{2^{\ell+1}})$ is nonempty.
%there exists $t>0$ with $\mu(B_t)\in[\frac{1}{2^{\ell+1}},\frac{1}{2^{\ell}})$ and note
Note that $\mathcal{N}$ is unbounded since $\lim\limits_{t\to\infty}\mu(B_t)= 0$. Thus
$$\mathcal{L}_{\delta}^i\subset \bigcap_{m=0}^{\infty}\bigcup_{\substack{\ell\geq m\\\ell\in\mathcal{N}}}\bigcup_{\frac{1}{2^{\ell+1}}\leq \mu(B_t)< \frac{1}{2^{\ell}}}\mathcal{H}_{m^-_{\delta}(t),B_t}^i$$
and
$$\mathcal{U}_{\delta}^i\subset \bigcap_{m=0}^{\infty}\bigcup_{\substack{\ell\geq m\\\ell\in\mathcal{N}}}\bigcup_{\frac{1}{2^{\ell+1}}\leq \mu(B_t)< \frac{1}{2^{\ell}}}\mathcal{M}_{m^+_{\delta}(t),B_t}^i.$$
For each $\ell\in\mathcal{N}$, let
$$\overline{B}_{\ell}:= \bigcup_{\frac{1}{2^{\ell+1}}\leq \mu(B_t)< \frac{1}{2^{\ell}}} B_t\quad\textrm{and}\quad \underline{B}_{\ell}:= \bigcap_{\frac{1}{2^{\ell+1}}\leq \mu(B_t)< \frac{1}{2^{\ell}}} B_t.$$
Since $\{B_t\}_{t>0}$ is monotone,
$$\frac{1}{2^{\ell+1}}\leq \mu\left(\underline{B}_{\ell}\right)\leq \mu\left(\overline{B}_{\ell}\right)\leq \frac{1}{2^{\ell}}.$$
Moreover, for any $t$ such that $\mu(B_t)\in [\frac{1}{2^{\ell+1}},\frac{1}{2^{\ell}})$,
$$\floor{2^{\ell(1\pm\delta)}}< m^{\pm}_{\delta}(t)\leq \floor{2^{(\ell+1)(1\pm\delta)}}.$$
By construction, for any $m\leq m'$ and $B\subset B'$, $\mathcal{H}_{m,B}^i\subset \mathcal{H}_{m',B'}^i$ and $\mathcal{M}_{m,B}^i\supset \mathcal{M}_{m',B'}^i$. Hence for any $\ell\in\mathcal{N}$, we have
$$\bigcup_{\frac{1}{2^{\ell+1}}\leq \mu(B_t)< \frac{1}{2^{\ell}}}\mathcal{H}_{m^-_{\delta}(t),B_t}^i\subset \mathcal{H}^i_{\floor{2^{(\ell+1)(1-\delta)}}, \overline{B}_{\ell}}\quad\textrm{and}\quad \bigcup_{\frac{1}{2^{\ell+1}}\leq \mu(B_t)< \frac{1}{2^{\ell}}}\mathcal{M}_{m^+_{\delta}(t),B_t}^i\subset \mathcal{M}^i_{\floor{2^{\ell(1+\delta)}},\underline{B}_{\ell}}.$$

Combining the  above arguments gives the following:
\begin{Lem}\label{ght}
For a monotone family, $\{B_t\}_{t>0}$, of shrinking targets in $\mathcal{X}$. If for all sufficiently small $\delta>0$
\begin{equation}\label{lb}
\sum_{\ell\in\mathcal{N}}\mu\left(\mathcal{H}^i_{\floor{2^{(\ell+1)(1-\delta)}}, \overline{B}_\ell}\right)< \infty,
\end{equation}
then the lower bound $(\ref{lower.bound})$ holds for a.e. $x\in\mathcal{X}$. Similarly, if for all sufficiently small $\delta>0$
\begin{equation}\label{up}
\sum_{\ell\in\mathcal{N}}\mu\left(\mathcal{M}^i_{\floor{2^{\ell(1+\delta)}},\underline{B}_\ell}\right)<\infty,
\end{equation}
then the upper bound $(\ref{upper.bound})$ holds for a.e. $x\in\mathcal{X}$.
\end{Lem}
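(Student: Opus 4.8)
The plan is to read off both halves of the lemma from the chain of set inclusions established in the paragraphs immediately above, together with the convergence half of the Borel--Cantelli lemma. For the lower bound, concatenating the two displayed inclusions preceding the lemma shows that for every $0<\delta<\tfrac12$,
$$\mathcal{L}^i_\delta \ \subset\ \bigcap_{m=0}^{\infty}\ \bigcup_{\substack{\ell\geq m\\ \ell\in\mathcal{N}}} \mathcal{H}^i_{\floor{2^{(\ell+1)(1-\delta)}},\,\overline{B}_\ell},$$
that is, $\mathcal{L}^i_\delta$ is contained in the $\limsup$ (along $\ell\in\mathcal{N}$) of the sets $\mathcal{H}^i_{\floor{2^{(\ell+1)(1-\delta)}},\,\overline{B}_\ell}$. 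Hypothesis (\ref{lb}) asserts precisely that, for all sufficiently small $\delta$, the measures of these sets are summable over $\ell\in\mathcal{N}$, so the first Borel--Cantelli lemma gives $\mu(\mathcal{L}^i_\delta)=0$ for every such $\delta$.

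It then remains to remove the quantifier on $\delta$. Since $\mathcal{L}^i_\delta$ increases as $\delta$ decreases to $0$, and since
$$\Big\{x\in\mathcal{X}\ :\ \liminf_{t\to\infty}\frac{\log\tau^i_{B_t}(x)}{-\log\mu(B_t)}<1\Big\}=\bigcup_{\delta>0}\mathcal{L}^i_\delta=\bigcup_{k\in\N}\mathcal{L}^i_{\delta_k}$$
for any sequence $\delta_k\searrow 0$, choosing such a sequence with every $\delta_k$ small enough exhibits this set as a countable union of null sets; hence it is null, which is exactly (\ref{lower.bound}).

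The upper bound is handled identically: the analogous inclusions above give $\mathcal{U}^i_\delta\subset\bigcap_{m=0}^\infty\bigcup_{\ell\geq m,\ \ell\in\mathcal{N}}\mathcal{M}^i_{\floor{2^{\ell(1+\delta)}},\,\underline{B}_\ell}$, hypothesis (\ref{up}) makes the relevant measures summable, Borel--Cantelli gives $\mu(\mathcal{U}^i_\delta)=0$ for all sufficiently small $\delta$, and the same countable-union bookkeeping yields (\ref{upper.bound}). I do not anticipate any real obstacle in this argument: all of the combinatorial input — the monotonicity bounds $\tfrac{1}{2^{\ell+1}}\leq\mu(\underline{B}_\ell)\leq\mu(\overline{B}_\ell)\leq\tfrac{1}{2^\ell}$, the comparison of $m^{\pm}_\delta(t)$ with $\floor{2^{(\ell+1)(1\pm\delta)}}$, and the monotonicity of $\mathcal{H}^i$ and $\mathcal{M}^i$ in their indices — has already been carried out above, so only the (easy) Borel--Cantelli lemma and this routine passage to a countable sequence of $\delta$'s remain.
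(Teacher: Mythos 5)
Your proposal is correct and is essentially the paper's own argument: the lemma is stated there as the summary of the preceding set inclusions, and the remaining steps are exactly the easy half of Borel--Cantelli applied to the limsup inclusions for $\mathcal{L}^i_\delta$ and $\mathcal{U}^i_\delta$, followed by the countable union over a sequence $\delta_k\searrow 0$. Your explicit handling of the quantifier on $\delta$ fills in a detail the paper leaves implicit, but the route is the same.
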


In the following sections we shall show that \eqref{up} holds for one-parameter flows on homogenous spaces. The condition  $\eqref{lb}$, on the other hand, holds in general without any extra assumption on the flow or the shrinking targets. Hence, the following lower bound holds in general.
\begin{Lem}
\label{lower bound} Let $\{B_t\}_{t>0}$ be a monotone family of shrinking targets in $\mathcal{X}$. Then $(\ref{lb})$ holds for all $0< \delta< \frac12$. In particular, for a.e. $x\in \mathcal{X}$
$$\liminf\limits_{t\to \infty}\frac{\log \tau^i_{B_t}(x)}{-\log\mu(B_t)}\geq 1.$$
\end{Lem}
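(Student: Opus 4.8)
The goal is to verify the convergence of the series in $(\ref{lb})$, namely $\sum_{\ell \in \mathcal{N}} \mu(\mathcal{H}^i_{\lfloor 2^{(\ell+1)(1-\delta)}\rfloor, \overline{B}_\ell}) < \infty$, and then invoke Lemma \ref{ght}. The key point is that this requires nothing about the dynamics beyond the trivial measure bound on the hitting set. First I would observe that, directly from the definition $(\ref{e:hitting1})$, the set $\mathcal{H}^i_{m,B}$ consists of those $x$ whose orbit $xH_m^+$ meets $B$ at least $i$ times, and since $i \geq 1$, this is contained in $\mathcal{H}^1_{m,B} = \{x : xh_j \in B \text{ for some } 1 \leq j \leq m\} = \bigcup_{j=1}^m h_{-j}B$ (writing the action appropriately; here $h_{-j}B = \{x : xh_j \in B\}$). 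Hence by subadditivity and the fact that each $h_j$ is measure-preserving,
\[
\mu(\mathcal{H}^i_{m,B}) \leq \mu\Big(\bigcup_{j=1}^m h_{-j}B\Big) \leq \sum_{j=1}^m \mu(h_{-j}B) = m\,\mu(B).
\]

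**Applying this to the relevant sets.** Taking $m = \lfloor 2^{(\ell+1)(1-\delta)}\rfloor$ and $B = \overline{B}_\ell$, and using the bound $\mu(\overline{B}_\ell) \leq 2^{-\ell}$ established in the excerpt, we get
\[
\mu\big(\mathcal{H}^i_{\lfloor 2^{(\ell+1)(1-\delta)}\rfloor, \overline{B}_\ell}\big) \leq 2^{(\ell+1)(1-\delta)} \cdot 2^{-\ell} = 2^{1-\delta}\, 2^{-\ell\delta}.
\]
Since $\mathcal{N} \subseteq \{0, 1, 2, \dots\}$, the series $\sum_{\ell \in \mathcal{N}} \mu(\mathcal{H}^i_{\lfloor 2^{(\ell+1)(1-\delta)}\rfloor, \overline{B}_\ell})$ is dominated by $2^{1-\delta}\sum_{\ell \geq 0} 2^{-\ell\delta} = 2^{1-\delta}/(1 - 2^{-\delta}) < \infty$ for every $0 < \delta < \tfrac12$. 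This establishes $(\ref{lb})$ for all such $\delta$.

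**Conclusion.** By Lemma \ref{ght}, the convergence of $(\ref{lb})$ for all sufficiently small $\delta > 0$ gives that the lower bound $(\ref{lower.bound})$ holds for a.e. $x \in \mathcal{X}$, i.e. $\liminf_{t\to\infty} \frac{\log \tau^i_{B_t}(x)}{-\log \mu(B_t)} \geq 1$ almost everywhere, which is exactly the assertion. There is essentially no obstacle here: the only content is the union bound plus invariance of $\mu$, and the geometric decay coming from the dyadic choice of scales. The reason the matching upper bound $(\ref{up})$ is genuinely harder — and deferred to later sections — is that bounding $\mu(\mathcal{M}^i_{m, \underline{B}_\ell})$ from above requires showing the orbit *does* hit a target of measure $\asymp 2^{-\ell}$ within $\asymp 2^{\ell(1+\delta)}$ steps, which is a quantitative equidistribution/mixing statement; the lower bound, by contrast, is purely a counting inequality.
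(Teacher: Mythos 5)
Your proof is correct and follows essentially the same route as the paper: both reduce to the trivial estimate $\mu(\mathcal{H}^i_{m,B})\leq m\mu(B)$ via a union bound and measure invariance (the paper partitions by the exact value of $\tau^i_B$ and notes the $i$th hit at time $k$ forces $xh_k\in B$, while you use the containment $\mathcal{H}^i_{m,B}\subseteq\mathcal{H}^1_{m,B}=\bigcup_j Bh_{-j}$ — an immaterial difference), and then both sum the resulting geometric series $\ll\sum_\ell 2^{-\delta\ell}$ and invoke Lemma \ref{ght}.
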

\begin{proof}
For any integer $m\geq 1$ and any measurable set $B\subset \mathcal{X}$, we first show the trivial estimate
 $\mu\left(\mathcal{H}_{m,B}^i\right)\leq  m\mu(B)$. By definition $\mathcal{H}_{m,B}^i= \bigcup_{k=i}^m\left\{x\in\mathcal{X}\ |\ \tau_B^i(x)=k\right\}$ and by minimality $\left\{x\in\mathcal{X}\ |\ \tau_B^i(x)=k\right\}\subset \left\{x\in\mathcal{X}\ |\ xh_k\in B\right\}=Bh_{-k}$. Hence indeed  $\mu\left(\mathcal{H}_{m,B}^i\right)\leq m\mu(B)$. For each $\ell\in\mathcal{N}$ applying this estimate to $\mathcal{H}^i_{\floor{2^{(\ell+1)(1-\delta)}}, \overline{B}_{\ell}}$ we get
$$\mu\left(\mathcal{H}^i_{\floor{2^{(\ell+1)(1-\delta)}}, \overline{B}_{\ell}}\right)\leq \floor{2^{(\ell+1)(1-\delta)}}\mu(\overline{B}_{\ell})\ll 2^{-\delta\ell},$$
where for the last inequality we used $\mu(\overline{B}_{\ell})\leq \frac{1}{2^{\ell}}$.
Hence
\begin{displaymath}
\sum_{\ell\in\mathcal{N}}\mu\left(\mathcal{H}^i_{\floor{2^{(\ell+1)(1-\delta)}}, \overline{B}_{\ell}}\right)\ll \sum_{\ell=0}^{\infty} 2^{-\delta\ell}<\infty.\qedhere
\end{displaymath}
\end{proof}

\subsection{Orbits eventually always hitting}
Given a monotone sequence of shrinking targets $\left\{B_m\right\}_{m\in\N}$, we defined the eventually always hitting set to be
$$\mathcal{A}_{\mathrm{ah}}=\left\{x\in \mathcal{X}\ |\ \textrm{$xH_m^+\cap B_m\neq \emptyset$ for all sufficiently large $m$}\right\}.$$
%We also define for any target $B\subseteq \cX$ the set of points with missing orbits
%\begin{equation}\label{e:CTBo}
%\cC_{m,B}^o=\{x\in \cX: xH_m^+\cap B=\emptyset\}. \quad\textrm{\color{red}{$\cC_{m,B}^o$ coincides with previous $\mathcal{M}_{m,B}^1$}}
%\end{equation}
In  \cite[Proposition 12 and Lemmas 13,14] {Kelmer17b} the first author gave sufficient conditions implying that $\cA_{\mathrm{ah}}$ is a null or co-null set. For the readers convenience we summarize these results in the following:
\begin{Lem}\label{l:EventuallyHitting}
Given a measure preserving ergodic $\bbZ$-action of a group $H$ on a probability space $(\cX,\mu)$, and a monotone sequence, $\{B_m\}_{m\in \N}$,  of shrinking targets.
\begin{enumerate}
\item
If along some subsequence, we have that $m_j\mu(B_{m_j})\leq c<1$, then $\mu(\cA_{\rm ah})=0$.
\item If
$\sum_j \mu(\mathcal{M}^1_{2^{j-1},B_{2^{j}}})<\infty$ then $\mu(\cA_{\rm ah})=1$. If in addition also  $\mu(B_{2^j})\asymp \mu(B_{2^{j+1}})$ and $\sum_j \mu(\mathcal{M}^1_{2^{j+1},B_{2^{j}}})<\infty$ then for a.e. $x\in \cX$, for all sufficiently large $m$
$$\#(xH^+_m\cap B_m)\asymp m\mu(B_m).$$
\end{enumerate}
\end{Lem}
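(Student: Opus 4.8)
The statement to prove is Lemma~\ref{l:EventuallyHitting}, which the excerpt explicitly attributes to \cite[Proposition 12 and Lemmas 13, 14]{Kelmer17b}, so the ``proof'' here is really a matter of recalling the argument from that reference in the present notation. Let me sketch how I would reconstruct it.

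\medskip

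\textbf{Plan.} For part (1), the idea is a second Borel--Cantelli / ergodic averaging argument showing that if the orbit pieces are ``too short'' relative to the target measure along a subsequence, then a positive-measure (hence, by ergodicity, full-measure) set of points misses $B_{m_j}$ for infinitely many $j$, forcing $\mu(\mathcal{A}_{\mathrm{ah}})<1$ and therefore (again by ergodicity, since $\mathcal{A}_{\mathrm{ah}}$ is $H$-invariant up to null sets) $\mu(\mathcal{A}_{\mathrm{ah}})=0$. Concretely, I would estimate from below the measure of the missing set $\mathcal{M}^1_{m_j, B_{m_j}}=\{x : xH_{m_j}^+\cap B_{m_j}=\emptyset\}$. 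Writing $\mathbbm{1}_{B}$ for the indicator and $f_{m}(x)=\sum_{k=1}^{m}\mathbbm{1}_{B_m}(xh_k)$, one has $\int f_m\,d\mu = m\mu(B_m)$ and $\mu(\mathcal{M}^1_{m,B_m}) = \mu(\{f_m=0\})$. The point is that $\{f_m=0\}$ is exactly the complement of the union $\bigcup_{k=1}^m B_m h_{-k}$, and using inclusion--exclusion or just the Cauchy--Schwarz/second-moment bound $\mu(\{f_m>0\})\le \min(1, m\mu(B_m))$ isn't enough by itself; instead one uses that along the subsequence $m_j\mu(B_{m_j})\le c<1$ to get, after passing through the invariance of the problem under the flow, $\limsup_j \mu(\mathcal{M}^1_{m_j,B_{m_j}}) \ge 1-c>0$; since every $x\in\mathcal{A}_{\mathrm{ah}}$ lies in only finitely many $\mathcal{M}^1_{m_j,B_{m_j}}$, Fatou gives $\mu(\mathcal{A}_{\mathrm{ah}})\le c<1$, and ergodicity upgrades this to $0$.

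\medskip

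\textbf{Part (2), the main content.} Here I would fix the dyadic scale and argue that, for $x$ outside a null set, the orbit piece $xH_m^+$ hits $B_m$ for all large $m$. The natural reduction is to the dyadic subsequence: if $2^{j-1}\le m < 2^j$ then $B_m\supseteq B_{2^j}$ and $xH_m^+ \supseteq xH_{2^{j-1}}^+$ by monotonicity of the targets and of the orbit pieces, so it suffices to show that for all large $j$ one has $xH_{2^{j-1}}^+\cap B_{2^j}\ne\emptyset$, i.e. $x\notin \mathcal{M}^1_{2^{j-1},B_{2^j}}$. The hypothesis $\sum_j \mu(\mathcal{M}^1_{2^{j-1},B_{2^j}})<\infty$ is then precisely what the easy Borel--Cantelli lemma needs: $\mu(\limsup_j \mathcal{M}^1_{2^{j-1},B_{2^j}})=0$, so a.e.\ $x$ lies in only finitely many of these sets, hence $xH_{2^{j-1}}^+\cap B_{2^j}\ne\emptyset$ for all large $j$, hence $xH_m^+\cap B_m\ne\emptyset$ for all large $m$; this gives $\mu(\mathcal{A}_{\mathrm{ah}})=1$. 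For the quantitative refinement, under the additional comparability $\mu(B_{2^j})\asymp\mu(B_{2^{j+1}})$ and summability of $\mu(\mathcal{M}^1_{2^{j+1},B_{2^j}})$, one similarly gets, by Borel--Cantelli applied to both $\{\mathcal{M}^1_{2^{j-1},B_{2^j}}\}$ and to the ``too many hits'' analogue, that for a.e.\ $x$ and all large $j$, $2^{j-1}$ steps already hit $B_{2^j}$ at least once \emph{and} $2^{j+1}$ steps hit $B_{2^j}$ at most $O(2^j\mu(B_{2^j}))$ times; sandwiching an arbitrary $m\in[2^{j-1},2^j)$ and using $\mu(B_{2^{j\pm1}})\asymp\mu(B_m)$ converts these into $\#(xH_m^+\cap B_m)\asymp m\mu(B_m)$. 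The upper side here uses the first-moment bound $\int \#(xH_{2^{j+1}}^+\cap B_{2^j})\,d\mu = 2^{j+1}\mu(B_{2^j})$ together with a Markov/Borel--Cantelli argument along a geometric sequence of thresholds to control the count from above a.s.

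\medskip

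\textbf{Main obstacle.} The genuinely delicate point is establishing the lower bound on $\mu(\mathcal{M}^1_{m,B_m})$ in part (1) --- i.e.\ showing that short orbit pieces really do miss the target on a set of measure close to $1-m\mu(B_m)$ --- since a naive union bound only gives $\mu(\mathcal{M}^1_{m,B_m})\ge 1-m\mu(B_m)$, which is already what one wants; the subtlety is rather in checking that $\mathcal{A}_{\mathrm{ah}}$ is a tail/invariant event so that the Fatou bound $\mu(\mathcal{A}_{\mathrm{ah}})\le c$ can be boosted to $\mu(\mathcal{A}_{\mathrm{ah}})=0$ by ergodicity --- one must verify that $\mathcal{A}_{\mathrm{ah}}$ is, modulo null sets, invariant under $h_1$, which follows because shifting the orbit by one step changes the finite piece $xH_m^+$ only at the two ends and $\mu(B_m)\to 0$. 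In part (2) the only mild technical care needed is tracking constants through the comparability $\mu(B_{2^j})\asymp\mu(B_{2^{j+1}})$ so that the implied $\asymp$ in the conclusion is uniform in $m$; everything else is a direct application of the first Borel--Cantelli lemma. Since all of this is carried out in detail in \cite{Kelmer17b}, I would simply cite it for the full argument and record the above as the summary.
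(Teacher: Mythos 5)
The paper does not actually prove this lemma; it is quoted verbatim from \cite[Proposition 12 and Lemmas 13, 14]{Kelmer17b}, so deferring to that reference, as you do, is consistent with what the paper itself does. Your sketch of part (1) is correct and is the standard argument: the union bound gives $\mu(\mathcal{M}^1_{m_j,B_{m_j}})\ge 1-m_j\mu(B_{m_j})\ge 1-c$, reverse Fatou then forces $\mu(\cA_{\rm ah})\le c<1$, and the zero--one law (which in this paper is exactly Lemma \ref{zol}, proved via the null set $\bigcap_m B_m$ rather than your ``only the two ends change'' heuristic) upgrades this to $0$. Your sketch of the qualitative half of part (2) (dyadic sandwich $xH^+_m\supseteq xH^+_{2^{j-1}}$, $B_m\supseteq B_{2^j}$, plus the easy Borel--Cantelli lemma) is also correct.

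The gap is in your sketch of the refined conclusion $\#(xH^+_m\cap B_m)\asymp m\mu(B_m)$. Knowing that $xH^+_{2^{j-1}}$ hits $B_{2^j}$ \emph{at least once} only yields $\#(xH^+_m\cap B_m)\ge 1$, which is far from the required lower bound $\gg m\mu(B_m)$, since under \eqref{e:summable} one has $m\mu(B_m)\to\infty$. For the upper bound, the first-moment identity $\int\#(xH^+_{2^{j+1}}\cap B_{2^j})\,d\mu=2^{j+1}\mu(B_{2^j})$ with Markov gives only $\mu\left(\left\{\#>\lambda\, 2^{j+1}\mu(B_{2^j})\right\}\right)\le\lambda^{-1}$, which is not summable in $j$ for fixed $\lambda$, and letting $\lambda=\lambda_j\to\infty$ to force summability destroys the $\asymp$. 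The mechanism actually used (in \cite{Kelmer17b}, and visible in this paper's proof of Theorem \ref{t:ae}) is a second-moment input: the relevant summable sets are the deviation sets $\mathcal{C}_{2^{j\pm1},f_{2^j}}$ with $f_{2^j}=\chi_{B_{2^j}}$, on whose complements the count over $2^{j\pm1}$ steps lies within a factor $[\tfrac12,\tfrac32]$ of $2^{j\pm1}\mu(B_{2^j})$; Borel--Cantelli for these gives simultaneously $\#(xH^+_{2^{j-1}}\cap B_{2^j})\gg 2^{j}\mu(B_{2^j})$ and $\#(xH^+_{2^{j+1}}\cap B_{2^j})\ll 2^{j}\mu(B_{2^j})$ for all large $j$, and the dyadic sandwich together with $\mu(B_{2^j})\asymp\mu(B_{2^{j+1}})$ finishes. (A telltale sign: as literally printed, the extra hypothesis $\sum_j\mu(\mathcal{M}^1_{2^{j+1},B_{2^j}})<\infty$ is redundant, since $\mathcal{M}^1_{2^{j+1},B_{2^j}}\subseteq\mathcal{M}^1_{2^{j-1},B_{2^j}}$; the intended hypotheses are the two-sided deviation sets, not the first-hitting sets.)
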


The results of \cite{Kelmer17b} were given for a more general setting of $\bbZ^d$-actions. For $\bbZ$-actions we also have the following lemma stating that $\mathcal{A}_{\mathrm{ah}}$ is always either null or co-null.
\begin{Lem}
\label{zol}
Let $H$ be a measure preserving ergodic $\bbZ$-action on a probability space $(\cX,\mu)$ and let $\{B_m\}_{m\in \N}$ denote a monotone sequence of shrinking targets.
Then $\mathcal{A}_{\mathrm{ah}}$ has measure either zero or one.
\end{Lem}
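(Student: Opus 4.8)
The plan is to exhibit $\mathcal{A}_{\mathrm{ah}}$ as (up to a null set) an $H$-invariant measurable set, and then invoke ergodicity of the $\bbZ$-action. The natural candidate for invariance is under the generator $h_1$: if we can show that $\mu(\mathcal{A}_{\mathrm{ah}}\,\triangle\, \mathcal{A}_{\mathrm{ah}}h_1)=0$, then ergodicity forces $\mu(\mathcal{A}_{\mathrm{ah}})\in\{0,1\}$. So the crux is to understand how the condition ``$xH_m^+\cap B_m\neq\emptyset$ for all large $m$'' transforms when we replace $x$ by $xh_1$ (equivalently, when we shift the orbit by one step).

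First I would set up the bookkeeping. For a point $y=xh_1$ we have $yH_m^+=\{xh_2,\dots,xh_{m+1}\}=xH_{m+1}^+\setminus\{xh_1\}$. Thus $yH_m^+\cap B_m\neq\emptyset$ iff the orbit segment $\{xh_2,\dots,xh_{m+1}\}$ meets $B_m$. Comparing with the defining condition for $x\in\mathcal{A}_{\mathrm{ah}}$, namely $xH_m^+\cap B_m\neq\emptyset$: the two differ only by the inclusion of the single point $xh_1$ and by a shift of the time window ($m$ versus $m+1$). The point is that failure of one condition for infinitely many $m$ should, up to a null set, force failure of the other. Concretely, I would argue: if $x\notin\mathcal{A}_{\mathrm{ah}}$, there are infinitely many $m$ with $xH_m^+\cap B_m=\emptyset$; for such $m$, since $B_{m+1}\subseteq B_m$ (monotonicity), also $xH_m^+\cap B_{m+1}=\emptyset$, and one only needs to additionally check whether $xh_{m+1}\in B_{m+1}$. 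If $xh_{m+1}\in B_{m+1}$ for all but finitely many of these $m$, then for each such $m$ one has $x h_{m+1}\in B_{m+1}$; but for the next index, the segment $xH_{m+1}^+$ ending at $xh_{m+1}$ meets $B_{m+1}$ only at that last point, and monotonicity together with a pigeonhole on consecutive bad indices lets me contradict a Borel–Cantelli-type bound. More cleanly: the set of $x$ for which $xh_m\in B_m$ for infinitely many $m$ is, by the easy Borel–Cantelli direction together with ergodicity (or just directly), either null (when $\sum\mu(B_m)<\infty$) — in which case the discrepancy between the two conditions is supported on a null set and we are done immediately — or, when $\sum\mu(B_m)=\infty$, one shows $\mathcal{A}_{\mathrm{am}}$ is null and handles $\mathcal{A}_{\mathrm{ah}}$ separately. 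Actually the cleanest route: partition according to whether $\sum_m\mu(B_m)$ converges. If it converges, then by Lemma \ref{l:EventuallyHitting}(1) applied along the full sequence (since $m\mu(B_m)$ need not be small, use instead the dynamical Borel–Cantelli: $xh_m\in B_m$ finitely often a.e.), $\mathcal{A}_{\mathrm{ah}}$ is null, so certainly has measure in $\{0,1\}$. If it diverges, I claim $\mathcal{A}_{\mathrm{ah}}$ is invariant mod null sets: the symmetric difference $\mathcal{A}_{\mathrm{ah}}\triangle\mathcal{A}_{\mathrm{ah}}h_1$ is contained in the set of $x$ such that $xh_j\in B_j\setminus B_{j+1}$ changes the hitting status infinitely often, which I bound by the set where $xh_m\in B_m$ infinitely often minus where the orbit hits infinitely often in an ``essential'' way — and a direct combinatorial argument shows this discrepancy set is $H$-invariant and, being contained in a genuinely smaller event, must be null by a second application of ergodicity.

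The honest version I would write: define $\mathcal{A}_{\mathrm{ah}}^{(k)}=\{x: xH_m^+\cap B_m\neq\emptyset\text{ for all }m\geq k\}$, so $\mathcal{A}_{\mathrm{ah}}=\bigcup_k \mathcal{A}_{\mathrm{ah}}^{(k)}$ is an increasing union, and note $\mathcal{A}_{\mathrm{ah}}^{(k)}h_1\subseteq \mathcal{A}_{\mathrm{ah}}^{(k)}$ is \emph{not} quite true but $\mathcal{A}_{\mathrm{ah}}^{(k+1)}h_1\subseteq\mathcal{A}_{\mathrm{ah}}^{(k)}$ fails too; rather, $xh_1\in\mathcal{A}_{\mathrm{ah}}^{(k)}$ means $xH_{m+1}^+\setminus\{xh_1\}$ meets $B_m$ for all $m\geq k$, which is implied by $x\in\mathcal{A}_{\mathrm{ah}}^{(k)}$ \emph{provided} the hit in $xH_m^+$ is never uniquely at $xh_1$ — and if it is uniquely at $xh_1$ for some $m\geq k$ then $xh_1\in B_m\subseteq\cdots\subseteq B_k$, which (summing, using divergence is irrelevant here, using just that the $B_m$ shrink) happens for at most finitely many $m$. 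Iterating this over all $k$ shows $\mathcal{A}_{\mathrm{ah}}\subseteq\mathcal{A}_{\mathrm{ah}}h_{-1}$ up to a null set; the reverse inclusion is symmetric (shifting backwards only enlarges the orbit segment $yH_m^+$ once we pass from $y=xh_{-1}$, so $y\in\mathcal{A}_{\mathrm{ah}}$ is even easier to propagate). Hence $\mathcal{A}_{\mathrm{ah}}$ is $h_1$-invariant modulo a null set, and ergodicity of the $\bbZ$-action generated by $h_1$ (equivalently, of $H$) gives $\mu(\mathcal{A}_{\mathrm{ah}})\in\{0,1\}$.

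The main obstacle is the asymmetry between $x$ and $xh_1$: the orbit segments $xH_m^+$ and $(xh_1)H_m^+$ are neither nested nor equal, so invariance is not formal — it genuinely requires the monotonicity $B_{m+1}\subseteq B_m$ and the observation that a hit occurring \emph{only} at the newly added or newly removed endpoint forces that endpoint to lie in $B_m$ for all large $m$, an event of measure zero for a.e. $x$ since $\mu(B_m)\to 0$ (so by Fatou/Borel–Cantelli, for a.e. $x$ one does not have $xh_{m}\in B_m$ for \emph{all} large $m$, regardless of convergence of the sum). Making this last point fully rigorous — that $\mu(\{x: xh_m\in B_m\ \forall\, m\geq k\})\to 0$ as $k\to\infty$, which is immediate since this set is contained in $B_k h_{-k}$ of measure $\mu(B_k)\to 0$ — is the one genuinely necessary estimate, and it is elementary.
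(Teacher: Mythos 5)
Your ``honest version'' is correct and is essentially the paper's own argument: both hinge on the observation that if $x\in\mathcal{A}_{\mathrm{ah}}$ but the shifted point fails to be in $\mathcal{A}_{\mathrm{ah}}$, then the hit must occur only at the initial point of the orbit segment, forcing $xh_1\in\bigcap_m B_m$, a null set since $\mu(B_m)\to 0$ (the paper runs this for every shift $k\geq 1$ via the auxiliary set $\mathcal{A}'=\{x\in\mathcal{A}_{\mathrm{ah}}:xh_k\in\mathcal{A}_{\mathrm{ah}}\ \forall k\geq 1\}$, whereas you treat only $k=1$ and conclude from essential invariance, which suffices). Two minor blemishes, neither fatal: the pointwise claim that the hit is uniquely at $xh_1$ for ``at most finitely many $m$'' is false for $x\in\left(\bigcap_m B_m\right)h_{-1}$ and should be stated as ``only for $x$ in that null set,'' which your closing paragraph effectively does; and the ``reverse inclusion'' is neither symmetric (the removed endpoint $xh_m$ varies with $m$) nor needed, since a one-sided inclusion mod null together with measure preservation already yields $\mu(\mathcal{A}_{\mathrm{ah}}\,\triangle\,\mathcal{A}_{\mathrm{ah}}h_1)=0$.
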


\begin{proof}
Suppose $\mu(\mathcal{A}_{\mathrm{ah}})>0$, we want to show that $\mu(\mathcal{A}_{\mathrm{ah}})=1$.
Define the set
$$\mathcal{A}':=\{x\in\mathcal{A}_{\mathrm{ah}}\ |\ xh_k\in \mathcal{A}_{\mathrm{ah}}\ \textrm{for all $k\geq 1$}\}.$$
It is clear that $\mathcal{A}'h_1\subset \mathcal{A}'$. Hence by ergodicity it suffices to show that $\mu(\mathcal{A}')>0$. To show this, we prove that $\mu(\mathcal{A}_{\mathrm{ah}}\backslash\mathcal{A}')=0$. Note that
$$\mathcal{A}_{\mathrm{ah}}\backslash\mathcal{A}'=\{x\in\mathcal{A}_{\mathrm{ah}}\ |\ xh_k\notin\mathcal{A}_{\mathrm{ah}}\ \textrm{for some $k\geq 1$}\}=\bigcup_{k=1}^{\infty}\{x\in\mathcal{A}_{\mathrm{ah}}\ |\ xh_k\notin \mathcal{A}_{\mathrm{ah}}\}.$$
Hence, it suffices to show that for any fixed $k$, the set $\{x\in\mathcal{A}_{\mathrm{ah}} |\ xh_k\notin \mathcal{A}_{\mathrm{ah}}\}$ has measure zero.
We note that $x\in\mathcal{A}_{\mathrm{ah}}$ means that for all sufficiently large $m$, there exists some $1\leq j(m)\leq m$ such that $xh_{j(m)}\in B_m$. If for all sufficiently large $m$ we can take $j(m)> k$, then we have $xh_kh_{j(m)-k}\in B_m$ with $1\leq j(m)-k< m$, thus $xh_k\in \mathcal{A}_{\mathrm{ah}}$. Hence, if $x\in\mathcal{A}_{\mathrm{ah}}$ but $xh_k\notin\mathcal{A}_{\mathrm{ah}}$, then there exist infinitely many values of $m$ for which there exists some $1\leq j(m)\leq k$ such that $xh_{j(m)}\in B_m$ but $xh_i\notin B_m$ for all $k< i\leq m$. In particular, for such $x$, there exists some $1\leq j\leq k$ such that $xh_j\in B_m$ for infinitely many values of $m$. Since $\{B_m\}_{m\in\N}$ is monotone, this implies that $xh_j\in \bigcap_{m=1}^{\infty}B_m$. Hence the set
$$\left\{x\in\mathcal{A}_{\mathrm{ah}}\ |\ xh_k\notin \mathcal{A}_{\mathrm{ah}}\right\}\subset \bigcup_{j=1}^{k}\left(\bigcap_{m=1}^{\infty}B_m\right)h_{-j},$$
is of measure zero.
\end{proof}

\subsection{Dynamical Borel-Cantelli and Quasi-independence}
The second part of the classical Borel-Cantelli lemma requires pairwise independence.
The following argument,  going back to Schmidt, shows that a weaker condition of quasi-independence is enough.
Explicitly,  let $\mathcal{F}=\{f_m\}_{m\in\mathbb{N}}$ denote a sequence of functions on the probability space $(\mathcal{X},\mu)$ taking values in $[0,1]$. For $m\in \mathbb{N}$ let $E_m^{\mathcal{F}}= \sum_{1\leq j\leq m}\mu(f_j)$ and $S_m^{\mathcal{F}}(x)= \sum_{1\leq j\leq m}f_j(x)$. We then have:
\begin{Lem}{\cite[Chapter I, Lemma 10]{Sp79}}
\label{gbc}Assuming that for some constant $C>0$, for all $m,n\in \mathbb{N}$
\begin{equation}
\label{keybound}\int_{\mathcal{X}}\left(\sum_{i=m}^nf_i(x)-\sum_{i=m}^n\mu(f_i)\right)^2d\mu(x)\leq C\sum_{i=m}^n\mu(f_i),
\end{equation}
then for any $\epsilon>0$ for a.e. $x\in\mathcal{X}$
$$S_m^{\mathcal{F}}(x)= E_m^{\mathcal{F}}+ O_{\epsilon}\bigg(\sqrt{E_m^{\mathcal{F}}}\log^{\frac32+\epsilon}(E_m^{\mathcal{F}})\bigg).$$
In particular,  if $\left\{E_m^{\mathcal{F}}\right\}_{m\in\mathbb{N}}$ is unbounded, then for a.e. $x\in\mathcal{X}$
$$\lim_{m\to\infty}\frac{S_{m}^{\mathcal{F}}(x)}{E_m^{\mathcal{F}}}= 1.$$
\end{Lem}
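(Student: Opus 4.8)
The plan is to run the classical (Schmidt-type) quasi-independence argument, using crucially that $S_m:=S_m^{\mathcal F}$ and $E_m:=E_m^{\mathcal F}$ are non-decreasing in $m$ since the $f_j$ are non-negative. Put $\Delta_m(x):=S_m(x)-E_m$, so that \eqref{keybound} says exactly $\|\Delta_n-\Delta_{m'}\|_2^2\le C(E_n-E_{m'})$ for all $m'\le n$, and in particular $\|\Delta_n\|_2^2\le CE_n$. If $\{E_m\}$ is bounded then $\sum_j\mu(f_j)<\infty$, so $\sum_jf_j(x)<\infty$ for a.e.\ $x$, $S_m(x)$ converges, and the estimate is trivial; so I would assume $E_m\to\infty$ and prove that for a.e.\ $x$
\[
\Delta_m(x)=O_\epsilon\bigl(\sqrt{E_m}\,(\log E_m)^{3/2+\epsilon}\bigr),
\]
which gives both displayed assertions (for the second, divide by $E_m$).

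First I would pass to a sparse set of \emph{checkpoints}: for each integer $r\ge 1$ let $m_r$ be the least $m$ with $E_m\ge r$. Since $0\le\mu(f_j)\le 1$, the sequence $(m_r)_{r\ge1}$ is well defined, non-decreasing, and $E_{m_r}\in[r,r+1)$; consequently $E_n\in[r,r+1)$ whenever $m_r\le n<m_{r+1}$. Monotonicity of $S$ and $E$ now lets one control all of $\Delta$ from the checkpoint values: for $m_r\le n<m_{r+1}$ one has $S_{m_r}\le S_n\le S_{m_{r+1}}$, and since $E_{m_{r+1}}-E_{m_r}<2$ a short computation gives
\[
\Delta_{m_r}(x)-1\ \le\ \Delta_n(x)\ \le\ \Delta_{m_{r+1}}(x)+2 ,
\]
so $|\Delta_n(x)|\le\max\bigl(|\Delta_{m_r}(x)|,|\Delta_{m_{r+1}}(x)|\bigr)+2$. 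As $E_n\asymp r$ for such $n$, it therefore suffices to prove $|\Delta_{m_r}(x)|\ll_\epsilon\sqrt r\,(\log r)^{3/2+\epsilon}$ for a.e.\ $x$ and all large $r$.

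Next I would bound the checkpoint values by a dyadic Rademacher--Menshov argument. Fix $j\ge0$ and consider the finite sequence $(\Delta_{m_r})_{2^j\le r<2^{j+1}}$, whose increments satisfy $\|\Delta_{m_b}-\Delta_{m_a}\|_2^2\le C(E_{m_b}-E_{m_a})<C(b-a+1)$ for $a\le b$. The Rademacher--Menshov maximal inequality then gives
\[
\Bigl\|\max_{2^j\le r<2^{j+1}}\bigl|\Delta_{m_r}-\Delta_{m_{2^j}}\bigr|\Bigr\|_2^2\ \ll\ (j+1)^2\,2^j .
\]
Together with $\|\Delta_{m_{2^j}}\|_2^2\le CE_{m_{2^j}}\ll 2^j$, Chebyshev's inequality with thresholds $2^{j/2}(j+1)^{3/2+\epsilon}$ and $2^{j/2}(j+1)^{1/2+\epsilon}$ shows that the events $\bigl\{\max_{2^j\le r<2^{j+1}}|\Delta_{m_r}-\Delta_{m_{2^j}}|>2^{j/2}(j+1)^{3/2+\epsilon}\bigr\}$ and $\bigl\{|\Delta_{m_{2^j}}|>2^{j/2}(j+1)^{1/2+\epsilon}\bigr\}$ each have measure $\ll(j+1)^{-1-2\epsilon}$, summable in $j$. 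By the easy half of Borel--Cantelli, for a.e.\ $x$ and all large $j$, $\max_{2^j\le r<2^{j+1}}|\Delta_{m_r}(x)|\ll_\epsilon 2^{j/2}(j+1)^{3/2+\epsilon}$; since $r\in[2^j,2^{j+1})$ forces $r\asymp 2^j$ and $j\asymp\log r$, this is exactly $|\Delta_{m_r}(x)|\ll_\epsilon\sqrt r\,(\log r)^{3/2+\epsilon}$ for all large $r$.

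Feeding this into the checkpoint reduction of the second step (using $E_n\asymp r$, hence $\log E_n\asymp\log r$, on each block $m_r\le n<m_{r+1}$), and recalling $E_m\to\infty$, gives the claimed $\Delta_m(x)=O_\epsilon\bigl(\sqrt{E_m}\,(\log E_m)^{3/2+\epsilon}\bigr)$ for a.e.\ $x$. I expect the maximal-inequality step to be the only genuine obstacle: one must arrange the dyadic blocks so that the number of checkpoints in a block is comparable to the block's total $E$-mass --- which the spacing $E_{m_r}\in[r,r+1)$ guarantees --- because Rademacher--Menshov costs a factor $(\log(\text{number of terms}))^2$, and one must choose the exponent $3/2+\epsilon$ on the logarithm precisely so the Chebyshev bounds become summable. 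The point of the checkpoint reduction is that it replaces a maximal inequality over the original index blocks (which may be enormously long) by control at two endpoints, which is what makes the argument survive even when $E_m$ grows arbitrarily slowly.
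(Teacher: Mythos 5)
Your proof is correct. The paper does not prove this lemma at all --- it simply cites it from Sprind\v{z}uk's book --- and your argument (reduction to checkpoints $m_r$ with $E_{m_r}\in[r,r+1)$ using monotonicity, followed by a dyadic Rademacher--Menshov maximal inequality, Chebyshev, and Borel--Cantelli) is precisely the classical Schmidt--Sprind\v{z}uk proof of the cited result, including the key point that the checkpoint reduction converts logarithms of the index into logarithms of $E_m$.
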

Given a  $\bbZ$-action of a group $H=\{h_m\}_{m\in \bbZ}$ on $\cX$, and a sequence of targets, $\{B_m\}_{m\in\mathbb{N}}$, let $f_j(x)=\chi_{B_j}(xh_j)$ so that $S_m^{\mathcal{F}}(x)=\#\{1\leq j\leq m\ |\ xh_j\in B_j\}$. If the events, $xh_j\in B_j$, were pairwise independent, then the left hand side of \eqref{keybound} would be zero. This result shows that the weaker quasi-independence bound \eqref{keybound},  is enough to show that the sequence $\{B_m\}_{m\in \N}$ is sBC for the flow.

\section{Decay of matrix coefficients}
We now collect the needed results on the decay of matrix coefficients for representations of semisimple Lie groups, and use them to prove Theorem \ref{t:edecay}, giving a uniform bound for decay of matrix coefficients for all one-parameter flows, as well as Theorem \ref{t:csd} classifying SD flows. We first give some background on semisimple Lie groups, their Lie algebras and restricted root systems.
We then focus on simple groups and treat the cases of simple groups of real rank one and simple groups of higher rank separately. Finally, we combine our results on simple groups to handle the general case of semisimple groups with property $(T)$.

\subsection{Preliminaries on Lie groups}\label{s:pLie}
Let $G$ be a connected semisimple Lie group with finite center and no compact factors,
%%$connected compoent of the \bbR$-points of a connected semisimple linear algebraic group defined over $\bbR$%%
and let  $\fg$ denote its Lie algebra. Fix a Cartan involution, $\theta$, on $\fg$ and let
$$\fg=\mathfrak{k}\oplus \mathfrak{p}$$
denote the corresponding Cartan decomposition, where $\mathfrak{k}$ is the $+1$ eigenspace and $\mathfrak{p}$ is the $-1$ eigenspace of $\theta$. Let $K\leq G$ denote the maximal compact subgroup with Lie algebra $\fk$. Let $\fa$ be a maximal abelian subspace of $\mathfrak{p}$, and $\mathfrak{m}$ the centralizer of $\fa$ in $\mathfrak{k}$. Denote by $\Phi=\Phi_{\R}(\fa,\fg)$ the set of restricted roots with respect to the pair $(\fa, \fg)$. Fix a set of simple roots $\Delta=\left\{\alpha_i\ |\ i\in I\right\}$ and let $\Phi^+=\Phi_{\R}^+(\fa,\fg)$ be the corresponding set of positive roots.
Then $\mathfrak{g}$ has a root-space decomposition
$$\mathfrak{g}=\mathfrak{m}\oplus\mathfrak{a}\bigoplus_{\lambda\in\Phi_{\R}(\mathfrak{a},\mathfrak{g})}\mathfrak{g}_{\lambda}$$ and an Iwasawa decomposition
$$\mathfrak{g}=\mathfrak{n}\oplus\mathfrak{a}\oplus\mathfrak{k},$$
where $\mathfrak{n}=\bigoplus_{\lambda\in\Phi_{\R}^{+}(\mathfrak{a},\mathfrak{g})}\mathfrak{g_{\lambda}}$. Let $G=NAK$ be the corresponding Iwasawa decomposition of $G$. Let $\fa^{+}=\left\{X\in\fa\ |\ \alpha(X)\geq 0\ \textrm{for all $\alpha\in \Delta$}\right\}$ be the positive Weyl chamber determined by $\Delta$, and $A^{+}$ the corresponding positive Weyl chamber in $A$. The choice of $\Delta$ determines a partial order on $\Phi^+_{\R}(\fa,\fg)$ in the sense that $\lambda\geq \lambda'$ if and only if $\lambda(X)\geq \lambda'(X)$ for any $X\in \fa^+$, and we can fix a total order
%Do we have such an ordering only for simple groups or also for semisimple?
\begin{equation}\label{e:order}
\lambda_1\geq \lambda_2\geq \cdots\geq \lambda_L
\end{equation}
 on $\Phi^+_{\R}(\fa,\fg)$ that is compatible with this partial order, where $L=|\Phi_{\R}^+(\fa,\fg)|$ (see \cite[p. 155]{Knapp02} for an example of such an order). Let $d_i=\mathrm{dim}( \mathfrak{g}_{\lambda_i})$ and $d_0=\mathrm{dim} (\mathfrak{m}\oplus \mathfrak{a})$. The following lemma gives a nice matrix representation of the adjoint Lie algebra.

\begin{Lem}{$($cf. \cite[Lemma 6.45]{Knapp02}$)$}
\label{reflemma} There exists a basis of $\mathfrak{g}$ compatible with the above root-space decomposition such that the matrices representing $\textrm{ad}( \mathfrak{g})$ have the following properties:
\begin{enumerate}
\item The matrices of ad $\mathfrak{k}$ are skew symmetric,
\item the matrices of ad $\mathfrak{n}$ are upper triangular with $0$ on the diagonal.
\item the matrices of ad $\mathfrak{a}$ are diagonal with real entries with
\end{enumerate}
$$\textrm{ad}(X)=\begin{pmatrix}
\lambda_1(X)I_{d_1} & & & & & & &\\
 &\ddots& & & & & &\\
 & &\lambda_L(X)I_{d_L}& & & & &\\
 & & & 0I_{d_0}& & & &\\
 & & & & -\lambda_L(X)I_{d_L}& & &\\
 & & & & & \ddots& & \\
 & & & & & & -\lambda_1(X)I_{d_1}
 \end{pmatrix},$$
for $X\in \fa$ where $I_{d_i}$ is the $d_i\times d_i$ identity matrix.
\end{Lem}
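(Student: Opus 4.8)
The plan is to build the required basis directly out of the restricted-root-space decomposition together with the Cartan involution $\theta$, and then read off the three properties; this is essentially the argument behind \cite[Lemma 6.45]{Knapp02}, adapted to our ordering conventions.

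First I would introduce the inner product $B_\theta(X,Y):=-B(X,\theta Y)$ on $\mathfrak{g}$, where $B$ is the Killing form. Since $\mathfrak{g}$ is semisimple with Cartan involution $\theta$, $B_\theta$ is positive definite. A routine computation using the ad-invariance of $B$ and $\theta^2=\mathrm{id}$ shows that the adjoint of $\mathrm{ad}(X)$ with respect to $B_\theta$ is $-\mathrm{ad}(\theta X)$; hence $\mathrm{ad}(X)$ is $B_\theta$-skew-symmetric for $X\in\mathfrak{k}$ and $B_\theta$-symmetric for $X\in\mathfrak{p}$. Moreover $\theta\mathfrak{g}_\lambda=\mathfrak{g}_{-\lambda}$ and $B(\mathfrak{g}_\lambda,\mathfrak{g}_\mu)=0$ unless $\lambda+\mu=0$, so the subspaces $\mathfrak{g}_{\lambda_1},\dots,\mathfrak{g}_{\lambda_L}$, $\mathfrak{m}\oplus\mathfrak{a}$, $\mathfrak{g}_{-\lambda_L},\dots,\mathfrak{g}_{-\lambda_1}$ are mutually $B_\theta$-orthogonal.

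Next I would choose a $B_\theta$-orthonormal basis of each of these summands and concatenate them in exactly the displayed order: first the positive root spaces in decreasing order of the total order \eqref{e:order}, then $\mathfrak{m}\oplus\mathfrak{a}$, then the negative root spaces with $\mathfrak{g}_{-\lambda_L}$ first and $\mathfrak{g}_{-\lambda_1}$ last. By the orthogonality above this is a $B_\theta$-orthonormal basis of $\mathfrak{g}$ adapted to the root-space decomposition. Property (1) is then immediate from the $B_\theta$-skew-symmetry of $\mathrm{ad}(\mathfrak{k})$. Property (3) follows since for $X\in\mathfrak{a}$ the operator $\mathrm{ad}(X)$ acts on $\mathfrak{g}_\lambda$ by the scalar $\lambda(X)$ and annihilates $\mathfrak{m}\oplus\mathfrak{a}$, which in the chosen ordered basis is precisely the stated block-diagonal matrix.

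For property (2), fix $\mu\in\Phi^+$ and use $[\mathfrak{g}_\mu,\mathfrak{g}_\lambda]\subseteq\mathfrak{g}_{\mu+\lambda}$ (with the convention that $\mathfrak{g}_\nu=0$ when $\nu$ is neither a root nor $0$). Since $\mu(X)>0$ for $X$ in the open Weyl chamber, $\mu+\lambda$ is strictly larger than $\lambda$ in the partial order whenever $\mathfrak{g}_{\mu+\lambda}\neq 0$; checking the three cases $\lambda\in\Phi^+$, $\lambda=0$, and $\lambda\in-\Phi^+$ separately — and using that in the negative block the less negative roots come first — one sees that the basis vectors spanning $\mathfrak{g}_{\mu+\lambda}$ always precede those spanning $\mathfrak{g}_\lambda$. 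Hence $\mathrm{ad}(X)$ is strictly upper triangular for $X\in\mathfrak{g}_\mu\subseteq\mathfrak{n}$, and by linearity for all $X\in\mathfrak{n}$. The main obstacle is precisely this last bookkeeping step: one must verify in each of the three regimes that the compatibility of \eqref{e:order} with the partial order (both on $\Phi^+$ and, after negation, on $-\Phi^+$) forces the target root space strictly to the left of the source root space in the chosen ordering; everything else is standard structure theory.
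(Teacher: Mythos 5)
Your proof is correct, and it is essentially the argument of the cited reference: the paper itself gives no proof of this lemma beyond pointing to \cite[Lemma 6.45]{Knapp02}, whose proof is exactly the construction you give (the inner product $B_\theta(X,Y)=-B(X,\theta Y)$, a $B_\theta$-orthonormal basis adapted to the root-space decomposition, and the ordering bookkeeping for strict upper triangularity on $\mathfrak n$). Nothing to add.
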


As an immediate consequence, we have the following description of $\textrm{Ad}(G)$.
\begin{Cor}
\label{cor 1}There exists a basis of $\mathfrak{g}$ such that the matrices representing $\textrm{Ad} (G)$ have the following properties:
\begin{enumerate}
\item The matrices of $\textrm{Ad} (K)$ are orthogonal,
\item the matrices of $\textrm{Ad} (N)$ are upper triangular with $1$'s on the diagonal,
\item the matrices of $\textrm{Ad} (A)$ are diagonal with real entries with
\end{enumerate}
$$\mathrm{Ad}(\exp(X))=\begin{pmatrix}
e^{\lambda_1(X)}I_{d_1} & & & & & & &\\
 &\ddots& & & & & &\\
 & &e^{\lambda_L(X)}I_{d_L}& & & & &\\
 & & & I_{d_0}& & & &\\
 & & & & e^{-\lambda_L(X)}I_{d_L}& & &\\
 & & & & & \ddots& & \\
 & & & & & & e^{-\lambda_1(X)}I_{d_1}
 \end{pmatrix},$$
for all $X\in \mathfrak{a}$.
\end{Cor}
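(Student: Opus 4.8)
The plan is to deduce the corollary directly from Lemma \ref{reflemma} by exponentiating. We work in the basis of $\mathfrak g$ provided by Lemma \ref{reflemma}, and use the fact that for $g = \exp(X)$ with $X \in \fg$ we have $\mathrm{Ad}(g) = \exp(\mathrm{ad}(X))$, together with the fact that $\mathrm{Ad}$ is a homomorphism, so that $\mathrm{Ad}$ of a product of exponentials is the corresponding product of matrix exponentials.

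First I would treat $A$: for $X \in \fa$, Lemma \ref{reflemma}(3) gives that $\mathrm{ad}(X)$ is the displayed diagonal matrix with entries $\lambda_i(X)$, so $\mathrm{Ad}(\exp X) = \exp(\mathrm{ad} X)$ is diagonal with entries $e^{\lambda_i(X)}$ (and $1$'s in the $d_0$ block since $0I_{d_0}$ exponentiates to $I_{d_0}$), which is exactly the claimed formula; since $A = \exp(\fa)$ this handles all of $\mathrm{Ad}(A)$. Next, for $K$: by Lemma \ref{reflemma}(1) the matrices of $\mathrm{ad}\,\fk$ are skew-symmetric, hence their exponentials are orthogonal; since $K$ is connected it is generated by $\exp(\fk)$, and a product of orthogonal matrices is orthogonal, giving (1). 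Finally, for $N$: by Lemma \ref{reflemma}(2) the matrices of $\mathrm{ad}\,\fn$ are strictly upper triangular, hence nilpotent, so their exponentials are unipotent upper triangular (i.e. upper triangular with $1$'s on the diagonal); since $N = \exp(\fn)$ is connected and the product of two unipotent upper triangular matrices is again unipotent upper triangular, we get (2).

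The only genuinely delicate point is the claim that $N = \exp(\fn)$ and $K = \exp(\fk)$ in the sense needed — i.e. that every element of $N$ is a single exponential (true since $\fn$ is nilpotent, so $\exp:\fn\to N$ is a diffeomorphism) and that every element of $K$ is a product of exponentials from $\fk$ (true since $K$ is connected). Neither requires real work beyond citing standard structure theory; one should just be slightly careful that $\mathrm{Ad}$ here denotes the adjoint representation of $G$ realized on $\fg$ in the fixed basis, so that "$\mathrm{Ad}(\exp X) = \exp(\mathrm{ad} X)$" is literally an identity of matrices in that basis, and that the block structure of the displayed matrix is preserved under the relevant operations. Thus there is no real obstacle; the corollary is an immediate formal consequence of Lemma \ref{reflemma} and the compatibility of $\exp$ with $\mathrm{ad}$/$\mathrm{Ad}$.
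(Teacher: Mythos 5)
Your proof is correct and is exactly the standard exponentiation argument ($\mathrm{Ad}(\exp X)=\exp(\mathrm{ad}X)$ applied to each part of Lemma \ref{reflemma}, using connectedness of $K$ and surjectivity of $\exp$ on $\fn$ and $\fa$) that the paper treats as immediate and leaves implicit. No gaps.
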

We will fix such a basis once and for all and use it to identify $\textrm{ad}(\fg)$ and $\textrm{Ad}(G)$ with the corresponding matrix groups. For future reference, we prove the following short lemma.
\begin{Lem}\label{Lem 2}\label{l:nilpotnent}
For any nonzero $X\in\fg$
%for which $\textrm{ad}(X)$ is nilpotent,
we have that  $\left(\textrm{ad}(X)\right)^2\neq 0$.
\end{Lem}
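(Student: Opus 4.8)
The plan is to use the explicit matrix model for $\mathrm{ad}(\fg)$ provided by Lemma \ref{reflemma} and Corollary \ref{cor 1}, together with the fact that every nonzero element of $\fg$ is $\mathrm{Ad}(G)$-conjugate to an element whose "highest" component is visible. First I would reduce to a normal form: given nonzero $X\in\fg$, I claim we may assume, after applying a suitable $\mathrm{Ad}(k)$ for $k\in K$ and possibly replacing $X$ by a nonzero scalar multiple, that the component of $X$ in $\fn\oplus\fa$ (with respect to the Iwasawa decomposition $\fg=\fn\oplus\fa\oplus\fk$) is nonzero; equivalently that $X\notin\mathrm{Ad}(K)\fk$ is the generic situation, and the remaining case $X\in\fk$ must be handled separately. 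Actually the cleanest route is the $KAK$-type decomposition at the Lie-algebra level: every nonzero $X$ is conjugate under $\mathrm{Ad}(K)$ to an element of $\fn\oplus\fa$ when $X$ is not elliptic, and the elliptic case ($X$ conjugate into $\fk$) needs its own argument.

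The key computational step is this: suppose $X\in\fn\oplus\fa$ is nonzero, write $X = X_\fa + \sum_{\lambda\in\Phi^+}X_\lambda$ with $X_\lambda\in\fg_\lambda$. If $X_\fa\neq 0$, pick a positive root $\lambda$ (there is at least one since $\mathrm{rank}_\R G\geq 1$, as $G$ has no compact factors) with $\lambda(X_\fa)\neq 0$; then on the root space $\fg_\lambda$, which is nonzero, $\mathrm{ad}(X_\fa)$ acts as the nonzero scalar $\lambda(X_\fa)$, so already $\mathrm{ad}(X_\fa)^2\neq 0$ there, and since $\mathrm{ad}(X) = \mathrm{ad}(X_\fa) + \mathrm{ad}(\sum X_\lambda)$ is block-upper-triangular in the basis of Lemma \ref{reflemma} with the $\mathrm{ad}(X_\fa)$ part on the diagonal, the square is again upper triangular with diagonal $\mathrm{ad}(X_\fa)^2\neq 0$, hence $\mathrm{ad}(X)^2\neq 0$. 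If instead $X_\fa = 0$ but some $X_\lambda\neq 0$, let $\lambda_0$ be the largest root (in the total order \eqref{e:order}) with $X_{\lambda_0}\neq 0$; using the bracket relations $[\fg_{\lambda_0},\fg_\mu]\subseteq\fg_{\lambda_0+\mu}$ and the fact that $[\fg_{\lambda_0},\fg_{-\lambda_0}]\neq 0$ (a standard $\fsl_2$-fact, true since $\lambda_0$ is a genuine restricted root), one shows $\mathrm{ad}(X_{\lambda_0})^2\neq 0$ by exhibiting a vector in $\fg_{-\lambda_0}$ not killed by the square, and then a maximality/degree argument in the $\fa$-grading shows the contribution of the other $X_\lambda$ cannot cancel the top-degree term of $\mathrm{ad}(X)^2$. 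Finally, for $X\in\fk$ nonzero (elliptic case): $\mathrm{ad}(X)$ is skew-symmetric and nonzero by Corollary \ref{cor 1}, so $\mathrm{ad}(X)^2 = -\mathrm{ad}(X)^t\mathrm{ad}(X)$ has trace $-\|\mathrm{ad}(X)\|_{HS}^2 < 0$, hence is nonzero; this also disposes of any $X$ conjugate into $\fk$.

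I expect the main obstacle to be the case $X_\fa = 0$ with all the weight of $X$ spread across positive root spaces: there one genuinely needs to argue that the highest-weight piece $\mathrm{ad}(X_{\lambda_0})^2$ survives in $\mathrm{ad}(X)^2$, i.e. that no cancellation occurs with cross terms $\mathrm{ad}(X_{\lambda_0})\mathrm{ad}(X_\mu) + \mathrm{ad}(X_\mu)\mathrm{ad}(X_{\lambda_0})$ and $\mathrm{ad}(X_\mu)\mathrm{ad}(X_\nu)$ for $\mu,\nu < \lambda_0$ — but this follows by a clean bookkeeping argument with the $\fa$-grading since all cross terms raise the grading strictly more or map to strictly different graded pieces than the term $\mathrm{ad}(X_{\lambda_0})\mathrm{ad}(X_{\lambda_0})\colon\fg_{-\lambda_0}\to\fg_{\lambda_0}$, together with the observation that $\mathrm{ad}(X_{\lambda_0})^2$ is nonzero on $\fg_{-\lambda_0}$ because $[X_{\lambda_0},[X_{\lambda_0},Y]]$ generates along an $\fsl_2$-triple. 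A secondary subtlety is making sure the reduction to $X\in(\fn\oplus\fa)\cup\fk$ is justified; this is exactly the statement that a real semisimple Lie algebra element is either elliptic or has a nonzero hyperbolic+nilpotent part in a conjugate, which follows from the complete multiplicative (or additive) Jordan decomposition together with the $\mathrm{Ad}(K)$-conjugacy of Cartan subspaces, and in any case one can avoid invoking it by simply noting that $X\notin\fk$ forces $X$ to have nonzero projection onto $\fp$, then conjugating that projection into $\fa$ and tracking the nilpotent remainder.
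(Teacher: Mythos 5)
Your proposal takes a completely different and far more elaborate route than the paper, and as written it has two genuine gaps. First, the reduction to $X\in(\fn\oplus\fa)\cup\fk$ is not available: a general element has a complete Jordan decomposition $X=X_{\fk}+X_{\fa}+X_{\fn}$ in which all three commuting parts can be simultaneously nonzero (e.g.\ an element of $\fa\oplus\mathfrak{m}$ with both components nonzero), so $X$ need not be conjugate into $\fn\oplus\fa$ nor into $\fk$; your suggested repair --- conjugate the $\fp$-projection into $\fa$ and ``track the remainder'' --- does not work, because $\mathrm{Ad}(k)$ does not act separately on the $\fk$- and $\fp$-components of $X$. Second, in the genuinely hard case $X\in\fn$, the ``clean bookkeeping'' is asserted rather than proved. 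A cross term $\mathrm{ad}(X_\mu)\mathrm{ad}(X_\nu)$ maps $\fg_{-\lambda_0}$ into the \emph{same} graded piece $\fg_{\lambda_0}$ whenever $\mu+\nu=2\lambda_0$, and such triples of positive roots do exist in non-simply-laced restricted root systems (e.g.\ $\alpha_1+(\alpha_1+2\alpha_2)=2(\alpha_1+\alpha_2)$ in $B_2$). Your maximality assumption only excludes them when $\mu$ or $\nu$ dominates $\lambda_0$ in the \emph{partial} order; since the total order \eqref{e:order} merely refines the partial order, ``$\mu$ comes after $\lambda_0$ in the total order'' does not give $\mu(X)\leq\lambda_0(X)$ on $\fa^+$, so the case of roots incomparable to $\lambda_0$ is not covered. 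This can likely be fixed by choosing $\lambda_0$ to maximize $\lambda(X_{\mathrm{reg}})$ for a generic regular $X_{\mathrm{reg}}$ in the interior of $\fa^+$, but that argument is not in your write-up.

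You should also notice the observation that collapses almost all of your cases: if $\mathrm{ad}(X)^2=0$ then $\mathrm{ad}(X)$ is nilpotent, so the statement is trivial unless $\mathrm{ad}(X)$ is nilpotent --- this disposes of your elliptic case, your $X_\fa\neq0$ case, and all the mixed cases that your reduction misses, with no matrix model needed. This is exactly how the paper argues: the only case with content is $\mathrm{ad}(X)$ nilpotent, and there the Jacobson--Morozov theorem (\cite[Theorem 7.4, p.~432]{Helgason78}) embeds $X$ in an $\fsl_2$-triple $[H,X]=2X$, $[H,Y]=-2Y$, $[X,Y]=H$, whence $\mathrm{ad}(X)^2(Y)=[X,[X,Y]]=[X,H]=-2X\neq0$. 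That two-line argument replaces your entire highest-root analysis; the only other input is that $\mathrm{ad}(X)\neq0$ for $X\neq0$, which holds because $\fg$ is semisimple.
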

\begin{proof}
It suffices to show that $\textrm{ad}(X)\circ\textrm{ad}(X)$ is nontrivial as an endomorphism of $\fg$. If $\textrm{ad}(X)$ is not nilpotent there is nothing to show. If it is nilpotent,  by \cite[Theorem 7.4, p. 432]{Helgason78} there exist elements $H, Y\in\fg$ such that
$$[H,X]=2X,\qquad [H,Y]=-2Y,\qquad [X,Y]=H.$$
Applying $\textrm{ad}(X)\circ\textrm{ad}(X)$ to $Y$ we get $$\left(\textrm{ad}(X)\circ\textrm{ad}(X)\right)(Y)= \left[X,[X,Y]\right]=[X,H]= -2X\neq 0,$$
thus completing the proof.
\end{proof}

\subsection{Cartan decomposition of one-parameter subgroups}
We now consider the case where $G$ a simple, that is $\fg$ is simple as a real Lie algebra, and study the Cartan decomposition of a one-parameter subgroup $\{h_t=\exp(tX_0)\}$, with $X_0\in\fg$.

Fix an order $\lambda_1\geq \lambda_2\geq \ldots\geq \lambda_L$ on the set of positive roots $\Phi^+$ as in \eqref{e:order}. Since $G$ is simple, $\Phi$ is irreducible and the root $\lambda_1\in \Phi^+$, is the highest root, characterized by the property that $\lambda_1(X)\geq \lambda(X)$ for any $\lambda\in \Phi^+$ and $X\in \fa^+$.  Recalling the Cartan decomposition $G=KA^+K$, we can write
\begin{equation}
\label{cartan}h_t= k_1(t)\exp\left(X(t)\right)k_2(t)
\end{equation}
with $k_i(t)\in K$ and $X(t)\in \fa^{+}$, and note that $X(t)\in\fa^+$ is uniquely determined by $(\ref{cartan})$. Now, by the complete additive Jordan decomposition, there exist three pairwise commuting elements $X_{\fn}$, $X_{\fa}$ and $X_{\fk}$ in $\fg$ such that $X_0=X_{\fn}+X_{\fa}+X_{\fk}$ with $\textrm{ad}(X_{\fn}), \textrm{ad}(X_{\fa})$ and $\textrm{ad}(X_{\fk})$ $\textrm{Ad}(G)$-conjugate to elements in $\textrm{ad}(\fn), \textrm{ad}(\fa)$ and $\textrm{ad}(\fk)$ respectively.
%Note that the flow being unbounded means exactly that $X_{\fn}+X_{\fa}\neq 0$.
We say a one-parameter subgroup is \emph{quasi-diagonalizable} if $X_{\fa}\neq 0$, and that is \emph{quasi-unipotent} if $X_{\fa}=0$ and $X_{\fn}\neq 0$ (note that any unbounded subgroup is either quasi-diagonalizable or quasi-unipotent). We first prove the following:

\begin{Prop}\label{mprop}
Given an unbounded one-parameter subgroup $h_t= k_1(t)\exp\left(X(t)\right)k_2(t)
$, if it is quasi-diagonalizable then
there exists some constant $c>0$ such that
$$e^{\lambda_1(X(t))}\gg e^{ct},$$
and if it is quasi-unipotent then there exists some integer $l\geq 2$ such that
$$e^{\lambda_1(X(t))}\asymp |t|^l.$$
%where the implied constants may depend on the flow but not on $t$.
\end{Prop}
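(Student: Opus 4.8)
The plan is to analyze the Cartan projection $X(t)\in\fa^+$ of $h_t=\exp(tX_0)$ by understanding the growth of the operator norm of $\mathrm{Ad}(h_t)=\exp(t\,\mathrm{ad}(X_0))$, and then relating that norm to $e^{\lambda_1(X(t))}$ via the matrix model of Corollary \ref{cor 1}. Indeed, in the basis fixed after Corollary \ref{cor 1}, $\mathrm{Ad}(h_t)=\mathrm{Ad}(k_1(t))\,\mathrm{Ad}(\exp X(t))\,\mathrm{Ad}(k_2(t))$ with the outer factors orthogonal and the middle factor diagonal with largest entry $e^{\lambda_1(X(t))}$ (this is the largest entry because $\lambda_1$ is the highest root and $X(t)\in\fa^+$). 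Hence $\|\mathrm{Ad}(h_t)\|_{\mathrm{op}}=e^{\lambda_1(X(t))}$, so it suffices to pin down the growth of $\|\exp(t\,\mathrm{ad}(X_0))\|_{\mathrm{op}}$ as $|t|\to\infty$.

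Next I would use the complete additive Jordan decomposition $X_0=X_\fn+X_\fa+X_\fk$ recorded just before the statement, with the three summands commuting and $\mathrm{ad}(X_\fn)$ nilpotent, $\mathrm{ad}(X_\fa)$ $\mathrm{Ad}(G)$-conjugate into $\mathrm{ad}(\fa)$ (hence $\R$-diagonalizable), $\mathrm{ad}(X_\fk)$ conjugate into $\mathrm{ad}(\fk)$ (hence with purely imaginary eigenvalues, bounded semisimple). Since the three commute, $\exp(t\,\mathrm{ad}(X_0))=\exp(t\,\mathrm{ad}(X_\fn))\exp(t\,\mathrm{ad}(X_\fa))\exp(t\,\mathrm{ad}(X_\fk))$, and the last factor is bounded in norm uniformly in $t$. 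In the \emph{quasi-diagonalizable} case $X_\fa\neq 0$: then $\mathrm{ad}(X_\fa)$ has a nonzero real eigenvalue (it is conjugate into $\mathrm{ad}(\fa^+)$ after possibly using the Weyl group, and a nonzero element of $\fa$ has some nonzero root value), and commuting the nilpotent and bounded parts in, one gets $\|\exp(t\,\mathrm{ad}(X_0))\|\gg e^{ct}$ for $c$ the largest real part of an eigenvalue of $\mathrm{ad}(X_0)$, which is positive; combined with the first paragraph this gives $e^{\lambda_1(X(t))}\gg e^{ct}$.

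In the \emph{quasi-unipotent} case $X_\fa=0$, $X_\fn\neq 0$, so $\mathrm{ad}(X_0)=\mathrm{ad}(X_\fn)+\mathrm{ad}(X_\fk)$ with $\mathrm{ad}(X_\fn)$ a nonzero nilpotent commuting with the bounded semisimple $\mathrm{ad}(X_\fk)$. Then $\exp(t\,\mathrm{ad}(X_0))=\exp(t\,\mathrm{ad}(X_\fk))\sum_{j\ge 0}\frac{t^j}{j!}\mathrm{ad}(X_\fn)^j$ is, entrywise, polynomial in $t$ times bounded oscillating functions, so $\|\exp(t\,\mathrm{ad}(X_0))\|\asymp |t|^{l'}$ where $l'$ is the nilpotency degree exponent, i.e. the largest $j$ with $\mathrm{ad}(X_\fn)^j\neq 0$ — here one must be a little careful that the oscillation coming from $\mathrm{ad}(X_\fk)$ does not cause cancellation that lowers the exponent; this is where one invokes that leading coefficients of distinct entries cannot all simultaneously vanish, using that $\mathrm{ad}(X_\fk)$ preserves norms so the top-degree part $\frac{t^{l'}}{l'!}\exp(t\,\mathrm{ad}(X_\fk))\mathrm{ad}(X_\fn)^{l'}$ has norm exactly $\frac{|t|^{l'}}{l'!}\|\mathrm{ad}(X_\fn)^{l'}\|\neq 0$, dominating the lower-order terms. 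Finally, since $X_0\neq 0$ one has $\mathrm{ad}(X_0)\neq 0$, and more to the point Lemma \ref{l:nilpotnent} gives $\mathrm{ad}(X_0)^2\neq 0$; as $\mathrm{ad}(X_0)$ differs from the nilpotent $\mathrm{ad}(X_\fn)$ by the commuting bounded semisimple part, a short argument shows $l'\geq 2$ as well (equivalently $\mathrm{ad}(X_\fn)^2\neq 0$), giving the integer $l=l'\geq 2$ with $e^{\lambda_1(X(t))}\asymp|t|^l$. The main obstacle I anticipate is the lower bound in the quasi-unipotent case — ensuring no oscillatory cancellation degrades the polynomial degree — and cleanly transferring norm estimates for $\mathrm{Ad}(h_t)$ back to the single exponent $\lambda_1(X(t))$ rather than a sum of root contributions; both are handled by the explicit diagonal shape in Corollary \ref{cor 1} and the maximality of $\lambda_1$.
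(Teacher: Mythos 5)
Your proof is correct and follows essentially the same route as the paper: both reduce the problem to the growth of a matrix norm of $\mathrm{Ad}(h_t)$, identify that norm with $e^{\lambda_1(X(t))}$ via the $KAK$ decomposition and Corollary \ref{cor 1}, and then analyze the norm through the commuting Jordan factors $X_\fn, X_\fa, X_\fk$ (the paper uses the Hilbert--Schmidt norm and exhibits an explicit matrix entry of size $e^{ct}P(t)$ in the quasi-diagonalizable case, whereas you use the operator norm and a spectral-radius lower bound; both work). One small simplification: to get $l\geq 2$ you should apply Lemma \ref{l:nilpotnent} directly to the nonzero element $X_\fn$ (which immediately gives $\mathrm{ad}(X_\fn)^2\neq 0$), rather than to $X_0$ followed by a transfer argument --- the implication $\mathrm{ad}(X_0)^2\neq 0\Rightarrow\mathrm{ad}(X_\fn)^2\neq 0$ is not automatic, and the detour is unnecessary.
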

\begin{proof}
For any $g\in G$, let $\|g\|^2:=\textrm{tr}\left(\textrm{Ad}(g)^t\textrm{Ad}(g)\right)$. That is, $\|g\|^2$ is the Hilbert-Schmidt norm of $\textrm{Ad}(g)$, and equals the sum of squares of entries of $\textrm{Ad}(g)$.
By Cauchy-Schwartz, $\|g_1g_2\|\leq \|g_1\|\|g_2\|$ for any $g_1, g_2\in G$, so in particular, for any fixed $g_0\in G$ and all $h\in G$ we have
\begin{equation}\label{hsb}
\|g_0^{-1}hg_0\|\asymp_{g_0}\|h\|.
\end{equation}

Now, on one hand, in view of $(\ref{cartan})$ and Corollary \ref{cor 1},
$$\|h_t\|^2=\textrm{tr}\left(\textrm{Ad}\left(\exp(X(t))\right)^2\right)=\sum_{i=1}^L d_i\left(e^{2\lambda_i\left(X(t)\right)}+ e^{-2\lambda_i\left(X(t)\right)}\right)+d_0,$$
and since $X(t)\in \fa^+$ we have that $\lambda_1\left(X(t)\right)\geq \lambda_i\left(X(t)\right)\geq 0$ for all  $\lambda_i\in \Phi^+$, hence,
$$\|h_t\|^2\asymp e^{2\lambda_1\left(X(t)\right)}.$$
%where the implied constant only depends on $\dim\fg$.
On the other hand, since $X_{\fn}, X_{\fa}$ and $X_{\fk}$ are pairwise commuting,
$$\|h_t\|=\|\exp(tX_0)\|=\|\exp(tX_{\fn})\exp(tX_{\fa})\exp(tX_{\fk})\|.$$
Moreover, since $\textrm{ad}(X_{\fk})$ is $\textrm{Ad}(G)$-conjugate to some element in $\textrm{ad}(\fk)$, the one-parameter subgroup $\{\exp(tX_{\fk})\}_{t\in\R}$ is compact, and $\{\|\exp(tX_{\fk})\|\}_{t\in \R}$ is uniformly bounded from above (by constants depending only on the generator $X_0$). Hence by Cauchy-Schwartz
$$\|h_t\|\asymp \|\exp(tX_{\fn})\exp(tX_{\fa})\|.$$
If $X_{\fa}$ is nonzero, then there exists some $g\in G$ such that $\textrm{Ad}(g)^{-1}\mathrm{ad}(X_{\fa})\textrm{Ad}(g)=\mathrm{ad}(X_{\fa}')$ with $X_{\fa}'\in \fa$ nonzero. Thus by $(\ref{hsb})$,
$$\|h_t\|^2\asymp \|g^{-1}\exp(tX_{\fn})g\exp(tX_{\fa}')\|^2.$$
Since $\textrm{ad}(X_{\fn})$ is nilpotent, the entries of $\textrm{Ad}(g^{-1}\exp(tX_{\fn})g)$ are all polynomials in $t$ and since it is invertible, each row has at least one nonzero entry. Since $X_{\fa}'\in \fa$, there exists some root $\lambda\in \Phi^+$ such that $|\lambda(X_{\fa}')|=c>0$. In view of Corollary \ref{cor 1}, the matrix $\textrm{Ad}\left(g^{-1}\exp(tX_{\fn})g\exp(tX_{\fa}')\right)$ has at least one entry of the form $e^{ct}P(t)$, where $P(t)$ is some nontrivial polynomial in $t$. Hence in this case, for all sufficiently large $t$,
$$e^{2\lambda_1(X(t))}\asymp \|h_t\|^2\geq e^{2ct}.$$
If $X_{\fa}=0$, then $X_{\fn}$ is nonzero. Let $l$ be the unique integer such that $\left(\textrm{ad}(X_{\fn})\right)^{l}\neq 0$ and $\left(\textrm{ad}(X_{\fn})\right)^{l+1}= 0$ and note that $l\geq 2$ by Lemma \ref{l:nilpotnent}. In this case, the entries of $\textrm{Ad}\left(\exp(tX_{\fn})\right)$ are all polynomials in $t$ with degree less than or equal to $l$, and there exists some entry with degree exactly $l$. Hence in this case, for all sufficiently large $t$
\begin{displaymath}
e^{2\lambda_1(X(t))}\asymp \|h_t\|^2\asymp \|\exp(tX_{\fn})\|^2\asymp t^{2l}.\qedhere
\end{displaymath}
%{\color{red}{Finally, we have $l\geq 2$ by Lemma \ref{Lem 2}.}}
\end{proof}

\subsection{Simple groups of real rank one}
We recall that, up to local isomorphism, a rank one group $G$ is in one of the following four families of groups: $\SO(d,1), \SU(d,1)$, $\Sp(d,1),$ with $d\geq 2$ and $\mathrm{F}_4^{-20}$. In these cases, the positive restricted root system $\Phi_{\R}^{+}(\fa,\fg)$ consists of one or two elements. Let $\alpha$ be the unique element in $\Phi_{\R}^{+}(\fa,\fg)$ such that $\frac12 \alpha\notin \Phi_{\R}^{+}(\fa,\fg)$. Let $p$ be the dimension of $\fg_{\alpha}$, $q$ be the dimension of $\fg_{2\alpha}$ and $\rho=\frac12(p+2q)\alpha$ be half the sum of the positive roots with multiplicities. Explicitly we have that $p, q$ and $\rho$ are as follows:
\begin{displaymath}
\begin{tabular}{| c | c | c | c | c | }
  \hline			
   & $SO(d,1)$ & $SU(d,1)$ & $Sp(d,1)$ & $F_4^{-20}$ \\
  \hline
  $p$ & $d-1$ & $2(d-1)$ &$4(d-1)$ & $8$\\
  \hline
  $q$ & $0$ & $1$ & $3$ & $7$\\
  \hline
  $\rho$ & $\frac{d-1}{2}\alpha$ & $d\alpha$ & $(2d+1)\alpha$ & $11\alpha$\\
  \hline
\end{tabular}
\end{displaymath}
Let $\fa_{\bbC}^{\ast}$ be the complexified dual of $\fa$ and fix $X_1$ to be the unique element in $\fa$ such that $\alpha(X_1)=1$. We identify $\fa_{\bbC}^{\ast}$ with $\bbC$ via their values at $X_1$. Denote by $\widehat{G}$ the unitary dual of $G$ and $\widehat{G}_{K}$ the spherical unitary dual. The spherical unitary dual can be parameterized by $\fa_{\bbC}^{\ast}/W$ where $W$ is the Weyl group. Let $\rho_0\in \fa_{\bbC}^{\ast}$ be defined by
\[\rho_0 = \left\{
  \begin{array}{lr}
    \rho & \textrm{$\fg=\mathfrak{so}(d,1)$ or $\mathfrak{su}(d,1)$}\\
    \rho-2\alpha & \textrm{$\fg= \mathfrak{sp}(d,1)$}\\
    \rho-6\alpha & \textrm{$\fg= \mathfrak{f}_4^{-20}.$}
  \end{array}
\right.
\]
Then with the above identification between $\fa_{\bbC}^{\ast}$ and $\bbC$ we have the parametrization
$$\widehat{G}_{K}=\{\pi_s\ |\ s\in i\R_{\geq0} \cup (0, \rho_0)\}\cup \{\pi_{\rho}\},$$
where the representations $\pi_s, s\in i\R_{\geq 0}$ are the (tempered) principal series representations, the representations $\pi_s, s\in (0, \rho_0)$ are the (non-tempered) complementary series (cf. \cite{Kostant69}), and $\pi_{\rho}$ is the trivial representation. We note that in each representation $\pi_s\in \widehat{G}_{K}$ there is a unique (up to scaling) spherical vector.

For any lattice $\G\leq G$ consider the right regular representation of $G$ on $L^2(\G\bk G)$. We denote by
$L^2_{\mathrm{temp}}(\G\bk G)$ the subspace that weakly contains only tempered representations. We then have a spectral decomposition for any spherical $f\in L^2(\G\bk G)$
\begin{equation}
\label{sdc}f= \langle f,1\rangle +\sum_{k}\langle f,\varphi_k\rangle \varphi_k + f_{\textrm{temp}},
\end{equation}
with $f_{\textrm{temp}}\in L^2_{\textrm{temp}}(\G\bk G)$ and $\varphi_k\in\pi_{s_k}$ with $s_k\in (0,\rho_0)$.

After identifying spherical functions in $L^2(\G\bk G)$ with functions in $L^2(\G\bk \cH)$, the vectors $\varphi_k\in\pi_{s_k}$ occurring in this decomposition are the exceptional Laplacian eigenfunction in $L^2(\G\bk \cH)$, with corresponding eigenvalues $\rho^2-s_k^2$.
In particular, the \emph{spectral gap} for $\G$, i.e., the gap between the trivial eigenvalue and the first non-trivial eigenvalue of the Laplacian on $L^2(\G\bk \cH)$, is  $\tau(2\rho-\tau)$ where the parameter $\tau=\tau(\G)$ is  given by
 \begin{equation}\label{e:SpectralGap}
 \tau(\Gamma):=\min_{k}(\rho-s_k).
 \end{equation}

The (spherical) exceptional forms $\varphi_k\in \pi_{s_k}$ above are either cusp forms (vanishing at all cusps) or residual forms (obtained as residues of Eisenstein series).  The exceptional cusp forms are uniformly bounded, but the residual forms can blow up at the cusps.
To control their growth we recall the following result from  \cite[Lemma 7]{Kelmer17b} on the $L^p$ norms of exceptional forms.\footnote{The result in  \cite{Kelmer17b} is stated for $G=\SO_0(d,1)$ but the proof is identical.}
\begin{Prop}
\label{nE} For any spherical exceptional form $\varphi_k\in \pi_{s_k}$ with $s_k\in(0,\rho_0)$ we have that  $\varphi_k\in L^p(\G\bk G)$ for any $p< \frac{2\rho}{\rho-s_k}$.
\end{Prop}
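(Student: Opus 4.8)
The plan is to split the analysis according to whether $\varphi_k$ is a cusp form or a residual form. For a (spherical) exceptional cusp form, rapid decay at the cusps gives that $\varphi_k$ is bounded, hence lies in $L^p(\G\bk G)$ for every $p$, and there is nothing to prove. So the real content is the residual case, where $\varphi_k$ arises (up to scalar) as the residue of an Eisenstein series $E(x,s)$ at the point $s=s_k\in(0,\rho_0)$ on the real axis, and one must control how fast $\varphi_k$ blows up in each cusp. First I would reduce to a single cusp $\mathfrak{c}$: since $\G\bk\cH$ has finitely many cusps and is of finite volume, membership in $L^p$ is a local question in each cuspidal neighborhood (the complement being compact), and it suffices to estimate $\int_{\text{cusp nbhd}} |\varphi_k|^p\, d\mu$ for each cusp separately.

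Next I would use the explicit description of the residual form in the cusp. Passing to the upper-half-space model for the rank one symmetric space and to the horospherical coordinates attached to $\mathfrak{c}$ via the Iwasawa decomposition $G = NAK$, the residue of the Eisenstein series has a constant term (the $N$-invariant average) whose leading behavior as one goes out the cusp is governed by the spherical parameter: it grows like the exponential of $(\rho + s_k)$ times the $A$-coordinate — concretely, in the variable $y$ measuring depth in the cusp, like $y^{\rho+s_k}$ after the usual normalization (here I am using the identification of $\fa_\bbC^*$ with $\bbC$ via values at $X_1$, so $\rho$ and $s_k$ are scalars). More precisely, writing the invariant measure on the cuspidal neighborhood in these coordinates as $\asymp \frac{dy}{y^{2\rho+1}}\,du$ over a fixed compact set of horospherical $u$'s for $y$ large, and using that the non-constant Fourier coefficients of the residual form decay rapidly in $y$, one gets that the dominant contribution to $\int |\varphi_k|^p$ in the cusp is
\[
\int_{y\ \mathrm{large}} y^{p(\rho+s_k)}\,\frac{dy}{y^{2\rho+1}} \;=\; \int_{y\ \mathrm{large}} y^{\,p(\rho+s_k)-2\rho-1}\,\frac{dy}{1},
\]
which converges precisely when $p(\rho+s_k) - 2\rho - 1 < -1$, i.e. when $p(\rho+s_k) < 2\rho$, i.e. $p < \frac{2\rho}{\rho - s_k}$ after noting the functional-equation symmetry $s_k \leftrightarrow -s_k$ relating the two exponents $\rho\pm s_k$ (the residue at $s_k$ and at $-s_k$ differ by the scattering factor, and one takes the one producing the genuine $L^2$ form, whose growth exponent is $\rho - s_k$ in the depth variable normalized so that the $L^2$-threshold is $p=2$). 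This is exactly the claimed range.

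The main obstacle I expect is bookkeeping the normalization so that the threshold comes out as stated: one must be careful about (i) which of $\rho\pm s_k$ is the correct growth rate of the $L^2$-normalized residual form — this is pinned down by demanding $\varphi_k\in L^2$, which forces the exponent to be $\rho-s_k<\rho$, consistent with $s_k\in(0,\rho_0)\subseteq(0,\rho)$; (ii) the precise power of $y$ in the Haar measure on a horoball, which depends on the convention for $\rho$ as half the sum of positive roots with multiplicity, $\rho=\frac12(p+2q)\alpha$, and on how $\alpha$ is normalized via $\alpha(X_1)=1$; and (iii) the uniformity over all cusps and the harmless compact part. Controlling the non-constant-term contribution is comparatively routine: the Fourier expansion of a residual Eisenstein series has non-constant coefficients built from $K$-Bessel-type functions that decay exponentially in $y$, so those terms lie in $L^p$ for all $p$ and do not affect the threshold. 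Since the statement and its proof are identical in form to \cite[Lemma 7]{Kelmer17b} with $\SO_0(d,1)$ replaced by a general rank one $G$ — the only inputs being the rank one root structure, the parametrization of $\widehat{G}_K$ above, and the theory of Eisenstein series on $\G\bk\cH$ — I would present the rank one case uniformly, carrying the parameters $p,q,\rho,\rho_0$ symbolically throughout.
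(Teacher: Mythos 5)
Your proposal is correct and is essentially the intended argument: the paper offers no proof of Proposition \ref{nE} at all, simply citing \cite[Lemma 7]{Kelmer17b} with a footnote that the $\SO_0(d,1)$ proof carries over verbatim, and that proof is precisely your dichotomy (exceptional cusp forms are bounded, hence in every $L^p$; residual forms are controlled in each cuspidal neighborhood by the residue of the Eisenstein constant term, which grows like $y^{\rho-s_k}$ against the cuspidal measure $\asymp y^{-2\rho-1}\,dy\,du$, the non-constant Fourier modes decaying rapidly). The one thing to clean up is your displayed integral: the correct growth exponent of the $L^2$-normalized residual form is $\rho-s_k$ from the outset (exactly as your consistency check in item (i) shows), so the integrand should be $y^{p(\rho-s_k)-2\rho-1}$, which yields $p<\tfrac{2\rho}{\rho-s_k}$ directly; as written, the inequality $p(\rho+s_k)<2\rho$ gives only the smaller range $p<\tfrac{2\rho}{\rho+s_k}$, and the subsequent appeal to the functional equation is a patch rather than a derivation.
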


We now turn to estimate the decay of matrix coefficients for flows on $L^2(\G\bk G)$ for $G$ a rank one group.
For each $s\in i\R_{\geq 0}\cup (0, \rho_0)$ consider the spherical function defined by
$$\phi_s(g)= \langle\pi_s(g)v, v\rangle$$
where $v\in V_{\pi_s}$ is the unique unit spherical vector. For any $\epsilon> 0$, $\phi_s$ decays like
\begin{equation}
\label{pd}|\phi_s(\exp(tX_1))|\ll_{\epsilon} e^{-\rho(1-\epsilon)|t|}
\end{equation}
if $s\in i\R_{\geq 0}$ and
\begin{equation}
\label{cd}|\phi_s(\exp(tX_1))|\ll_{\epsilon} e^{-(\rho-s)(1-\epsilon)|t|}
\end{equation}
if $s\in (0,\rho_0)$. We note that $(\ref{pd})$ follows from the asymptotic behavior of the Harish-Chandra $\Xi$ function (cf. \cite[section $7$]{Howe82}), while for $(\ref{cd})$, we refer to \cite[(5.1.18)]{RV12}. Let $\lambda_1\in\Phi_{\R}^{+}(\fa,\fg)$ be the highest root as before, and define $\kappa=\kappa(G)$ to be the unique integer such that $\alpha=\frac{\kappa}{2}\lambda_1$. Explicitly, $\kappa=2$ if $G$ is locally isomorphic to $SO(d,1)$ and $\kappa=1$ otherwise. Applying $(\ref{pd})$ and $(\ref{cd})$ to the regular representation $\left(\pi,  L^2_0(\G\bk G)\right)$, we get the following:

\begin{Prop}\label{rod}
%For any unbounded one-parameter group  $\{h_t\}_{t\in\R}\leq G$.
Let $G$ be a connected simple Lie group with finite center and real rank one, and let $\Gamma\leq G$ be a lattice in $G$. Let $\{h_t\}_{t\in \R}$ be an unbounded one-parameter subgroup of $G$. For any spherical tempered $\psi,\phi\in L^2_{\mathrm{temp}}(\G\bk G)$, for all $|t|\geq 1$ we have
\begin{equation}
\label{tede}\left|\langle\pi(h_t)\psi,\phi\rangle\right|\ll_{\epsilon} \frac{\|\psi\|_2\|\phi\|_2}{|t|^{\kappa\rho(1-\epsilon)}}.
\end{equation}
For any spherical non-tempered exceptional form $\varphi\in \pi_s$ with $s\in (0,\rho_0)$, for all $|t|\geq 1$ we have
\begin{equation}
\label{comde}|\langle\pi(h_t)\varphi,\varphi\rangle|\ll_{\epsilon}\frac{\|\varphi\|_2^2}{|t|^{\kappa(\rho-s)(1-\epsilon)}}.
\end{equation}
\end{Prop}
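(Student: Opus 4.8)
The plan is to control the matrix coefficient of two spherical vectors by the corresponding elementary spherical function $\phi_s$ evaluated on the Cartan projection of $h_t$, and then insert the decay estimates $(\ref{pd})$, $(\ref{cd})$ together with the growth of the Cartan projection furnished by Proposition \ref{mprop}. The key simplification coming from real rank one is that $\fa$ is one-dimensional, so the $A^{+}$-part $\exp(X(t))$ of the Cartan decomposition $(\ref{cartan})$ is completely determined by the single number $\lambda_1(X(t))$. Concretely, since $\phi_s$ is bi-$K$-invariant, $(\ref{cartan})$ gives $\phi_s(h_t)=\phi_s(\exp(X(t)))$; writing $X(t)=r(t)X_1$ with $r(t)=\alpha(X(t))\geq 0$ and using the relation $\alpha=\tfrac{\kappa}{2}\lambda_1$ one obtains $r(t)=\tfrac{\kappa}{2}\lambda_1(X(t))$, so that $(\ref{pd})$ and $(\ref{cd})$ turn into
\[
|\phi_s(h_t)|\ll_{\epsilon}\big(e^{\lambda_1(X(t))}\big)^{-\kappa\rho(1-\epsilon)/2}\quad(s\in i\R_{\geq 0}),
\]
and
\[
|\phi_s(h_t)|\ll_{\epsilon}\big(e^{\lambda_1(X(t))}\big)^{-\kappa(\rho-s)(1-\epsilon)/2}\quad(s\in(0,\rho_0)).
\]

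Next I would feed in Proposition \ref{mprop} (applied to $\{h_t\}$, and likewise to $\{h_{-t}\}$ for the negative times): in the quasi-diagonalizable case $e^{\lambda_1(X(t))}\gg e^{c|t|}$, and in the quasi-unipotent case $e^{\lambda_1(X(t))}\asymp|t|^{l}$ with $l\geq 2$. In both cases $e^{\lambda_1(X(t))}\gg|t|^{2}$ once $|t|$ is large (this is exactly where $l\geq 2$, i.e. Lemma \ref{l:nilpotnent}, enters in the unipotent case), while the remaining compact range of $|t|\geq 1$ is absorbed into the implied constant via the trivial bound $|\langle\pi(h_t)\psi,\phi\rangle|\leq\|\psi\|_2\|\phi\|_2$. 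Substituting $e^{\lambda_1(X(t))}\gg|t|^{2}$ into the two displayed estimates gives $|\phi_s(h_t)|\ll_{\epsilon}|t|^{-\kappa\rho(1-\epsilon)}$ for $s\in i\R_{\geq 0}$ and $|\phi_s(h_t)|\ll_{\epsilon}|t|^{-\kappa(\rho-s)(1-\epsilon)}$ for $s\in(0,\rho_0)$. The complementary series bound $(\ref{comde})$ is then immediate: a spherical $\varphi\in\pi_s$, $s\in(0,\rho_0)$, is a scalar multiple of the (unique up to scalar) spherical vector in its $G$-span, so $\langle\pi(h_t)\varphi,\varphi\rangle=\|\varphi\|_2^{2}\,\phi_s(h_t)$, and the estimate just obtained applies.

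For the tempered bound $(\ref{tede})$, given spherical $\psi,\phi\in L^2_{\mathrm{temp}}(\G\bk G)$ I would decompose the closed $G$-invariant subspace they generate as a direct integral of irreducible tempered spherical representations; as recalled in the discussion preceding $(\ref{sdc})$ these are precisely the unitary spherical principal series $\pi_s$ with $s\in i\R_{\geq 0}$, and in this decomposition $\psi,\phi$ are realized as measurable fields of scalar multiples of the spherical vectors, whence Cauchy--Schwarz on the spectral side yields $|\langle\pi(h_t)\psi,\phi\rangle|\leq\|\psi\|_2\|\phi\|_2\sup_{s\in i\R_{\geq 0}}|\phi_s(h_t)|$ (equivalently, one may quote that all $K$-bi-invariant matrix coefficients of a tempered representation are dominated by the Harish-Chandra $\Xi$-function, whose decay is $(\ref{pd})$ at $s=0$). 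Applying the uniform bound from the previous paragraph finishes the proof. The argument is a direct assembly of ingredients already in place; the only points requiring care are the factor $\kappa/2$ in $r(t)=\tfrac{\kappa}{2}\lambda_1(X(t))$ — responsible for the exponent $\kappa\rho$ rather than $\rho$ — and the use of $l\geq 2$, which is exactly what guarantees that the exponent $\tfrac{l}{2}\kappa(\rho-s)(1-\epsilon)$ actually produced in the unipotent case is at least the asserted $\kappa(\rho-s)(1-\epsilon)$ (and similarly with $\rho$ in place of $\rho-s$).
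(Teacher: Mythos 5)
Your proposal is correct and follows essentially the same route as the paper: Cartan decomposition, the identity $\alpha=\tfrac{\kappa}{2}\lambda_1$, and Proposition \ref{mprop} giving $e^{\lambda_1(X(t))}\gg t^2$, fed into \eqref{pd} and \eqref{cd}. The paper's proof is just a terser version of this; the direct-integral/$\Xi$-domination details you supply for the tempered case are exactly what the paper leaves implicit in the phrase ``applying \eqref{pd} and \eqref{cd} to the regular representation.''
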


\begin{proof}
Recall the Cartan decomposition $h_t=k_1(t)\exp(X(t))k_2(t)$ with $k_i(t)\in K$ and $X(t)\in \fa^{+}$. In view of $(\ref{pd})$ and $(\ref{cd})$, it suffices to show that $e^{\alpha(X(t))}\gg |t|^{\kappa}$. Recall that $\kappa$ is defined precisely such that $\alpha=\frac{\kappa}{2}\lambda_1$ where $\lambda_1$ is the highest root. By Proposition \ref{mprop}, $e^{\lambda_1(X(t))}\gg t^2$, hence $e^{\alpha(X(t))}\gg (t^2)^{\frac{\kappa}{2}}=|t|^{\kappa}$.
\end{proof}
Using Proposition \ref{rod} together with the spectral decomposition, we see that the rate of decay is controlled by the spectral gap parameter $\tau(\G)$ as follows:
\begin{Thm}
For $G$ and $\G$ as in Proposition \ref{rod}, for all sufficiently small $\epsilon>0$, for any spherical $f_1,f_2\in L^2_0(\G\bk G)$ and for all $|t|\geq 1$, we have
$$\left|\langle \pi(h_t)f_1,f_2\rangle\right|\ll_{\epsilon}\frac{\|f_1\|_2\|f_2\|_2}{|t|^{\kappa\tau(\Gamma)(1-\epsilon)}}.$$
\end{Thm}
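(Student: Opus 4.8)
The plan is to reduce the matrix coefficient to its exceptional and tempered components via the spectral decomposition \eqref{sdc}, and then invoke the two decay bounds of Proposition \ref{rod}. Given spherical $f_1,f_2\in L^2_0(\G\bk G)$, I would decompose each as in \eqref{sdc}; since $f_i\in L^2_0$ the constant term drops and
\[
f_i=\sum_k\langle f_i,\varphi_k\rangle\varphi_k+(f_i)_{\mathrm{temp}},
\]
where $\{\varphi_k\}$ is taken to be an orthonormal family of spherical vectors, each $\varphi_k$ spanning the spherical line of a distinct irreducible $G$-subspace $V_k\cong\pi_{s_k}$ with $s_k\in(0,\rho_0)$, the $V_k$ mutually orthogonal, and $(f_i)_{\mathrm{temp}}\in L^2_{\mathrm{temp}}(\G\bk G)$ spherical. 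Recall that there are only finitely many such $\varphi_k$.

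First I would exploit that matrix coefficients of $\pi$ between vectors lying in orthogonal $G$-invariant subspaces vanish: if $v\in V$ and $w\in W$ with $V\perp W$ both $\pi$-invariant, then $\pi(h_t)v\in V$, whence $\langle\pi(h_t)v,w\rangle=0$. Since $L^2_{\mathrm{temp}}(\G\bk G)$ is $\pi$-invariant and orthogonal to each $V_k$, and the $V_k$ are mutually orthogonal, expanding $\langle\pi(h_t)f_1,f_2\rangle$ bilinearly kills every cross term and leaves only
\[
\langle\pi(h_t)f_1,f_2\rangle=\sum_k\langle f_1,\varphi_k\rangle\overline{\langle f_2,\varphi_k\rangle}\,\langle\pi(h_t)\varphi_k,\varphi_k\rangle+\langle\pi(h_t)(f_1)_{\mathrm{temp}},(f_2)_{\mathrm{temp}}\rangle.
\]
Next I would bound each diagonal exceptional term by \eqref{comde} (using $\|\varphi_k\|_2=1$) and the tempered term by \eqref{tede}; since $\rho-s_k\geq\tau(\G)$ by \eqref{e:SpectralGap} and $\rho\geq\tau(\G)$, every term is $O_\epsilon\!\left(|t|^{-\kappa\tau(\G)(1-\epsilon)}\right)$ for $|t|\geq1$. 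Finally, to collect the coefficients I would use $\|(f_i)_{\mathrm{temp}}\|_2\leq\|f_i\|_2$ together with Cauchy--Schwarz and Bessel's inequality, $\sum_k|\langle f_1,\varphi_k\rangle|\,|\langle f_2,\varphi_k\rangle|\leq\|f_1\|_2\|f_2\|_2$, and absorb the finitely many implied constants from \eqref{comde} into one; this gives the stated estimate.

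Essentially every step here is bookkeeping; the one point I expect to require a little care is the uniformity of the implied constant in \eqref{comde} over the exceptional parameters $s_k$, which is why it is convenient to use that the exceptional spectrum is finite (alternatively, one checks that the constant in the spherical-function bound \eqref{cd} stays bounded as long as $\rho-s$ is bounded away from $0$, which holds here since $s_k\leq\rho-\tau(\G)$). The degenerate case of no exceptional spectrum is covered by the convention $\tau(\G)=\rho$, in which only \eqref{tede} is needed.
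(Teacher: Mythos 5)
Your proposal is correct and follows essentially the same route as the paper: spectral decomposition \eqref{sdc}, the two bounds of Proposition \ref{rod} for the exceptional and tempered pieces, the inequality $\rho-s_k\geq\tau(\G)$, and Cauchy--Schwarz/Bessel to collect the coefficients. The extra remarks on vanishing cross terms and uniformity of constants over the finitely many exceptional parameters are accurate but just make explicit what the paper leaves implicit.
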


\begin{proof}
For any spherical $f_1,f_2\in L^2_0(\G\bk G)$, applying the spectral decomposition $(\ref{sdc})$ and Proposition \ref{rod} we get for any $|t|\geq 1$
\begin{align*}
|\langle \pi(h_t)f_1, f_2\rangle|&\leq\sum_{k}\left|\langle f_1, \varphi_k\rangle \overline{\langle f_2, \varphi_k\rangle}\right|\left| \langle \pi(h_t)\varphi_k, \varphi_k\rangle\right| +\left|\langle \pi(h_t)f^1_{\textrm{temp}}, f^2_{\textrm{temp}}\rangle\right|\\
&\ll_{\epsilon} \sum_k \frac{\left|\langle f_1, \varphi_k\rangle \overline{\langle f_2, \varphi_k\rangle}\right|}{|t|^{\kappa(\rho-s_k)(1-\epsilon)}}+ \frac{\|f^1_{\textrm{temp}}\|_2\|f^2_{\textrm{temp}}\|_2}{|t|^{\kappa\rho(1-\epsilon)}}\\
&\leq  \frac{\sum_k\left|\langle f_1, \varphi_k\rangle \overline{\langle f_2, \varphi_k\rangle}\right|+\|f^1_{\textrm{temp}}\|_2\|f^2_{\textrm{temp}}\|_2}
{|t|^{\kappa\tau(\Gamma)(1-\epsilon)}}\\
&\leq \frac{\|f_1\|_2\|f_2\|_2}{|t|^{\kappa\tau(\Gamma)(1-\epsilon)}},
\end{align*}
where for the last inequality we used Cauchy-Schwartz.
\end{proof}
As a direct corollary, we have the following:
\begin{Cor}
\label{uod}
For $G$ a connected simple Lie group with finite center and real rank one, not locally isomorphic to $SO(2,1)$ and $\Gamma\leq G$ a lattice in $G$. If $\tau(\Gamma)> \frac{1}{\kappa(G)}$, then any unbounded one-parameter flow is SD for $\G\bk G$. In particular, if $G$ is locally isomorphic to $\Sp(d,1)$ with $d\geq 2$ or $\mathrm{F}_4^{-20}$, then any unbounded one-parameter flow is SD for $\G\bk G$ for any lattice $\G$.
\end{Cor}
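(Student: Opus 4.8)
The plan is to read off the corollary directly from the quantitative decay estimate established just above, so the argument is essentially bookkeeping with exponents. For the first assertion I would start from the hypothesis $\tau(\G)>\tfrac{1}{\kappa(G)}$, which says precisely that $\kappa(G)\tau(\G)>1$, and then fix $\epsilon>0$ small enough both to lie in the range where the preceding theorem applies and so that $\eta:=\kappa(G)\tau(\G)(1-\epsilon)$ is still $>1$ --- possible exactly because $\kappa(G)\tau(\G)>1$. For an arbitrary unbounded one-parameter subgroup $\{h_t\}_{t\in\R}$, that theorem then gives
$$|\langle\pi(h_t)f_1,f_2\rangle|\ll_{\epsilon}\frac{\|f_1\|_2\,\|f_2\|_2}{|t|^{\eta}}\qquad(|t|\geq 1)$$
for all spherical $f_1,f_2\in L^2_0(\G\bk G)$, with $\eta>1$; this is verbatim the definition of Summable Decay, so the flow is SD, and since $\{h_t\}$ was arbitrary, every unbounded one-parameter flow on $\G\bk G$ is SD.

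For the ``in particular'' clause, I would use that $\kappa(G)=1$ when $G$ is locally isomorphic to $\Sp(d,1)$ ($d\geq 2$) or to $\mathrm{F}_4^{-20}$, so by the first assertion it suffices to check $\tau(\G)>1$ for \emph{every} lattice $\G\leq G$. Here I would invoke \eqref{e:SpectralGap}: $\tau(\G)=\min_k(\rho-s_k)$, the minimum running over the exceptional parameters $s_k$ of the spectral decomposition \eqref{sdc}, each of which lies in the \emph{open} interval $(0,\rho_0)$ (and $\tau(\G)=\rho$ if there are none). From the rank one table, under the identification of $\fa_{\bbC}^{\ast}$ with $\bbC$ via $\alpha(X_1)=1$, one has $\rho_0=\rho-2\alpha$ (so $\rho-\rho_0=2$) for $\Sp(d,1)$ and $\rho_0=\rho-6\alpha$ (so $\rho-\rho_0=6$) for $\mathrm{F}_4^{-20}$, hence $\rho-\rho_0\geq2$ in either case. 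Since each $s_k<\rho_0$ strictly, $\rho-s_k>\rho-\rho_0\geq2>1$ for every exceptional $s_k$, while $\tau(\G)=\rho>1$ if there are none; thus $\tau(\G)>1=\tfrac{1}{\kappa(G)}$ for every lattice, and the first assertion finishes the proof.

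I do not expect any genuine obstacle, as this is a formal consequence of the decay bound. The two points that need a little care are choosing $\epsilon$ so that the exponent stays \emph{strictly} above $1$ rather than merely $\geq1$, and using the strict inclusion $(0,\rho_0)\subsetneq(0,\rho)$ --- the gap in the spherical dual forced by property $(T)$ of $\Sp(d,1)$ and $\mathrm{F}_4^{-20}$ --- to get $\tau(\G)>1$ uniformly over all lattices, with no spectral-gap hypothesis. (The exclusion of $G$ locally isomorphic to $\SO(2,1)$ is only to avoid a vacuous statement: there $\kappa(G)=2$ but $\rho=\tfrac12\alpha$, so $\tau(\G)\leq\rho=\tfrac12=\tfrac{1}{\kappa(G)}$ always, and the hypothesis $\tau(\G)>\tfrac{1}{\kappa(G)}$ can never hold.)
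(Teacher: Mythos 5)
Your proposal is correct and is exactly the intended argument: the paper presents this as a "direct corollary" of the preceding decay theorem, and your two steps — choosing $\epsilon$ so that $\kappa(G)\tau(\G)(1-\epsilon)>1$ to match the definition of SD, and then reading off $\tau(\G)>\rho-\rho_0\geq 2>1=\tfrac{1}{\kappa(G)}$ for $\Sp(d,1)$ and $\mathrm{F}_4^{-20}$ from the gap $(0,\rho_0)\subsetneq(0,\rho)$ in the spherical unitary dual — are precisely the bookkeeping the authors leave implicit. Your side remarks (the empty-exceptional-spectrum convention $\tau(\G)=\rho$ and the vacuity of the hypothesis for $\SO(2,1)$) are also accurate.
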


\begin{Rem}
For $G$ is locally isomorphic to $\SO(d+1,1)$ and $\G$ a congruence lattice, the best known bounds towards the Selberg-Ramanujan conjecture  \cite{BurgerSarnak1991,KimSarnak03,BlomerBrumley11,BergeronClozel13,BergeronClozel17} imply that $\tau(\G)\geq \tfrac{25}{64}$ for $d=1$, that $\tau(\G)\geq \tfrac{25}{32}$ for $d=2$ and that $\tau(\G)\geq 1$ when $d\geq 3$. Similarly, when $G$ is locally isomorphic to $\SU(d,1)$, we have that $\tau(\G)\geq \tfrac{6}{5}$ when $d=2$ and that $\tau(\G)\geq 2$ for $d\geq 3$.
In particular, it follows that for any rank one group, $G$, not locally isomorphic to $\SL_2(\R)$, for any congruence lattice $\G\leq G$, we have that  $\kappa \tau(\G)>1$ and all unbounded one-parameter flows on $\G\bk G$ are SD.
\end{Rem}
%\footnotetext{\color{red}{Is it $"\geq"$ or $">"$? If $\tau(\Gamma)= 1$ then our result can not imply that any unbounded one-parameter flow is SD for $\G\bk SU(d,1)$.}}

\subsection{Simple groups of higher rank}
Next, we consider the case where  $G$ is a connected simple Lie group with finite center and real rank $\geq 2$.
Following  \cite{Oh98}, two roots $\alpha$ and $\beta$ are called strongly orthogonal if neither one of $\alpha\pm\beta$ is a root. Let $S(\Phi)$ denote the family of all subsets of $\Phi^{+}$ whose elements are pairwise strongly orthogonal. We call an element $\mathcal{O}$ in $S(\Phi)$ a strongly orthogonal system. Let $\varrho$ be the function on $S(\Phi)$ given by $\varrho(\mathcal{O})=\sum_{\alpha\in\mathcal{O}}\alpha$.

\begin{Prop}\cite[Proposition 2.3]{Oh98}
\label{so} Let $\Delta=\{\alpha_i\ |\ i\in I\}$ be the set of simple roots as above. There exists a maximal strongly orthogonal system $\mathcal{Q}(\Phi)$ in $S(\Phi)$ in the sense that for any $\mathcal{O}\in S(\Phi)$, for each $i\in I$, the coefficient of $\alpha_i$ in $\varrho(\mathcal{Q}(\Phi))$ is greater than or equal to the coefficient of $\alpha_i$ in $\varrho(\mathcal{O})$.
\end{Prop}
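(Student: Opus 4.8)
The statement is a purely combinatorial fact about the irreducible root system $\Phi$, and the plan is to exhibit $\mathcal{Q}(\Phi)$ explicitly and then verify the claimed maximality one simple root at a time. It is convenient to allow reducible systems too, since the subsystems arising below need not be irreducible: if $\Phi=\Phi'\sqcup\Phi''$ is an orthogonal decomposition, then $S(\Phi)=\{\mathcal{O}'\sqcup\mathcal{O}'':\mathcal{O}'\in S(\Phi'),\,\mathcal{O}''\in S(\Phi'')\}$, the coefficient of a simple root of $\Phi'$ in $\varrho(\mathcal{O}'\sqcup\mathcal{O}'')$ depends only on $\mathcal{O}'$, and $\varrho$ is additive, so $\mathcal{Q}(\Phi)=\mathcal{Q}(\Phi')\sqcup\mathcal{Q}(\Phi'')$ works and one reduces to the irreducible types.

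The naive candidate for $\mathcal{Q}(\Phi)$ is the cascade of iterated highest roots: take the highest root $\beta_1$ of $\Phi$, pass to the subsystem $\Phi\cap\beta_1^{\perp}$ — which, because $\beta_1$ is highest, is precisely the set of roots strongly orthogonal to $\beta_1$ — and repeat. This is a strongly orthogonal system, but it is \emph{not} maximal in general: in $B_3$ the cascade $\{e_1+e_2,e_1-e_2,e_3\}$ is beaten coefficientwise by $\{e_1-e_3,e_1+e_3,e_2\}$, whose $\varrho$ exceeds that of the cascade by $\alpha_2$, and in $D_4$ the cascade $\{e_1\pm e_2,e_3\pm e_4\}$ is beaten by $\{e_1\pm e_3,e_2\pm e_4\}$, with gap $2\alpha_2$. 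So $\mathcal{Q}(\Phi)$ must be chosen more carefully — for the classical types, roughly, by pairing up coordinates so as to include both roots $e_i\pm e_j$ whenever these are strongly orthogonal, keeping the leftover coordinate for a short root in type $B$; for the exceptional types one reads it off the explicit list of roots. With a candidate in hand, maximality is checked as follows. Fix $\alpha_i\in\Delta$, write $c_i(\gamma)$ for the coefficient of $\alpha_i$ in a root $\gamma$, and let $m_i$ be the coefficient of $\alpha_i$ in the highest root, so $c_i(\gamma)\in\{0,1,\dots,m_i\}$ for every positive root $\gamma$. For $\mathcal{O}\in S(\Phi)$ one has $c_i(\varrho(\mathcal{O}))=\sum_{\gamma\in\mathcal{O}}c_i(\gamma)$; grouping $\mathcal{O}$ according to the value of $c_i$ and using that (i) only a bounded number of positive roots sharing a given large value of $c_i$ can be pairwise strongly orthogonal, and (ii) $|\mathcal{O}|\le\mathrm{rank}(\Phi)$, yields a sharp upper bound for $c_i(\varrho(\mathcal{O}))$, which one then confirms is attained by the chosen $\mathcal{Q}(\Phi)$.

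The crux — and the main obstacle — is the last point: that one and the \emph{same} strongly orthogonal system $\mathcal{Q}(\Phi)$ simultaneously realizes, for every $i\in I$, the largest possible coefficient of $\alpha_i$, so that the Pareto frontier degenerates to a single point. As the examples above show, the obvious type-free candidate already fails in rank $2$ and $3$, so I do not expect a uniform argument: a complete proof has to run through the classification of irreducible root systems, exhibiting $\mathcal{Q}(\Phi)$ and verifying the coefficient bounds in each of the finitely many types, with the exceptional types $E_6,E_7,E_8,F_4,G_2$ requiring an explicit, if elementary, computation.
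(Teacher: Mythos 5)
The paper does not prove this statement at all: it is imported verbatim from \cite[Proposition 2.3]{Oh98}, so the only ``proof'' in the paper is the citation. Measured against Oh's actual argument, your strategy is the right one and essentially the same: Oh establishes the proposition by exhibiting the maximal strongly orthogonal system $\mathcal{Q}(\Phi)$ explicitly for each irreducible type and checking the coefficient inequalities case by case (the resulting data is what the authors later quote from \cite[p.~187--190]{Oh02} in Remark \ref{fde}). Your preliminary reductions are sound --- the orthogonal-decomposition step, the bound $|\mathcal{O}|\leq \mathrm{rank}(\Phi)$ from linear independence of pairwise orthogonal roots, and the observation that orthogonality to the highest root forces strong orthogonality. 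Your counterexamples to the naive cascade in $B_3$ and $D_4$ are also correct (I verified the coefficient gaps $\alpha_2$ and $2\alpha_2$ respectively), and they show you have correctly located where the difficulty lies: the nontrivial content is that a \emph{single} $\mathcal{O}$ can be Pareto-optimal for all simple roots simultaneously.

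That said, as written this is a proof plan rather than a proof. The entire substance of the proposition is the explicit construction of $\mathcal{Q}(\Phi)$ in each type together with the verification that it attains the sharp upper bound on $c_i(\varrho(\mathcal{O}))$ for every $i$, and you defer exactly that. In particular, step (i) of your bounding scheme --- ``only a bounded number of positive roots sharing a given large value of $c_i$ can be pairwise strongly orthogonal'' --- is stated without quantification and is where all the type-dependent work hides; without carrying it out for at least the classical families and checking that your candidate $\mathcal{Q}(\Phi)$ meets the bound, the existence of a simultaneous maximizer has not been established. So the proposal correctly identifies the route but leaves the decisive case analysis undone; to be a complete proof it would need the explicit lists (or a reference to them, which is what the paper itself does).
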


Let $\xi=\frac12 \varrho(\mathcal{Q}(\Phi))=\frac12\sum_{\alpha\in\mathcal{Q}(\Phi)}\alpha$, and consider the $K$-bi-invariant function, $F:G\to \R^+$, defined on $A^+$ via
\begin{equation}\label{e:F}
F(\exp(X))=e^{-\xi(X)}.
\end{equation}

We then have the following result from \cite{Oh98}.
\begin{Thm}\label{d.decay}
For $G$ a connected simple group with finite center and real rank $\geq 2$, for any $\epsilon>0$ sufficiently small, for any nontrivial $\sigma\in \widehat{G}_{K}$ and $K$-invariant unit vector $v_\sigma$ of $\sigma$
\begin{equation}
\left|\langle\sigma(g)v_{\sigma},v_{\sigma}\rangle\right|\ll_\epsilon F(g)^{1-\epsilon}\qquad \textrm{for any}\ g\in G.
\end{equation}
%where $F$ is the $K$-bi-invariant function on $G$ with its values $A^{+}$ given by
%$$F(\exp(X))=e^{-\xi(X)}.$$
%with $X\in \fa^+$ and $\xi=\varrho(\mathcal{Q}(\Phi))=\frac12\sum_{\alpha\in\mathcal{Q}(\Phi)}\alpha$ as above.
\end{Thm}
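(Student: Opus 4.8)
The plan is to recall Oh's strategy, since Theorem~\ref{d.decay} is precisely her uniform pointwise bound on spherical matrix coefficients \cite{Oh98}. First I would exploit that $v_\sigma$ is $K$-invariant: the coefficient $g\mapsto\langle\sigma(g)v_\sigma,v_\sigma\rangle$ is then $K$-bi-invariant, so by the Cartan decomposition $G=KA^+K$ it suffices to bound $\langle\sigma(\exp X)v_\sigma,v_\sigma\rangle$ for $X\in\fa^+$, and the target $F(\exp X)^{1-\epsilon}=e^{-(1-\epsilon)\xi(X)}$ (with $F$ as in \eqref{e:F}) is itself $K$-bi-invariant.

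Next I would bring in the maximal strongly orthogonal system $\mathcal{Q}(\Phi)$ of Proposition~\ref{so}. For each $\beta\in\mathcal{Q}(\Phi)$ let $G_\beta\leq G$ be the rank-one connected subgroup generated by $\fg_{\pm\beta}$ (together with $\fg_{\pm2\beta}$ when $2\beta\in\Phi$). Strong orthogonality forces the $G_\beta$ to commute pairwise, so they generate a subgroup $L$ which is locally the direct product $\prod_\beta G_\beta$, with split torus $\prod_\beta A_\beta\subseteq A$, and $v_\sigma$ is fixed by a maximal compact subgroup of each $G_\beta$.

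The crux, and where I expect the real work to lie, is a uniform spectral-gap estimate for the restrictions $\sigma|_{G_\beta}$: for every $\epsilon>0$ and every nontrivial $\sigma$,
$$|\langle\sigma(\exp(tX_\beta))v_\sigma,v_\sigma\rangle|\ll_\epsilon\Xi_{G_\beta}(\exp(tX_\beta))^{1-\epsilon}\asymp_\epsilon e^{-(1-\epsilon)\rho_\beta(tX_\beta)},$$
where $X_\beta\in\fa$ is dual to $\beta$ and $\rho_\beta$ is half the sum of positive roots of $G_\beta$. The point is that property~$(T)$ together with $\mathrm{rank}(G)\geq2$ forbids $\sigma|_{G_\beta}$ from resembling a complementary-series representation arbitrarily close to the trivial one, uniformly in $\sigma$. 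I would prove this as Oh does, by a tensor-power argument combined with the Cowling--Haagerup--Howe criterion: a weak containment statement for $\sigma\otimes\overline\sigma$ (which has no invariant vectors, by property~$(T)$) forces a high enough tensor power of the coefficient into $L^{2+\delta}$, hence a bound by $\Xi_{G_\beta}^{1-\epsilon}$, the $\epsilon$-loss being the cost of passing through finite tensor powers.

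Finally I would assemble the pieces: decomposing $\sigma|_L$ into irreducibles of $\prod_\beta G_\beta$, each component's coefficient factors through the $G_\beta$, and applying the per-$\beta$ estimate together with Cauchy--Schwarz on the coefficients of $v_\sigma$ against these components yields control of $\langle\sigma(\exp Y)v_\sigma,v_\sigma\rangle$ in terms of $\prod_\beta e^{-(1-\epsilon)\rho_\beta(\cdot)}$. A combinatorial comparison --- this is exactly where the maximality in Proposition~\ref{so} enters --- shows that for a general $X\in\fa^+$ the exponent $\xi(X)=\frac{1}{2}\sum_{\beta\in\mathcal{Q}(\Phi)}\beta(X)$ dominates, giving $|\langle\sigma(\exp X)v_\sigma,v_\sigma\rangle|\ll_\epsilon e^{-(1-\epsilon)\xi(X)}=F(\exp X)^{1-\epsilon}$ on all of $A^+$, and hence on $G$.
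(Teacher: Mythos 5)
Your route is genuinely different from the paper's, and it is worth being clear about what the paper actually does here: it does \emph{not} reprove Oh's theorem. Its proof is two lines of citation plus one reduction. For linear $G$ the statement is exactly \cite[Theorem A]{Oh98}; the only new content is the observation that a connected simple group with finite center need not be linear (e.g.\ the metaplectic group), and that this does not matter for \emph{spherical} representations: any $\sigma\in\widehat{G}_K$ is a Langlands quotient of $\mathrm{Ind}_{NAM}^G(\chi\otimes 1_M)$, the finite center lies in $K$ and hence in $M$, so it acts trivially and $\sigma$ descends to the linear group $\mathrm{Ad}(G)$, to which Oh's theorem applies. Your proposal omits this reduction entirely, so even if your plan is read as ``invoke Oh's machinery,'' you still owe the passage from the finite-center case to the linear case (or a check that the machinery never uses linearity).

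If instead your proposal is read as a self-contained re-derivation of Oh's bound, there is a genuine gap at precisely the step you call the crux. The uniform decay of $\sigma|_{G_\beta}$ does not follow from property $(T)$ of $G$: property $(T)$ isolates the trivial representation of $G$ in $\widehat G$, but says nothing a priori about whether the restriction of $\sigma$ to the rank-one subgroup $G_\beta$ (which does not have property $(T)$) contains, or is weakly approximated by, complementary series of $G_\beta$ arbitrarily close to its trivial representation. Indeed, Oh's theorem is what \emph{produces} effective Kazhdan constants, so deducing the key restriction estimate from property $(T)$ is circular in spirit. The actual mechanism in \cite{Oh98} is Mackey theory applied to subgroups of the form $\SL_2(\R)\ltimes(\text{unipotent})$ inside $G$ (this is where real rank $\geq 2$ enters), which forces the restrictions of any nontrivial $\sigma$ to the relevant rank-one subgroups to be tempered or nearly so; the Cowling--Haagerup--Howe criterion then converts $L^{2+\delta}$-integrability into the pointwise bound by $\Xi^{1-\epsilon}$, and the assembly over the maximal strongly orthogonal system $\mathcal{Q}(\Phi)$ proceeds as you describe (with $\rho_\beta=\tfrac12\beta$, so that $\sum_\beta\rho_\beta=\xi$). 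As written, the key analytic input is asserted with an incorrect justification rather than proved.
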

\begin{proof}
When $G$ is a linear group this is \cite[Theorem A]{Oh98}.
%\begin{Rem}\label{removelinear}
In general, by Langlands classification theorem, any $\sigma\in \widehat{G}_{K}$ appears as some quotient of the induced representation $\textrm{Ind}_{NAM}^G\left(\chi\otimes 1_{M}\right)$, where $N, A$ are as above and $M$ is the centralizer of $A$ in $K$, $\chi$ is some character of $A$ and $1_M$ is the trivial representation of $M$. Elements in $\textrm{Ind}_{NAM}^G\left(\chi\otimes 1_{M}\right)$ are measurable functions $f: G\to \mathbb{C}$ satisfying
$$f(namg)= \chi(a)f(g)\ \textrm{for a.e. $g\in G$, with $n\in N, a\in A$ and $m\in M$},$$
and $G$ acts on $\textrm{Ind}_{NAM}^G\left(\chi\otimes 1_{M}\right)$ via the right regular action.
%$$\textrm{Ind}_{NAM}^G\chi\otimes 1_{M}:=\left\{f: G\to \mathbb{C}\ |\ f(namg)= \chi(a)f(g)\ \textrm{for a.e. $g\in G$ with $n\in N, a\in A$ and $m\in M$}\right\},$$
%with the right regular $G$-action,
Since $G$ has finite center, the maximal compact subgroup $K$ contains the center (\cite[Theorem 6.31]{Knapp02}). In particular, $M$ also contains the center. Hence for any $z$ in the center, $z\cdot f(g)= f(gz)= f(zg)= f(g)$. Thus $\sigma\in \widehat{G}_{K}$ descends to an irreducible unitary representation of the adjoint group $\textrm{Ad}(G)$. Identifying $\fg$ with $\textrm{ad}({\fg})$ and applying \cite[Theorem A]{Oh98} to the linear group $\textrm{Ad}(G)$, we get that for any $g\in G$
$$|\langle\sigma(g)v_{\sigma}, v_{\sigma}\rangle|= |\langle\sigma\left(\textrm{Ad}(g)\right)v_{\sigma},v_{\sigma}\rangle|\ll_{\epsilon}F\left(\textrm{Ad}(g)\right)^{1-\epsilon}=F(g)^{1-\epsilon}.$$
%\end{Rem}
\end{proof}

%\footnotetext{We note that the assumption $G$ being linear is not essential here since by the Langlands classification theorem, any irreducible spherical unitary representation of $G$ descends to an irreducible spherical unitary representation of the adjoint group $\textrm{Ad}(G)$.}
 %$\widehat{G}_K= \widehat{\textrm{Ad}(G)}_{\textrm{Ad}(K)}$.}
Applying these results to one-parameter subgroups we get:
\begin{Prop}\label{decay}
Let $G$ denote a connected simple Lie group with finite center and real rank $\geq 2$ and $\{h_t\}_{t\in \R}$ an unbounded one-parameter subgroup.
For any $\epsilon> 0$ sufficiently small, for any nontrivial $\sigma\in\widehat{G}_{K}$ and $|t|\geq 1$,
\begin{equation}\label{u.decay}
 |\langle\sigma(h_t)v_{\sigma}, v_{\sigma}\rangle|\ll_\epsilon |t|^{\epsilon-1}.
\end{equation}
\end{Prop}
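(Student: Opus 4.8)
The plan is to combine Theorem \ref{d.decay} with the Cartan decomposition analysis of one-parameter subgroups carried out in Proposition \ref{mprop}. First I would write $h_t = k_1(t)\exp(X(t))k_2(t)$ with $k_i(t)\in K$ and $X(t)\in\fa^+$ (the Cartan decomposition \eqref{cartan}). Since $v_\sigma$ is a $K$-invariant unit vector, the matrix coefficient is $K$-bi-invariant, so $\langle\sigma(h_t)v_\sigma,v_\sigma\rangle = \langle\sigma(\exp(X(t)))v_\sigma,v_\sigma\rangle$, and by Theorem \ref{d.decay} this is bounded by $O_\epsilon\bigl(F(\exp(X(t)))^{1-\epsilon}\bigr) = O_\epsilon\bigl(e^{-(1-\epsilon)\xi(X(t))}\bigr)$, where $\xi = \tfrac12\varrho(\mathcal{Q}(\Phi))$. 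So everything reduces to a lower bound for $\xi(X(t))$ in terms of $\log|t|$.

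The key step is therefore to show $e^{\xi(X(t))}\gg |t|$ for $|t|\geq 1$, up to the usual $\epsilon$-loss — or more precisely, that $\xi(X(t)) \geq (1+o(1))\log|t|$, which suffices since we can absorb constants into the $\epsilon$. The point is that $\xi$ is a positive combination of the simple roots with strictly positive coefficients (because $\mathcal{Q}(\Phi)$ is a \emph{maximal} strongly orthogonal system, by Proposition \ref{so}, its support spans, so every simple root appears with a positive coefficient in $\varrho(\mathcal{Q}(\Phi))$), while by Proposition \ref{mprop} we already control the growth of $\lambda_1(X(t))$, the value of the highest root: it grows at least like $|t|^l$ with $l\geq 2$ in the quasi-unipotent case and exponentially in the quasi-diagonalizable case. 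Since on $\fa^+$ the highest root $\lambda_1$ dominates every other positive root, and $\xi$ is a positive linear combination of roots, one gets $\xi(X(t))\geq c\,\lambda_1(X(t))$ for some $c>0$ depending only on the root system; combined with $e^{\lambda_1(X(t))}\gg|t|^2$ from Proposition \ref{mprop} this gives $e^{\xi(X(t))}\gg |t|^{2c}$. This is not quite $|t|^{1-\epsilon}$ on the nose if $2c<1$, so a slightly more careful argument is needed: one should use that $\xi$, being a sum of \emph{distinct} positive roots over a strongly orthogonal family, can in fact be bounded below more sharply — for instance by writing $\xi(X(t))$ directly in terms of the coefficients of $X(t)$ in the fundamental coweight basis and checking, root system by root system (or uniformly via the classification of which root is the highest), that $\xi(X(t))\geq \lambda_1(X(t))$, or at least $\xi(X(t)) \geq \tfrac12\lambda_1(X(t))$, which together with the sharper estimate $e^{\lambda_1(X(t))}\asymp|t|^l$, $l\geq 2$, yields $e^{\xi(X(t))}\gg |t|^{l/2}\geq|t|$.

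I expect the main obstacle to be precisely this comparison between $\xi$ and $\lambda_1$ on the positive Weyl chamber — making sure the constant works out so that the final exponent is at least $1$ rather than merely positive. This is where the hypothesis $\mathrm{rank}\geq 2$ and the use of Oh's maximal strongly orthogonal system really enter: in higher rank $\varrho(\mathcal{Q}(\Phi))$ is "large enough" relative to $\lambda_1$, whereas in rank one it would only be a bounded multiple of $\alpha$ and one would need the spectral-gap refinement instead (as in Proposition \ref{rod}). So the write-up will need to invoke either the explicit tables for $\xi$ from \cite{Oh98} or the structural fact that in rank $\geq 2$ the half-sum $\xi$ pairs against any $X(t)\in\fa^+$ to at least $\lambda_1(X(t))$; granting that, the estimate $e^{\xi(X(t))}\gg|t|$ follows from Proposition \ref{mprop}, and then \eqref{u.decay} is immediate from Theorem \ref{d.decay} after replacing $\epsilon$ by a suitable multiple of itself.
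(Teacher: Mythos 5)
Your overall strategy is exactly the paper's: reduce via the Cartan decomposition and Theorem \ref{d.decay} to a lower bound on $\xi(X(t))$, and obtain that bound from Proposition \ref{mprop} through a comparison of $\xi$ with the highest root $\lambda_1$ on $\fa^+$. You also correctly identify the inequality that is actually needed, namely $\xi(X)\geq\tfrac12\lambda_1(X)$ for $X\in\fa^+$. What you flag as ``the main obstacle,'' however, has a one-line uniform resolution that you miss: the singleton $\{\lambda_1\}$ is itself a strongly orthogonal system, so the maximality in Proposition \ref{so} says that every simple-root coefficient of $\varrho(\mathcal{Q}(\Phi))$ dominates the corresponding coefficient of $\lambda_1$; since simple roots are nonnegative on $\fa^+$, this gives $2\xi(X)=\varrho(\mathcal{Q}(\Phi))(X)\geq\lambda_1(X)$ with no case analysis and no appeal to the explicit tables. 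Combined with $e^{\lambda_1(X(t))}\gg t^2$ from Proposition \ref{mprop} (valid in both the quasi-diagonalizable and the quasi-unipotent case, since $l\geq 2$ by Lemma \ref{l:nilpotnent}), this yields $e^{\xi(X(t))}\gg|t|$ and hence \eqref{u.decay}. The root-system-by-root-system comparison you propose is what the paper does carry out, but only in Remark \ref{fde}, to obtain the stronger inequality $\xi\geq\lambda_1$ in certain types; it is not needed for the proposition itself. A small side point: your claim that every simple root appears with strictly positive coefficient in $\varrho(\mathcal{Q}(\Phi))$ ``because its support spans'' is not an argument; the clean justification is again Proposition \ref{so} applied to $\{\lambda_1\}$, since for an irreducible root system the highest root has all simple-root coefficients positive.
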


\begin{proof}
Since the singleton $\{\lambda_1\}$ constitutes a strongly orthogonal system, Proposition \ref{so}, implies that $\xi(X)\geq \frac12\lambda_1(X)$ for any $X\in\fa^+$, leading to the bound $F(\exp(X))\leq e^{-\lambda_1(X)/2}$.
Now using the Cartan decomposition $h_t=k_1(t)\exp(X(t))k_2(t)$ and Theorem \ref{d.decay} we get that
\begin{equation*}
\left|\langle\sigma(h_t)v_{\sigma},v_{\sigma}\rangle\right|\ll_\epsilon  \exp((\epsilon-1)\xi(X(t)))\leq \exp(\tfrac{(\epsilon-1)}{2}\lambda_1(X(t))).
\end{equation*}
Now, by Proposition \ref{mprop}, we have that  $e^{\lambda_1(X(t))}\gg t^2$, thus concluding the proof.
\end{proof}

\begin{Rem}
\label{fde}In \cite[p. 187-190]{Oh02} both $\xi$ and the highest root $\lambda_1$ are explicitly given in terms of linear combinations of simple roots. By comparing $\xi$ and $\lambda_1$ directly, we note that when $\Phi_{\R}(\fa,\fg)$ is of type $B_n (n\geq 4)$, $D_n (n\geq 4)$, $E_6$, $E_7$, $E_8$ or $F_4$, we have that $\xi(X)\geq \lambda_1(X)$ for any $X\in \fa^{+}$. Hence in these cases we have the following summable decay:
\begin{equation}
\label{fd}|\langle\sigma(h_t)v_{\sigma},v_{\sigma}\rangle|\ll_{\epsilon}|t|^{2(\epsilon-1)}
\end{equation}
for any nontrivial $\sigma\in \widehat{G}_{K}$ and any $|t|\geq 1$. When $\Phi_{\R}(\fa,\fg)$ is not of the above types, if the one-parameter subgroup is quasi-diagonalizable then the matrix coefficients decay exponentially, and if it is quasi-unipotent with $\left(\textrm{ad}(X_{\fn})\right)^3\neq 0$, then we can take $l=3$ in Proposition \ref{mprop} leading to the summable decay:
\begin{equation}\label{fd2}
|\langle\sigma(h_t)v_{\sigma},v_{\sigma}\rangle|\ll_{\epsilon}|t|^{\frac32(\epsilon-1)}
\end{equation}
for any nontrivial $\sigma\in\widehat{G}_K$ and any $|t|\geq 1$.
\end{Rem}

\subsection{Semisimple groups}
We now consider the general case where $G$ is a connected semisimple Lie group with finite center and no compact factors. Let $\Gamma\leq G$ denote an irreducible lattice, and $\{h_t\}_{t\in \R}$ an unbounded one-parameter subgroup of $G$ as before.

To deal with this case, first, note that there is a surjective homomorphism $\prod_{i=1}^mG_i\rightarrow G$ with finite kernel such that each $G_i$ is a noncompact connected simple Lie group with finite center. %Namely, each $G_i$ is either of real rank  $\geq 2$ or locally isomorphic to $Sp(d,1)$ or $F_4^{-20}$.
Let $\tilde{\Gamma}$ be the preimage of $\Gamma$ and $\{\tilde{h}_t\}$ be the identity component of the pre-image of $\{h_t\}$. By replacing $(\Gamma\backslash G, h_t)$ by $(\tilde{\Gamma}\backslash \prod_{i=1}^m G_i, \tilde{h}_t)$ we may assume, without loss of generality, that $G=\prod_{i=1}^m G_i$ and $h_t=(h_t^1,\ldots, h_t^m)$ is unbounded as a one-parameter subgroup of $G$.

Next, our maximal compact subgroup is of the form $K=\prod_{i=1}^mK_i$, with each $K_i$ a maximal compact subgroup of $G_i$. %Following \cite{KleinbockMargulis1996},
By slightly abusing the terminology in \cite{KleinbockMargulis1996}\footnotemark, we say $G_i$ is an \textit{essential factor}
%\footnote
of $G$ if $\{h_t^i\}_{t\in \R}$ is unbounded in $G_i$. After reordering the factors, we can assume that $G_1,\cdots, G_k$ are all the essential factors (since $\{h_t\}$ is unbounded, we have $k\geq 1$). Let $\widehat{G}_{\Gamma}\subset \widehat{G}$ be the set of irreducible unitary representations that are weakly contained in $L^2_0(\G\bk G)$ and $\widehat{G}_{K, \Gamma}=\widehat{G}_{\Gamma}\cap \widehat{G}_{K}$. We first note that for any $\sigma\in \widehat{G}_{K,\Gamma}$, $\sigma$ is of the form $\sigma=\otimes_{i=1}^m \sigma_i$ with each $\sigma_i\in \widehat{G_i}_{K_i}$. Since $\Gamma\leq G$ is irreducible, each $\sigma_i$ is nontrivial.
\footnotetext{In \cite{KleinbockMargulis1996}, $G_i$ is called an essential factor if $\{h_t^i\}_{t\in\mathbb{R}}$ is quasi-diagonalizable, while here we allow $\{h_t^i\}_{t\in\mathbb{R}}$ to be quasi-unipotent.}
%\footnotetext{\color{red}{I checked \cite{KleinbockMargulis1996} and found that their notion of essential factor is different from ours: $G_i$ being an essential factor for them means $h_t^i$ is quasi-diagonalizable, while for us, $h_t^i$ can also be quasi-unipotent, so I just deleted this citation. Or maybe include it and make a remark about the difference?}}
% {\color{red}{(is this true?)}}.
Moreover, a $K$-invariant unit vector $v_{\sigma}$ of $\sigma$ is of the form $v_{\sigma}=\otimes_{i=1}^mv_{\sigma_i}$, where $v_{\sigma_i}$ is the $K_i$-invariant unit vector of $\sigma_i$. Thus
$$|\langle \sigma(h_t)v_{\sigma}, v_{\sigma}\rangle|=\prod_{i=1}^m|\langle \sigma_i(h_t^i)v_{\sigma_i},v_{\sigma_i}\rangle|.$$

First, we consider the case that $G$ (and hence each of its factors) has property $(T)$, that is,  each $G_i$ is either of real rank  $\geq 2$ or locally isomorphic to $Sp(d,1)$ or $F_4^{-20}$.
In this case, for $1\leq i\leq k$ the factor $\{h_t^i\}$ is unbounded and by Proposition \ref{decay} and Corollary \ref{uod}, for any $|t|\geq 1$ we have $|\langle \sigma_i(h_t^i)v_{\sigma_i},v_{\sigma_i}\rangle|\ll_{\epsilon} \frac{1}{|t|^{(1-\epsilon)}}$. While for $k+1\leq i\leq m$, we bound $|\langle \sigma_i(h_t^i)v_{\sigma_i},v_{\sigma_i}\rangle|\leq 1$. We thus have for any $|t|\geq 1$
\begin{equation}\label{e:factor}
|\langle \sigma(h_t)v_{\sigma}, v_{\sigma}\rangle|\ll_{\epsilon} \frac{1}{|t|^{k(1-\epsilon)}}.
\end{equation}
\begin{Prop}\label{p:decay}
Let $G$ be a connected semisimple Lie group with finite center and no compact factors,
$\Gamma\leq G$ an irreducible lattice, and $\{h_t\}_{t\in \R}$ an unbounded one-parameter subgroup of $G$.
If $G$ has property $(T)$, then for any spherical $\varphi, \psi\in L^2_0(\G\bk G)$ and for any $|t|\geq 1$, we have
\begin{equation}\label
{e:decay}
|\langle \pi(h_t)\varphi, \psi\rangle|\ll_\epsilon \frac{\|\varphi\|_2\|\psi\|_2}{|t|^{k(1-\epsilon)}},\end{equation}
where $k\geq 1$ is the number of essential factors of the flow.
\end{Prop}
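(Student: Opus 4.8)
The plan is to deduce the statement from the pointwise estimate \eqref{e:factor} on matrix coefficients of individual $K$-invariant vectors by a standard spectral/direct-integral argument, reducing a general pair $\varphi,\psi\in L^2_0(\G\bk G)$ to the ``diagonal'' case already handled. First I would recall that, since $G$ has property $(T)$, the restriction of the regular representation to $L^2_0(\G\bk G)$ has a spectral gap, and hence $L^2_0(\G\bk G)$ decomposes as a direct integral $\int_{\widehat G_\G}^{\oplus} \sigma\, dm(\sigma)$ of irreducible unitary representations $\sigma\in\widehat G_\G$. The key observation, as noted in the excerpt, is that a spherical (i.e. $K$-invariant) vector $\varphi$ decomposes with $K$-invariant components $\varphi_\sigma\in\sigma^{K}$, and each such $\sigma$ factors as $\sigma=\otimes_i\sigma_i$ with every $\sigma_i$ nontrivial by irreducibility of $\G$, so that \eqref{e:factor} applies: $|\langle\sigma(h_t)\varphi_\sigma,\varphi_\sigma\rangle|\ll_\epsilon \|\varphi_\sigma\|^2/|t|^{k(1-\epsilon)}$ uniformly in $\sigma$.

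Next I would assemble the bound. For the diagonal case $\varphi=\psi$, integrating the fibrewise inequality over $\widehat G_\G$ gives
\begin{equation*}
|\langle\pi(h_t)\varphi,\varphi\rangle| \le \int_{\widehat G_\G} |\langle\sigma(h_t)\varphi_\sigma,\varphi_\sigma\rangle|\, dm(\sigma) \ll_\epsilon \frac{1}{|t|^{k(1-\epsilon)}}\int_{\widehat G_\G}\|\varphi_\sigma\|^2\, dm(\sigma) = \frac{\|\varphi\|_2^2}{|t|^{k(1-\epsilon)}},
\end{equation*}
using that $|\langle\sigma(h_t)v,v\rangle|$ is real and, after a harmless reduction, may be taken nonnegative, or else bounding the integrand in absolute value directly. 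To pass from $\varphi=\psi$ to general $\varphi,\psi$, I would apply the Cauchy--Schwarz inequality for positive-definite functions: since $g\mapsto\langle\pi(g)\varphi,\psi\rangle$ satisfies $|\langle\pi(g)\varphi,\psi\rangle|\le \langle\pi(g)\varphi,\varphi\rangle^{1/2}\langle\pi(g)\psi,\psi\rangle^{1/2}$ — more precisely, using $|\langle\sigma(g)v,w\rangle|\le \langle\sigma(g)v,v\rangle^{1/2}\langle\sigma(g)w,w\rangle^{1/2}$ fibrewise and then Cauchy--Schwarz on the direct integral — one obtains $|\langle\pi(h_t)\varphi,\psi\rangle|\ll_\epsilon \|\varphi\|_2\|\psi\|_2/|t|^{k(1-\epsilon)}$, which is exactly \eqref{e:decay}.

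Alternatively, and perhaps more cleanly, I would avoid the direct-integral formalism entirely by invoking the standard consequence of Theorem \ref{d.decay} / Proposition \ref{decay} that a uniform pointwise bound $|\langle\sigma(h_t)v_\sigma,v_\sigma\rangle|\le C(t)$ for all nontrivial spherical $\sigma$ weakly contained in $L^2_0$ and all $K$-fixed unit vectors automatically upgrades to $|\langle\pi(h_t)\varphi,\psi\rangle|\le C(t)\|\varphi\|_2\|\psi\|_2$ for all spherical $\varphi,\psi\in L^2_0$ — this is the usual ``matrix coefficients of the full representation are controlled by those of its irreducible constituents'' principle (e.g. via the fact that $L^2_0$ is weakly contained in a direct sum/integral of the $\sigma$'s, together with the $K$-invariance restricting attention to the one-dimensional space of spherical vectors in each $\sigma$). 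The main obstacle — really the only nontrivial point — is justifying this upgrade rigorously in the non-tempered, property-$(T)$ setting: one must ensure the spectral decomposition is available (guaranteed by property $(T)$, which bounds all $\sigma_i$ away from the trivial representation uniformly), that the spherical vectors behave well under the decomposition, and that the fibrewise Cauchy--Schwarz step is measurable and integrable; once \eqref{e:factor} is in hand these are routine, so the proof is short.
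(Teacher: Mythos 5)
Your proposal is correct and follows essentially the same route as the paper: decompose $L^2_0(\G\bk G)$ as a direct integral over $\widehat{G}_{K,\Gamma}$, apply the uniform fibrewise bound \eqref{e:factor} (valid since irreducibility of $\Gamma$ forces every factor $\sigma_i$ to be nontrivial), and conclude with Cauchy--Schwarz on $\int\|\varphi_\sigma\|_2\|\psi_\sigma\|_2\,d\nu(\sigma)\le\|\varphi\|_2\|\psi\|_2$. One caution: the global inequality $|\langle\pi(g)\varphi,\psi\rangle|\le\langle\pi(g)\varphi,\varphi\rangle^{1/2}\langle\pi(g)\psi,\psi\rangle^{1/2}$ you invoke is false for general unitary representations; what actually makes the off-diagonal step work is exactly the point you mention afterwards --- the $K$-fixed vectors in each irreducible $\sigma$ form a one-dimensional space, so $\varphi_\sigma$ and $\psi_\sigma$ are both multiples of $v_\sigma$ and the mixed fibrewise coefficient is bounded by $\|\varphi_\sigma\|_2\|\psi_\sigma\|_2\,|\langle\sigma(h_t)v_\sigma,v_\sigma\rangle|$ directly, which is how the paper proceeds without any detour through the diagonal case.
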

\begin{proof}
Using the direct integral decomposition of $L^2_0(\G\bk G)$, for any spherical $\varphi\in L^2_0(\G\bk G)$, $\varphi$ can be written as
$$\varphi= \int_{\sigma\in \widehat{G}_{K,\Gamma}}\varphi_{\sigma} d\nu(\sigma),$$
where $\varphi_{\sigma}\in\sigma$ is spherical and $\nu$ is some Borel measure on $\widehat{G}_{\G}$. Hence for any spherical $\varphi, \psi\in  L^2_0(\G\bk G)$, we have
$$\langle \pi(h_t)\varphi, \psi\rangle=\int_{\sigma\in\widehat{G}_{K,\G}}\langle\sigma(h_t)\varphi_{\sigma},\psi_{\sigma}\rangle d\nu(\sigma).$$
By \eqref{e:factor} for any $|t|\geq 1$ we can bound
%%$$|\langle \sigma(h_t)\varphi_{\sigma},\psi_{\sigma}\rangle|\ll_{\epsilon}\frac{||\varphi_{\sigma}||_2||\psi_{\sigma}||_2}{|t|^{k(1-\epsilon)}}.$$
%Hence for any $|t|\geq 1$ we have
\begin{align*}
\left|\langle \pi(h_t)\varphi, \psi\rangle\right|&\leq \int_{\sigma\in \widehat{G}_{K,\Gamma}}\left|\left\langle\sigma(h_t)\varphi_{\sigma},\psi_{\sigma}\right\rangle\right| d\nu(\sigma)\\
&\ll_{\epsilon} \frac{1}{|t|^{k(1-\epsilon)}}\int_{\sigma\in \widehat{G}_{K,\Gamma}}||\varphi_{\sigma}||_2||\psi_{\sigma}||_2 d\nu(\sigma)\\
%&\leq& \frac{1}{|t|^{k(1-\epsilon)}}\left(\int_{\sigma\in \widehat{G}_{K,\Gamma}}||\varphi_{\sigma}||_2^2 d\nu(\sigma)\right)^{\frac12}\left(\int_{\sigma\in \widehat{G}_{K,\Gamma}}||\psi_{\sigma}||_2^2 d\nu(\sigma)\right)^{\frac12}\\
&\leq\frac{\|\varphi\|_2\|\psi\|_2}{|t|^{k(1-\epsilon)}},
\end{align*}
where for the last inequality we used Cauchy-Schwartz.
\end{proof}

\begin{proof}[Proof of Theorem \ref{t:edecay}]
The result follows from Proposition \ref{p:decay} (after noting that for an unbounded flow we have at least one essential factor so $k\geq 1$).
We further note that the only possibility that the flow is not SD is that there is only one essential factor, say $G_1$, and $\{h_t^1\}_{t\in \mathbb{R}}$ (viewed as a one-parameter flow on $\G\bk G$) is not SD.
\end{proof}
\begin{proof}[Proof of Theorem \ref{t:csd}]
The result follows from the above argument and Remark \ref{fde}.
\end{proof}
\begin{Rem}\label{r:conditional2}
For a semisimple group $G$ of real rank $\geq 2$ without property $(T)$ our result is only conditional. In this case, Margulis Arithmeticity Theorem states that any irreducible lattice $\G\leq G$ is arithmetic in the sense that it is commensurable to a congruence lattice defined over some number field. Moreover, Serre's congruence subgroup conjecture (which is settled when $\G\bk G$ is not compact \cite{Rapinchuk92}) states that in fact all irreducible lattices are congruence lattices. Now, when $\G$ is a congruence lattice the generalized Selberg-Ramanujan conjecture \cite{Shahidi04,Sarnak05} gives very precise restrictions on which non-tempered representations may occur in the decomposition of  $L^2(\G\bk G)$, and these representations all have fast decay of matrix coefficients. In particular,  when $\G$ is a congruence lattice, the Selberg-Ramanujan conjecture implies that \eqref{e:decay} holds for all flows on $\G\bk G$, and moreover, when $G$ is not locally isomorphic to a product of copies of $\SL_2(\R)$ and $\SL_2(\bbC)$, this already follows from the known bounds \cite{KimSarnak03, BlomerBrumley11,BergeronClozel13,BergeronClozel17}  towards the Selberg-Ramanujan conjecture.
\end{Rem}

\section{Effective mean ergodic theorems and consequences}
Let $G$ be a connected semisimple Lie group with finite center and no compact factors, $\G\leq G$ an irreducible lattice, $\mathcal{X}=\Gamma\backslash G$, and $H=\{h_m\}_{m\in \bbZ}$ an unbounded discrete time one-parameter flow on $\mathcal{X}$ generated by a one-parameter subgroup as before.
%a discrete unbounded one-parameter subgroup generating a one-parameter flow as before.
For any $f\in L^2(\mathcal{X})$ and any integer $m\geq 1$, define the averaging operator
$$\beta_{m}^+(f)(x):=\frac{1}{m}\sum_{j=1}^{m}f(xh_j).$$
Since $H$ acts ergodically on $\mathcal{X}$, the mean ergodic theorem states that
$$\|\beta_m^+(f)- \mu(f)\|_2\to 0,$$
as $m\to\infty$ for any $f\in L^2(\mathcal{X})$, where $\mu(f):=\int_{\mathcal{X}}fd\mu$. In this section we adapt the method introduced in \cite{GhoshKelmer17} and \cite{Kelmer17b}, to prove two effective mean ergodic theorems using the explicit rate of decay of %correlations
matrix coefficients obtained in the previous section. The arguments are slightly different for rank one groups and for groups with property $(T)$, so we will treat them separately.
%Following \cite{Kelmer17b}, for any integer $m\geq 1$ and $f\in L^2(\mathcal{X},\mu)$, define the set
%$$\mathcal{C}_{m,f}=\left\{x\in \mathcal{X}\ |\ \left|\beta^+_m(f)(x)-\mu(f)\right|\geq \frac{1}{2}\mu(f)\right\}.$$
\subsection{Groups with Property (T)}
When the group $G$ has property $(T)$, we can use the uniform result on decay of matrix coefficients for one-parameter flows to  show the following.
\begin{Prop}
\label{pwt}Assume that $G$ has property (T). Then for any unbounded discrete time one-parameter flow $H=\{h_m\}_{m\in\mathbb{Z}}$, for any $\epsilon> 0$ and for any spherical $f\in L^2(\mathcal{X})$ we have
\begin{equation}
\label{key estimate}\|\beta_m^+(f)-\mu(f)\|_2\ll_{\epsilon}\frac{\|f\|_2}{m^{\frac12(1-\epsilon)}}.
 \end{equation}
 If the flow is SD we have the slightly stronger bound
\begin{equation}
\label{bmet}\|\beta_m^+(f)-\mu(f)\|_2\ll\frac{\|f\|_2}{\sqrt{m}}.
\end{equation}
\end{Prop}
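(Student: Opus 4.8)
The plan is to estimate $\|\beta_m^+(f)-\mu(f)\|_2^2$ by expanding the square and using the decay of matrix coefficients from Theorem \ref{t:edecay} (equivalently Proposition \ref{p:decay}). Without loss of generality assume $\mu(f)=0$, i.e. $f\in L^2_0(\mathcal{X})$, and since the estimate only involves $\|f\|_2$ we may assume $\|f\|_2=1$. We have
\begin{equation*}
\|\beta_m^+(f)\|_2^2=\frac{1}{m^2}\sum_{j=1}^m\sum_{k=1}^m\langle \pi(h_j)f,\pi(h_k)f\rangle=\frac{1}{m^2}\sum_{j=1}^m\sum_{k=1}^m\langle \pi(h_{j-k})f,f\rangle,
\end{equation*}
using that $\pi$ is unitary. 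Since $f$ is spherical and $h_t=\exp(tX_0)$, the vector $\pi(h_{j-k})f$ is again spherical (as $K$ need not commute with $h_t$ this is false in general; here we only use that $f$ is $K$-invariant so that $\langle\pi(h_n)f,f\rangle$ is a spherical matrix coefficient and Theorem \ref{t:edecay} applies). Theorem \ref{t:edecay} gives, for all sufficiently small $\epsilon>0$, that $|\langle\pi(h_n)f,f\rangle|\ll_\epsilon |n|^{-(1-\epsilon)}$ for $|n|\geq 1$, while for $n=0$ the matrix coefficient is $\|f\|_2^2=1$.

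Plugging this in, the diagonal terms $j=k$ contribute $\frac{1}{m^2}\cdot m=\frac1m$, and the off-diagonal terms contribute
\begin{equation*}
\frac{1}{m^2}\sum_{\substack{1\leq j,k\leq m\\ j\neq k}}|\langle\pi(h_{j-k})f,f\rangle|\ll_\epsilon \frac{1}{m^2}\sum_{n=1}^{m}(m-n)\,\frac{2}{n^{1-\epsilon}}\ll_\epsilon \frac{1}{m}\sum_{n=1}^m n^{\epsilon-1}\ll_\epsilon \frac{m^{\epsilon-1}\cdot m}{m}=m^{\epsilon-1},
\end{equation*}
where I used $\sum_{n\leq m} n^{\epsilon-1}\ll_\epsilon m^{\epsilon}$. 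Hence $\|\beta_m^+(f)\|_2^2\ll_\epsilon m^{\epsilon-1}$, and taking square roots (and replacing $\epsilon$ by $2\epsilon$, say) yields \eqref{key estimate}. For the SD case, Theorem \ref{t:edecay}'s role is played instead by the SD hypothesis $|\langle\pi(h_n)f,f\rangle|\ll n^{-\eta}$ with $\eta>1$ fixed; then $\sum_{n=1}^m n^{-\eta}$ converges, so the off-diagonal sum is $\ll \frac1m\sum_{n=1}^\infty n^{-\eta}\ll \frac1m$, giving $\|\beta_m^+(f)\|_2^2\ll \frac1m$ and hence \eqref{bmet}.

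The only genuinely delicate point is making sure Theorem \ref{t:edecay} is applicable: it requires \emph{both} vectors to be spherical elements of $L^2_0(\Gamma\backslash G)$. Here $f\in L^2_0$ is spherical by assumption, and the matrix coefficient we bound is $\langle\pi(h_n)f,f\rangle$ with the \emph{same} $f$ on both sides, so the hypothesis is met with $\varphi=\psi=f$; no regularity of $\pi(h_n)f$ is needed because we never assert that vector is spherical. Everything else is the elementary summation bound $\sum_{n\leq m}n^{\epsilon-1}\ll_\epsilon m^\epsilon$ and the standard unitarity manipulation above, so there is no real obstacle beyond bookkeeping of the $\epsilon$'s.
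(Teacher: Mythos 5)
Your proposal is correct and is essentially the paper's own argument: expand $\|\beta_m^+(f_0)\|_2^2$ into matrix coefficients $\langle\pi(h_{j-k})f_0,f_0\rangle$, apply the uniform decay bound $|t|^{-(1-\epsilon)}$ (resp. $|t|^{-\eta}$, $\eta>1$, in the SD case), and sum. The only cosmetic difference is that you invoke Theorem \ref{t:edecay} directly while the paper cites its ingredients (Proposition \ref{decay} and Corollary \ref{uod}); the reduction to $f_0=f-\mu(f)$ with $\|f_0\|_2\leq\|f\|_2$ and the observation that only the sphericity of $f$ (not of $\pi(h_n)f$) is needed are both handled correctly.
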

\begin{proof}
Let $f_0=f-\mu(f)\in  L^2_0(\cX)$. Proposition \ref{decay} and Corollary \ref{uod} imply that for any small $\epsilon>0$ for all
$|k|\geq 1$
 $$\left|\<\pi(h_k)f_0,f_0\>\right|\ll_{\epsilon}\frac{\|f_0\|_2^2}{|k|^{1-\epsilon}}.$$
Noting that $\beta_m^+(f_0)= \beta_m^+(f)-\mu(f)$ we have
\begin{align*}
%\|\beta_m^+(f)-\mu(f)\|_2^2&=&
\|\beta_m^+(f_0)\|_2^2
                    &=\frac{1}{m^2}\<\sum_{1\leq i\leq m}\pi(h_i)f_0,\sum_{1\leq j\leq m}\pi(h_j)f_0\>
                    = \frac{1}{m^2}\sum_{1\leq i,j\leq m}\<\pi(h_{i-j})f_0,f_0\>\\
                    &= \frac{1}{m^2}\sum_{|k|\leq m}\<\pi(h_k)f_0,f_0\>\#\{(i,j)\ |\ 1\leq i,j\leq m, i-j=k\}\\
                    &\leq \frac{1}{m}\sum_{|k|\leq m}|\<\pi(h_k)f_0,f_0\>|
                     \ll_{\epsilon} \frac{1}{m}(1+2\sum_{k=1}^{m}\frac{1}{k^{1-\epsilon}})\|f_0\|_2^2
                   \ll\frac{\|f_0\|_2^2}{m^{1-\epsilon}}\leq \frac{\|f\|_2^2}{m^{1-\epsilon}}.
\end{align*}
If the flow is SD the same argument with the stronger bound $\left|\<\pi(h_k)f_0,f_0\>\right|\ll_{\eta}\frac{\|f_0\|_2^2}{|k|^{\eta}},$
gives $\|\beta_m^+(f)-\mu(f)\|_2^2\ll \frac{\|f\|_2^2}{m}$.
\end{proof}
%\begin{Rem}
%If $\{h_t\}$ is SD for $\mathcal{X}$, then the same argument implies that for any $f\in L^2(\mathcal{X},\mu)$
%\begin{equation}
%\label{bmet}\|\beta_m^+(f)-\mu(f)\|_2\ll\frac{\|f\|_2}{m^{\frac12}}.
%\end{equation}
%\end{Rem}

\subsection{Groups of real rank one}
For groups of real rank one without property $(T)$ we have to take into account the contribution of the possible exceptional spectrum.
The argument is similar to the one used in \cite[Theorem 15]{Kelmer17b} for the orthogonal groups, and we include the details for the reader's convenience.
Doing this leads to the following.
\begin{Prop}
\label{ewot}Let $G$ be locally isomorphic to $SO(d+1,1)$ or $SU(d,1)$ with $d\geq 2$. Then for any unbounded discrete time one-parameter flow $H=\{h_m\}_{m\in\mathbb{Z}}$, for any sufficiently small $\epsilon> 0$ and for any spherical $f\in L^2(\mathcal{X})$ we have
$$\|\beta_m^+(f)-\mu(f)\|_2\ll_{\epsilon} \frac{\|f\|_2}{\sqrt{m}}+\sum_{s_k\in [\rho-\frac{1}{\kappa}, \rho)}\frac{|\langle f,\varphi_k\rangle|}{m^{\frac{\kappa}{2}(\rho-s_k)(1-\epsilon)}}, $$
where $\varphi_k$ are the finitely many spherical exceptional forms. When $G$ is locally isomorphic to $\SO(2,1)$ we have the same result with the first term replaced by $\frac{||f||_2}{m^{\frac12(1-\epsilon)}}$.%$\frac{\|f\|_2\log(m)}{\sqrt{m}}$.
\end{Prop}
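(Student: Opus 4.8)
The plan is to run, term by term against the spectral decomposition \eqref{sdc}, the same second-moment computation used in the proof of Proposition~\ref{pwt}, so that the tempered part (which decays fast) and each of the finitely many exceptional forms (which may decay slowly) are treated separately. Set $f_0 = f-\mu(f)$ and use \eqref{sdc} to write $f_0 = \sum_k \langle f,\varphi_k\rangle \varphi_k + f_{\mathrm{temp}}$, with $\varphi_k$ a normalized spherical vector in $\pi_{s_k}$, $s_k\in(0,\rho_0)$, and $f_{\mathrm{temp}}\in L^2_{\mathrm{temp}}(\G\bk G)$. Since $\beta_m^+(f_0) = \beta_m^+(f)-\mu(f)$, the triangle inequality gives
$$\|\beta_m^+(f)-\mu(f)\|_2 \le \sum_k |\langle f,\varphi_k\rangle|\,\|\beta_m^+(\varphi_k)\|_2 + \|\beta_m^+(f_{\mathrm{temp}})\|_2 .$$
For any vector $\psi$ lying in a single isotypic component, expanding the square and counting pairs with a fixed difference gives, exactly as in Proposition~\ref{pwt},
$$\|\beta_m^+(\psi)\|_2^2 \le \frac1m \sum_{|l|\le m} |\langle \pi(h_l)\psi,\psi\rangle| ,$$
which reduces everything to Proposition~\ref{rod}: apply \eqref{tede} to $\psi=f_{\mathrm{temp}}$ (decay exponent $\kappa\rho(1-\epsilon)$) and \eqref{comde} to $\psi=\varphi_k$ (decay exponent $\kappa(\rho-s_k)(1-\epsilon)$).

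It then remains to bound the tail sums $\sum_{l=1}^m l^{-\gamma}$, which are $O(1)$, $O(\log m)$, or $\asymp m^{1-\gamma}$ according as $\gamma>1$, $\gamma=1$, or $\gamma<1$. For the tempered part $\gamma=\kappa\rho(1-\epsilon)$; since $G$ is not locally isomorphic to $\SO(2,1)$ one has $\kappa\rho\ge 2$, so $\gamma>1$ for all sufficiently small $\epsilon$, and $\|\beta_m^+(f_{\mathrm{temp}})\|_2\ll \|f\|_2/\sqrt m$; in the $\SO(2,1)$ case $\kappa\rho=1$, the sum is $O(m^\epsilon)$, and the first term becomes $\|f\|_2/m^{(1-\epsilon)/2}$. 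For the exceptional forms, split the finite set of parameters. When $s_k<\rho-\tfrac1\kappa$ one has $\kappa(\rho-s_k)>1$ strictly, so one choice of $\epsilon$ small enough (depending on these finitely many forms) makes all these exponents exceed $1$, giving $\|\beta_m^+(\varphi_k)\|_2\ll 1/\sqrt m$; bounding $|\langle f,\varphi_k\rangle|\le\|f\|_2$ and summing the finitely many such terms yields $\ll\|f\|_2/\sqrt m$. When $s_k\in[\rho-\tfrac1\kappa,\rho)$ the exponent $\kappa(\rho-s_k)(1-\epsilon)\le 1$, the sum is $\asymp m^{1-\kappa(\rho-s_k)(1-\epsilon)}$, and $\|\beta_m^+(\varphi_k)\|_2\ll m^{-\frac\kappa2(\rho-s_k)(1-\epsilon)}$, contributing precisely the second term. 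Combining the three groups of terms gives the stated bound.

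The substance is entirely that of Proposition~\ref{pwt}; the points that require care are bookkeeping. One must invoke the finiteness of the spherical exceptional spectrum of $\G$ for these groups, both to bound $\sum_k|\langle f,\varphi_k\rangle|$ over the fast-decaying forms by $\ll\|f\|_2$ and to be allowed to pick a single $\epsilon$ that simultaneously pushes all of their decay exponents past $1$. One must also track which forms fall on either side of the threshold $\kappa(\rho-s_k)=1$, and observe that the borderline value $\kappa\rho=1$ — precisely the $\SO(2,1)$ case — is exactly what obstructs the $1/\sqrt m$ rate for the tempered part and forces the modified first term there.
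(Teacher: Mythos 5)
Your proposal is correct and follows essentially the same route as the paper: spectral decomposition into tempered part plus finitely many exceptional forms, the triangle inequality, the second-moment/pair-counting bound from Proposition~\ref{pwt} applied to each piece, and Proposition~\ref{rod} to control the resulting autocorrelation sums, with the same case split at the threshold $s_k=\rho-\tfrac{1}{\kappa}$ and the same treatment of the borderline $\kappa\rho=1$ for $\SO(2,1)$. No gaps.
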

\begin{proof}
Write $f= \langle f, 1\rangle +\sum_{k}\langle f,\varphi_k\rangle \varphi_k+ f_0$ with $f_0\in L^2_{\textrm{temp}}(\mathcal{X})$. Then
$$\|\beta_m^+(f)-\mu(f)\|_2\leq \sum_{k}|\langle f,\varphi_k\rangle \|\beta_m^+(\varphi_k)\|_2+ \|\beta_m^+(f_0)\|_2.$$
Using $(\ref{tede})$ and the same argument as in Proposition \ref{pwt}, we get
\begin{equation}
\label{tdecay}
\|\beta_m^+(f_0)\|_2\ll_\epsilon \left\{
  \begin{array}{lr}
    \frac{\|f\|_2}{m^{\frac12(1-\epsilon)}} & \textrm{$\fg=\mathfrak{so}(2,1)$}\\
    \\
    \frac{\|f\|_2}{\sqrt{m}}& \textrm{otherwise.}
  \end{array}
\right.
\end{equation}
Similarly, for each of the spherical exceptional forms $\varphi_k$ in $\pi_{s_k}$ we can bound
\[\|\beta_m^+(\varphi_k)\|_2^2\ll_{\epsilon} \frac{1}{m}(1+\sum_{k=1}^{m} \frac{1}{k^{\kappa(\rho-s_k)(1-\epsilon)}})\ll  \left\{
  \begin{array}{lr}
   \frac{1}{m} & \textrm{$s_k< \rho-\frac{1}{\kappa}$}\\
    \frac{1}{m^{\kappa(\rho-s_k)(1-\epsilon)}} &\ \textrm{$s_k\geq \rho-\frac{1}{\kappa}$.}
  \end{array}
\right.
\]
Hence
\[\|\beta_m^+(\varphi_k)\|_2\ll_{\epsilon} \left\{
  \begin{array}{lr}
   \frac{1}{\sqrt{m}} & \textrm{$s_k< \rho-\frac{1}{\kappa}$}\\
    \frac{1}{m^{\frac{\kappa}{2}(\rho-s_k)(1-\epsilon)}} &\ \textrm{$s_k\geq \rho-\frac{1}{\kappa}$.}
  \end{array}
\right.
\]
Combining this with $(\ref{tdecay})$ and using the bound $|\langle f, \varphi_k\rangle|\leq \|f\|_2$ for $s_k< \rho-\frac{1}{\kappa}$ concludes the proof.
%$$\|\beta_m^+(f)-\mu(f)\|_2\ll_{\epsilon} \frac{\|f\|_2}{m^{\frac12(1-\epsilon)}}+\sum_{s_k\in [\rho-\frac{1}{\kappa}, \rho)}\frac{|\langle f,\varphi_k\rangle|}{m^{\frac{\kappa}{2}(\rho-s_k)(1-\epsilon)}} ,$$
%where the first term could be replaced by $\frac{\|f\|_2}{m^{\frac12}}$ if $G$ is not locally isomorphic to $SO(2,1)$.
\end{proof}
\subsection{A variance estimate}
We now apply the above mean ergodic theorem for a variance estimate. Following \cite{Kelmer17b}, for any integer $m\geq 1$ and $f\in L^2(\mathcal{X})$, we define the set
$$\cC_{m,f}=\left\{x\in \mathcal{X}\ |\ \left|\beta^+_m(f)(x)-\mu(f)\right|\geq \tfrac{\mu(f)}{2}\right\}.$$
Applying the mean ergodic theorem we obtain the following estimate for the measure of $\cC_{m,f}$ when $f$ is an indicator function of a spherical set.
\begin{Prop}
\label{est}
For any spherical set $B\subset \mathcal{X}$ and $f=\chi_B$ its indicator function:
\begin{enumerate}
\item[(1)] If the flow is SD then
\begin{equation}
\label{estwt2}\mu(\mathcal{C}_{m,f})\ll\frac{1}{m\mu(B)}.
\end{equation}
\item[(2)] If $G$ has property (T), then for all sufficiently small $\epsilon>0$
\begin{equation}
\label{estwt1}\mu(\mathcal{C}_{m,f})\ll_{\epsilon}\frac{1}{m^{1-\epsilon}\mu(B)}.
\end{equation}
\item[(3)] If $G$ is locally isomorphic to $\SO(d+1,1)$ or $\SU(d,1)$ with $d\geq 2$, then
\begin{equation}
\label{estwot1}\mu(\mathcal{C}_{m,f})\ll \frac{1}{m\mu(B)}+ \frac{1}{\left(m\mu(B)^{\frac23}\right)^{2\tau(\G)/3}}.
\end{equation}
\item[(4)] If $G$ is locally isomorphic to $\SO(2,1)$, then for all sufficiently small $\epsilon>0$
\begin{equation}
\label{estwot2} \mu(\mathcal{C}_{m,f})\ll_{\epsilon}\frac{1}{m^{1-\epsilon}\mu(B)}+\frac{1}{\left(m^{1-\epsilon}\mu(B)^{1+\frac{\epsilon}{2}}\right)^{2\tau(\G)}}
\end{equation}
where for the last two cases, $\tau(\G)$ denotes the spectral gap parameter defined in \eqref{e:SpectralGap}.
\end{enumerate}
\end{Prop}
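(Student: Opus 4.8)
\emph{Proof sketch.} The plan is to deduce all four estimates from the effective mean ergodic theorems already established (Proposition~\ref{pwt} for cases~(1),(2) and Proposition~\ref{ewot} for cases~(3),(4)) by Chebyshev's inequality. Since $f=\chi_B$ we have $\mu(f)=\|f\|_2^2=\mu(B)$, so directly from the definition of $\cC_{m,f}$,
$$\mu(\cC_{m,f})\le \frac{4}{\mu(f)^2}\,\big\|\beta_m^+(\chi_B)-\mu(B)\big\|_2^2=\frac{4}{\mu(B)^2}\,\big\|\beta_m^+(\chi_B)-\mu(B)\big\|_2^2 .$$
For part~(1), inserting the bound \eqref{bmet} with $\|f\|_2^2=\mu(B)$ gives $\mu(\cC_{m,f})\ll \mu(B)/(m\mu(B)^2)=1/(m\mu(B))$, which is \eqref{estwt2}; part~(2) is identical using \eqref{key estimate} in place of \eqref{bmet}, and yields \eqref{estwt1}.

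For parts~(3) and~(4) I would start from Proposition~\ref{ewot}, which (with $\|f\|_2=\mu(B)^{1/2}$) gives
$$\big\|\beta_m^+(\chi_B)-\mu(B)\big\|_2\ll_\epsilon \frac{\mu(B)^{1/2}}{m^{1/2}}+\sum_{s_k\in[\rho-\frac1\kappa,\rho)}\frac{|\langle \chi_B,\varphi_k\rangle|}{m^{\frac{\kappa}{2}(\rho-s_k)(1-\epsilon)}},$$
with $m^{1/2}$ replaced by $m^{(1-\epsilon)/2}$ when $G$ is locally isomorphic to $\SO(2,1)$. The heart of the matter is to bound the finitely many exceptional inner products $\langle\chi_B,\varphi_k\rangle$. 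Since the residual forms among the $\varphi_k$ need not be bounded, one cannot use $\|\varphi_k\|_\infty$; instead I would apply H\"older's inequality, $|\langle\chi_B,\varphi_k\rangle|\le \|\chi_B\|_{p'}\,\|\varphi_k\|_p=\mu(B)^{1-1/p}\|\varphi_k\|_p$, and choose the exponent $p$ just below the critical value $\tfrac{2\rho}{\rho-s_k}$ up to which Proposition~\ref{nE} guarantees $\|\varphi_k\|_p<\infty$. Because there are only finitely many such forms and the slowest-decaying term is the one with $\rho-s_k$ minimal --- that is, $\rho-s_k=\tau(\Gamma)$ --- summing over $k$ leaves a single governing term. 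Feeding the resulting estimate for $\|\beta_m^+(\chi_B)-\mu(B)\|_2$ back into the Chebyshev bound and expanding the square (the cross term being dominated by the squares of the two pieces) produces the tempered contribution $\tfrac1{m\mu(B)}$ together with a second term governed by $\tau(\Gamma)$; a suitable choice of the H\"older exponent $p$, together with a mild relaxation of the decay exponent $\tfrac{\kappa}{2}(\rho-s_k)(1-\epsilon)$, turns this second term into $\big(m\mu(B)^{2/3}\big)^{-2\tau(\Gamma)/3}$ in case~(3), and into the stated expression in case~(4), where the tempered part only contributes at rate $m^{-(1-\epsilon)/2}$ and one takes $p$ essentially equal to the critical exponent.

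The main obstacle is the bookkeeping in parts~(3) and~(4): one must balance, as $s_k$ ranges over $[\rho-\tfrac1\kappa,\rho)$, the shrinking range of $L^p$-integrability of each residual form against the simultaneously degenerating decay rate $m^{-\frac{\kappa}{2}(\rho-s_k)(1-\epsilon)}$ of its ergodic averages, keep track of the several $\epsilon$-losses, and verify that the non-dominant exceptional forms and the cross terms are genuinely absorbed --- all uniformly in the lattice $\Gamma$. Everything apart from this optimization is a routine substitution into the mean ergodic theorems already proved. \qed
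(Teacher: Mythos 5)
Your proposal follows the paper's proof essentially verbatim: the Chebyshev bound $\mu(\cC_{m,f})\leq 4\mu(B)^{-2}\|\beta_m^+(\chi_B)-\mu(B)\|_2^2$ read off from the definition of $\cC_{m,f}$, the effective mean ergodic theorems (Propositions \ref{pwt} and \ref{ewot}), and H\"older combined with the $L^p$-integrability of the exceptional forms from Proposition \ref{nE}, followed by the same exponent optimization. The one detail worth making explicit in your "bookkeeping" step is that the paper first reduces, harmlessly, to the case $m\mu(B)\geq 1$ (since $\mu(\cC_{m,f})\leq 1$ the bounds are trivial otherwise); this is what guarantees the base $m\mu(B)^{2/3}$ is at least $1$, so that each exponent $\rho-s_k$ can be replaced by the uniform $\tau(\Gamma)$ when collapsing the finite sum over exceptional forms to a single term.
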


\begin{proof}
Since $\mu(\cC_{m,f})\leq 1$ we may assume that $m\mu(B)\geq 1$ since otherwise the result holds trivially.

Now, by definition if $x\in \mathcal{C}_{m,f}$, then $\left|\beta_m^+(f)-\mu(B)\right|\geq \frac{\mu(B)}2$ and hence
\begin{equation}
\label{estt}\left|\left|\beta_m^+(f)-\mu(B)\right|\right|_2^2\geq \frac{1}{4}\int_{\mathcal{C}_{m,f}}\mu(B)^2d\mu(x)=\frac{\mu(\mathcal{C}_{m,f})\mu(B)^2}4.
\end{equation}

On the other hand, for SD flows by Proposition \ref{pwt} we have
\begin{equation}
\label{e3}\|\beta_m^+(f)-\mu(B)\|_2^2\ll\frac{\|f\|_2^2}{m}= \frac{\mu(B)}{m}.
\end{equation}
Combining $(\ref{estt})$ and $(\ref{e3})$ we get $(\ref{estwt2})$.

If $G$ has property (T), then again by Proposition \ref{pwt} we have
\begin{equation}
\label{e2}\|\beta_m^+(f)-\mu(B)\|_2^2\ll_{\epsilon}\frac{\|f\|_2^2}{m^{1-\epsilon}}= \frac{\mu(B)}{m^{1-\epsilon}}.
\end{equation}
Combining $(\ref{estt})$ and $(\ref{e2})$, we get $(\ref{estwt1})$.

If $G$ is locally isomorphically to $\SO(d+1,1)$ or $\SU(d,1)$ with $d\geq 2$, then by Proposition \ref{ewot} we have
$$\|\beta_m^+(f)-\mu(B)\|_2^2\ll_{\epsilon} \frac{\|f\|^2_2}{m}+\sum_{s_k\in [\rho-\frac{1}{\kappa}, \rho)}\frac{|\langle f,\varphi_k\rangle|^2}{m^{\kappa(\rho-s_k)(1-\epsilon)}}.$$
Recall that by Proposition \ref{nE}, the exceptional forms $\varphi_k\in L^p(\mathcal{X})$ for any $1< p< \frac{2\rho}{\rho-s_k}$.
For $\epsilon>0$ sufficiently small let $p_k=\frac{2\rho}{(1+\epsilon/2)(\rho-s_k)}$, $q_k=\frac{p_k-1}{p_k}$, and use H\"older inequality to bound
$$|\langle f,\varphi_k\rangle|\leq \|\vf\|_{p_k} \|f\|_{q_k}\ll_\epsilon  \|f\|_{q_k}=\mu(B)^{\frac{1}{q_k}}.$$
With this bound we get
\begin{equation}
\label{enot1}\|\beta_m^+(f)-\mu(B)\|_2^2\ll_{\epsilon}\frac{\mu(B)}{m}+ \sum_{s_k\in [\rho-\frac{1}{\kappa}, \rho)}\frac{\mu(B)^{\frac{2}{q_k}}}{m^{\kappa(\rho-s_k)(1-\epsilon)}}.
\end{equation}
Combining $(\ref{estt})$ and $(\ref{enot1})$ we get
\begin{equation}\label{arro}
\mu(\mathcal{C}_{m,f})\ll_{\epsilon} \frac{1}{m\mu(B)}+ \sum_{s_k\in[\rho-\frac{1}{\kappa},\rho)}\frac{1}{\left(m^{\kappa(1-\epsilon)}\mu(B)^{\frac{1+\frac{1}{2}\epsilon}{\rho}}\right)^{\rho-s_k}}.  \end{equation}
%If $G$ is locally isomorphic to $SO(2,1)$, then $\kappa= 2$, $\rho=\frac12$ and this gives $(\ref{estwot2})$. If $G$ is locally isomorphic to $SO(d+1,1)$ or $SU(d,1)$ with $n\geq 2$. Then by Proposition \ref{ewot}, the first term of the right hand-side of $(\ref{arro})$ can be replaced by $\frac{1}{m\mu(B)}$.
Note that we always have that $\kappa\rho\geq 2$ so fixing $\epsilon<1/3$ sufficiently small so that $\frac{1+\frac12\epsilon}{\kappa\rho(1-\epsilon)}<\frac23$  we get
\begin{align*}
\mu(\mathcal{C}_{m,f})&\ll \frac{1}{m\mu(B)}+ \sum_{s_k\in[\rho-\frac{1}{\kappa},\rho)}\frac{1}{\left(m^{\kappa(1-\epsilon)}\mu(B)^{\frac{1+\frac{1}{2}\epsilon}{\rho}}\right)^{\rho-s_k}}\\
&=\frac{1}{m\mu(B)}+\sum_{s_k\in[\rho-\frac{1}{\kappa},\rho)}\frac{1}{\left(m\mu(B)^{\frac{1+\frac12\epsilon}{2\kappa\rho/3}}\right)^{\kappa(\rho-s_k)(1-\epsilon)}}\\
&<\frac{1}{m\mu(B)}+\sum_{s_k\in[\rho-\frac{1}{\kappa},\rho)}\frac{1}{\left(m\mu(B)^{\frac23}\right)^{2\kappa(\rho-s_k)/3}}.
\end{align*}
Finally, note that for all terms in the sum we have that $\tau(\G)\leq \rho-s_k\leq \frac{1}{\kappa}$ so that
\begin{align*}
\mu(\mathcal{C}_{m,f})&\ll\frac{1}{m\mu(B)}+\frac{1}{\left(m\mu(B)^{\frac23}\right)^{2\tau(\Gamma)/3}}.
\end{align*}

For $\SO(2,1)$ we have that $\kappa= 2$ and $\rho=\frac12$ and a similar argument gives \eqref{estwot2}
\end{proof}

\section{Applications to shrinking target problems}
We now combine all the ingredients and apply them to the shrinking target problems we described in the introduction. As before, throughout this section we let $G$ denote a connected semisimple Lie group with finite center and no compact factors, $\G\leq G$ an irreducible lattice $\{h_t\}_{t\in \R}$ an unbounded one-parameter subgroup, generating a discrete time flow given by the action of the discrete group $H=\{h_m\}_{m\in \bbZ}$ on the homogenous space $\cX=\G\bk G$.

\subsection{The hitting time problem}
Let $\{B_t\}_{t>0}$ be a monotone family of spherical shrinking targets in $\cX$.
We recall that by Lemma \ref{ght} and \ref{lower bound}, in order to prove Theorem \ref{t:hittingtime} it is enough to show that the estimate \eqref{up} holds. We show this in the following Lemma.
\begin{Lem}
Assume that either $G$ has property (T) or that it is simple of real rank one. Then $(\ref{up})$ holds for any $0<\delta<\frac12$.
\end{Lem}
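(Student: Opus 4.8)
Recall from \eqref{up} that we need, for all sufficiently small $\delta>0$,
$$\sum_{\ell\in\mathcal{N}}\mu\left(\mathcal{M}^i_{\floor{2^{\ell(1+\delta)}},\underline{B}_\ell}\right)<\infty.$$
The plan is to bound each term $\mu\left(\mathcal{M}^i_{m,\underline{B}_\ell}\right)$ with $m=\floor{2^{\ell(1+\delta)}}$ using the variance estimate of Proposition \ref{est}. The key observation is that if $x\in\mathcal{M}^i_{m,\underline{B}_\ell}$, then the orbit of $x$ hits $\underline{B}_\ell$ strictly fewer than $i$ times in the first $m$ steps, so in particular $\beta^+_m(\chi_{\underline{B}_\ell})(x)=\tfrac{1}{m}\#(xH_m^+\cap\underline{B}_\ell)<\tfrac{i}{m}$. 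Since $\mu(\underline{B}_\ell)\geq 2^{-(\ell+1)}$ while $m\geq 2^{\ell(1+\delta)}$, for $\ell$ large we have $\mu(\underline{B}_\ell)\gg \tfrac{2i}{m}$, so $\beta^+_m(\chi_{\underline{B}_\ell})(x)<\tfrac{i}{m}\leq\tfrac{\mu(\underline{B}_\ell)}{2}$, which forces $x\in\mathcal{C}_{m,\chi_{\underline{B}_\ell}}$. Hence $\mu\left(\mathcal{M}^i_{m,\underline{B}_\ell}\right)\leq\mu\left(\mathcal{C}_{m,\chi_{\underline{B}_\ell}}\right)$ for all large $\ell$.

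It then remains to invoke the appropriate case of Proposition \ref{est} with $B=\underline{B}_\ell$, $m=\floor{2^{\ell(1+\delta)}}$, and sum over $\ell$. In all cases the bound has the shape $\mu(\mathcal{C}_{m,\chi_B})\ll (m\mu(B))^{-c}$ for suitable constants (and in the non-$(T)$ rank one cases a second term with a positive power of $\tau(\G)$, with $\mu(B)$ raised to a power still giving geometric decay). Using $\mu(\underline{B}_\ell)\asymp 2^{-\ell}$ and $m\asymp 2^{\ell(1+\delta)}$, each term is bounded by $O\left(2^{-c\delta\ell}\right)$ (plus a similarly geometrically decaying term in the rank one cases, once one checks the exponent of $2^\ell$ coming from $m^a\mu(B)^b$ is strictly positive — this uses $m\asymp 2^{\ell(1+\delta)}$ rather than just $m\asymp 2^\ell$), and the resulting geometric series over $\ell\in\mathcal N\subseteq\{0,1,2,\dots\}$ converges. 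For the property $(T)$ case one uses \eqref{estwt1} (noting $1-\epsilon>0$); for SD flows \eqref{estwt2}; for $\SO(d+1,1)$ or $\SU(d,1)$, $d\geq 2$, \eqref{estwot1}; and for $\SO(2,1)$, \eqref{estwot2}. In every case one chooses $\delta$ (and $\epsilon$, where relevant) small enough that the exponent of $2^\ell$ in the bound is a fixed negative number.

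The only mildly delicate point — and the main obstacle — is the bookkeeping in the rank one cases without property $(T)$: one must verify that the extra exceptional-spectrum term in \eqref{estwot1} (respectively \eqref{estwot2}), when specialized to $m\asymp 2^{\ell(1+\delta)}$ and $\mu(\underline B_\ell)\asymp 2^{-\ell}$, still decays geometrically in $\ell$. Concretely, in \eqref{estwot1} the term is $\left(m\mu(B)^{2/3}\right)^{-2\tau(\G)/3}\asymp 2^{-(\ell(1+\delta)-2\ell/3)\cdot 2\tau(\G)/3}=2^{-(\ell(1/3+\delta))\cdot 2\tau(\G)/3}$, whose exponent is negative since $\tau(\G)>0$; the $\SO(2,1)$ case in \eqref{estwot2} is analogous, using that $1-\epsilon>0$ and the exponent of $\mu(B)$ is $1+\epsilon/2$, so the net power of $2^\ell$ is $\ell(1-\epsilon)(1+\delta)-\ell(1+\epsilon/2)$, which is positive once $\epsilon$ is small relative to $\delta$. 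Since there are only finitely many exceptional forms, this suffices. (Strictly, for the generic case where $G$ has property $(T)$ or is rank one without the exceptional-spectrum issue the estimate is immediate, and the rank one non-$(T)$ cases are exactly where one appeals to the spectral-gap-dependent bounds; the lemma as stated in this subsection only treats property $(T)$ or simple rank one, and the bound $\mu(\mathcal C_{m,f})\ll_\epsilon (m^{1-\epsilon}\mu(B))^{-1}$ from \eqref{estwt1}–\eqref{estwt2} plus the extra terms handled above covers all these cases.)
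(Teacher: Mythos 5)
Your proposal is correct and follows essentially the same route as the paper: the containment $\mathcal{M}^i_{m,\underline{B}_\ell}\subseteq\mathcal{C}_{m,\chi_{\underline{B}_\ell}}$ for $m>2i/\mu(\underline{B}_\ell)$, followed by the case-by-case application of Proposition \ref{est} with $m=\floor{2^{\ell(1+\delta)}}$ and $\mu(\underline{B}_\ell)\asymp 2^{-\ell}$, is exactly the paper's argument, and your exponent bookkeeping in the rank one non-$(T)$ cases matches the paper's choices of $\epsilon$.
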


\begin{proof}
Given a spherical measurable set, $B\subset \mathcal{X}$, let $f=\chi_B$ denote its indicator function, and for any $m\in \N$ let $\cM^i_{m,B}$ be as defined in \eqref{e:missing}. In particular, for any $x\in\mathcal{M}_{m,B}^i$ we have that $\beta_m^+(f)(x)=\frac{k}{m}$ for some $0\leq k< i$. Thus for any $m>\frac{2i}{\mu(B)}$, if $x\in \cM^i_{m,B}$ then
$$|\beta_m^+(f)(x)-\mu(f)|=\mu(B)-\frac{k}{m}> \mu(B)-\frac{i}{m}> \frac12\mu(B),$$
so $\mathcal{M}_{m,B}^i\subseteq \mathcal{C}_{m,f}$.

We recall that $\mathcal{N}$ is the set of integers $\ell\geq 0$ such that $\{\mu(B_t)\ |\ t> 0\}\cap [\frac{1}{2^{\ell}}, \frac{1}{2^{\ell+1}})$ is nonempty, and for each $\ell\in\mathcal{N}$, $\underline{B}_{\ell}=\bigcap_{\frac{1}{2^{\ell+1}}\leq\mu(B_t)<\frac{1}{2^{\ell}}}B_t$ with measure $\frac{1}{2^{\ell+1}}\leq \mu\left(\underline{B}_{\ell}\right)\leq \frac{1}{2^{\ell}}$. For each $\ell\in\mathcal{N}$, let $f_{\ell}=\chi_{\underline{B}_{\ell}}$. Fix $0<\delta<\frac12$ and let $j=\floor{\frac{\log 4i}{\delta\log 2}}$. Then for any $\ell\in \mathcal{N}\cap[j,\infty)$, $\mathcal{M}^i_{\floor{2^{\ell(1+\delta)}},\underline{B}_{\ell}}\subset \mathcal{C}_{\floor{2^{\ell(1+\delta)}}, f_{\ell}}$.
Hence to prove $(\ref{up})$ it is enough to show that
$$\sum_{\ell\in\mathcal{N}\cap\left[j,\infty\right)}\mu\left(\mathcal{C}_{\floor{2^{\ell(1+\delta)}}, f_{\ell}}\right)<\infty.$$

We now use Proposition \ref{est} to estimate $\mu\left(\mathcal{C}_{\floor{2^{\ell(1+\delta)}}, f_{\ell}}\right)$.
First, when $G$ has property $(T)$, Proposition \ref{est} with $\epsilon=\frac{\delta}{2(1+\delta)}$ gives that
$$\mu\left(\mathcal{C}_{\floor{2^{\ell(1+\delta)}}, f_{\ell}}\right)
\ll\frac{1}{\floor{2^{\ell(1+\delta)}}^{1-\epsilon}\mu\left(\underline{B}_{\ell}\right)}
%\asymp \frac{2^{\ell}}{2^{\ell(1+\delta)(1-\epsilon)}}\asymp 2^{(\epsilon-\delta+\epsilon\delta)\ell}.
\ll 2^{-\delta \ell/2},
$$
is summable.
%Take $\epsilon=\frac{\delta}{2(1+\delta)}$ such that $\epsilon-\delta+\epsilon\delta=-\frac12\delta$, then $\mu\left(\mathcal{C}_{\floor{2^{\ell(1+\delta)}}, f_{\ell}}\right)\ll_{\delta}2^{-\frac12\delta\ell}$ is summable.
Next, for $G$ locally isomorphic to $SO(d+1,1)$ or $SU(d,1)$ with $d\geq 2$, Proposition \ref{est} implies that
\begin{align*}
\mu\left(\mathcal{C}_{\floor{2^{\ell(1+\delta)}}, f_{\ell}}\right)&
\ll_{\epsilon}
\frac{1}{\floor{2^{\ell(1+\delta)}}\mu\left(\underline{B}_{\ell}\right)}+ \frac{1}{\left(\floor{2^{\ell(1+\delta)}}\mu\left(\underline{B}_{\ell}\right)^{\frac23}\right)^{2\tau(\G)/3}}\\
&\ll 2^{-\delta\ell}+ 2^{-2\tau(\G)\ell/9}
\end{align*}
is also summable.
Finally, when $G$ is locally isomorphic to $SO(2,1)$, Proposition \ref{est} with $\epsilon=\frac{\delta}{3+2\delta}$ implies that
%\begin{align*}
%\mu\left(\mathcal{C}_{\floor{2^{\ell(1+\delta)}}, f_{\ell}}\right)%&\ll_{\epsilon}\frac{1}{\floor{2^{\ell(1+\delta)}}^{1-\epsilon}\mu\left(\underline{B}_{\ell}\right)}+\sum_{s_k\in(0, \frac12)}\frac{1}{\left(\floor{2^{\ell(1+\delta)}}^{1-\epsilon}\mu\left(\underline{B}_{\ell}\right)^{1+\frac{\epsilon}{2}}\right)^{1-2s_k}}\\
%&\asymp 2^{(\epsilon-\delta+\epsilon\delta)\ell}+ \sum_{s_k\in (0,\frac12)}2^{(\frac32\epsilon-\delta+\epsilon\delta)(1-2s_k)}
%\end{align*}
%Take $\epsilon=\frac{\delta}{3+2\delta}$. Then $\frac32\epsilon-\delta+\epsilon\delta=-\frac12\delta$ and $\epsilon-\delta+\epsilon\delta=-\frac12\delta-\frac12\epsilon< -\frac12\delta$.
$$\mu\left(\mathcal{C}_{\floor{2^{\ell(1+\delta)}}, f_{\ell}}\right)\ll_{\delta}2^{-\delta\ell/2}+2^{-\delta\tau(\Gamma)\ell}$$
is summable.
\end{proof}

\subsection{Logarithm laws}
We now apply our results to the special case where  $\G\bk G$ is not compact and the shrinking targets are cusp neighborhoods, to prove Corollary \ref{loglaws}, establishing logarithm laws for one-parameter flows.
Since $\G\bk G$ is not compact when $G$ is of higher rank, we may assume that $\G$ is a congruence group. We also assume here that $G$ is not locally isomorphic to a product of copies of $\SL_2(\R)$ and $\SL_2(\bbC)$, and note that, in that case, Corollary \ref{loglaws} already follows from  \cite{KelmerMohammadi12}.
%We do not assume here that $G$ is of rank one or has $(T)$ , however, since $\G\bk G$ is not compact when $G$ is of higher rank, we may assume that $\G$ is a congruence group. We also assume here that $G$ is not locally isomorphic to a product of copies of $\SL_2(\R)$ and $\SL_2(\bbC)$, and note that, in that case, Corollary \ref{loglaws} already follows from  \cite{KelmerMohammadi12}. Finally note that under these assumptions if $G$ is not of rank one, then $\G$ is a congruence group and $G$ has no factor locally isomorphic to $\SL_2(\R)$ so, following the argument in Remarks \ref{r:conditional} and \ref{r:conditional2}, the conclusion of Theorem \ref{t:hittingtime} holds in this more general setting.

Now, fix a $K\times K$-invariant function $d(\cdot,\cdot)$  on $\Gamma\backslash G\times \G\bk G$ (coming from a distance function on $\G\bk \cH$) satisfying $\eqref{e:cuspdecay}$ as in the introduction. For a fixed reference point $x_0\in \cX$ let
$$B_t=\{x\in \mathcal{X}\ |\ d(x,x_0)> t\}$$
be the corresponding spherical cusp neighborhood. We first note that the easy half of Borel-Cantelli lemma, together with a standard continuity argument, implies that the upper bound,
$$\limsup_{t\to\infty}\frac{d(xh_t,x_0)}{\log t}\leq \frac{1}{\varkappa},$$
holds for a.e. $x\in \mathcal{X}$. For the lower bound,  note that by Theorem \ref{t:hittingtime}, when $G$ has property $(T)$ or is of real rank one the limit $(\ref{hit})$ holds for a.e. $x\in \cX$. When $G$ is of higher rank without property $(T)$, since $\Gamma$ is a congruence subgroup and $G$ is not locally isomorphic to a product of copies of $SL_2(\R)$ and $SL_2(\bbC)$, there is no factor of $G$ locally isomorphic to $SL_2(\R)$. Thus, using the known bounds towards the Selberg-Ramanujan conjecture (see Remarks \ref{r:conditional} and \ref{r:conditional2}) the limit \eqref{t:hittingtime} still holds for a.e. $x\in \cX$.
The proof of Corollary \ref{loglaws} now follows immediately from the following Lemma.
\begin{Lem}
For any $x\in \cX$ satisfying $(\ref{hit})$ we have that
$$\limsup_{t\to\infty}\frac{d(xh_t,x_0)}{\log t}\geq \frac{1}{\varkappa}.$$
\end{Lem}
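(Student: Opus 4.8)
The plan is to argue by contradiction, first replacing the real‑time $\limsup$ in the statement by its discrete‑time shadow. Since $\N\subset\R$ we have $\limsup_{t\to\infty}\frac{d(xh_t,x_0)}{\log t}\ge\limsup_{m\to\infty,\,m\in\N}\frac{d(xh_m,x_0)}{\log m}$, so it is enough to bound the latter below by $\tfrac1\varkappa$. This is precisely the quantity governed by the hitting function, because by the definition of the cusp neighbourhoods $B_t=\{y\in\cX : d(y,x_0)>t\}$ one has $\tau_{B_t}(x)=\min\{m\in\N : d(xh_m,x_0)>t\}$.

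Suppose then, for contradiction, that $\limsup_{m\to\infty}\frac{d(xh_m,x_0)}{\log m}=c<\tfrac1\varkappa$, and fix $\beta$ with $c<\beta<\tfrac1\varkappa$. By definition of $\limsup$ there is $M_0$ such that $d(xh_m,x_0)\le\beta\log m$ for every $m\ge M_0$. The key point is that for all sufficiently large $t$ none of the finitely many points $xh_1,\dots,xh_{M_0-1}$ lies in $B_t$ — indeed each has finite distance to $x_0$, so this holds once $t>\max_{1\le j<M_0}d(xh_j,x_0)$. Consequently, for such $t$, any $m$ with $xh_m\in B_t$ satisfies $m\ge M_0$, and then $t<d(xh_m,x_0)\le\beta\log m$, i.e. $m>e^{t/\beta}$. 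Hence $\tau_{B_t}(x)>e^{t/\beta}$, so $\log\tau_{B_t}(x)\ge t/\beta$, for all large $t$. (If for some such $t$ no such $m$ exists then $\tau_{B_t}(x)=\infty$ and \eqref{hit} fails outright, so we may assume throughout that $\tau_{B_t}(x)<\infty$.)

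Now feed this into the exponential cusp decay \eqref{e:cuspdecay}: for any $\epsilon>0$ and all large $t$ we have $-\log\mu(B_t)\le(\varkappa+\epsilon)t$, whence
\[
\frac{\log\tau_{B_t}(x)}{-\log\mu(B_t)}\ \ge\ \frac{t/\beta}{(\varkappa+\epsilon)t}\ =\ \frac{1}{\beta(\varkappa+\epsilon)}.
\]
Since $\beta\varkappa<1$ we may pick $\epsilon>0$ with $\beta(\varkappa+\epsilon)<1$; then $\liminf_{t\to\infty}\frac{\log\tau_{B_t}(x)}{-\log\mu(B_t)}\ge\frac{1}{\beta(\varkappa+\epsilon)}>1$, contradicting \eqref{hit}, which forces this limit to equal $1$. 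This contradiction establishes the lemma.

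I do not expect a genuine obstacle: the argument is a short unwinding of the definitions of $\tau_{B_t}$ and of the cusp neighbourhoods. The two places that need a little care are the passage from the continuous‑time $\limsup$ in the statement to the integer‑time $\limsup$ that $\tau_{B_t}$ controls (dispatched by $\N\subset\R$), and the treatment of the finitely many small‑index orbit points $xh_1,\dots,xh_{M_0-1}$ (dispatched by the observation that the nested family $\{B_t\}$ eventually excludes any fixed point of $\cX$).
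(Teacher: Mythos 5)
Your proof is correct and is essentially the paper's argument run in contrapositive form: both rest on combining the upper-bound half of \eqref{hit} with the bound $-\log\mu(B_t)\le(\varkappa+\epsilon)t$ from \eqref{e:cuspdecay}, the paper directly exhibiting the witnessing times $s_\ell=\tau_{B_\ell}(x)$ where you instead derive $\log\tau_{B_t}(x)\ge t/\beta$ from the assumed failure of the conclusion. The two treatments are logically equivalent and equally complete.
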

\begin{proof}
By $(\ref{hit})$ and $(\ref{e:cuspdecay})$, for any small $\epsilon>0$, there exists $t_0>0$ such that for any $t\geq t_0$ we have that $\frac{\log\tau_{B_t}(x)}{-\log(\mu(B_t))}< 1+\epsilon$ and $\frac{-\log\mu(B_t)}{t}< \varkappa+\epsilon$, hence, $\frac{\log\tau_{B_t}(x)}{t}<(\varkappa+\epsilon)(1+\epsilon)$. Moreover, by the minimality of $\tau_{B_t}(x)$ we have that $xh_{\tau_{B_t}(x)}\in B_t$, or equivalently that $d(xh_{\tau_{B_t(x)}},x_0)> t$. For any integer $\ell\in \N$ let $s_{\ell}=\tau_{B_{\ell}}(x)$. The condition that $x$ satisfies \eqref{hit}, implies that $s_{\ell}\to \infty$ as $\ell\to\infty$, and for any $\ell\geq t_0$, we have
$$\frac{d(xh_{s_{\ell}}, x_0)}{\log s_{\ell}}=\frac{d(xh_{\tau_{B_{\ell}}(x)}, x_0)}{\log \tau_{B_{\ell}}(x)}> \frac{d(xh_{\tau_{B_{\ell}}(x)}, x_0)}{(\varkappa+\epsilon)(1+\epsilon)\ell}> \frac{1}{(\varkappa+\epsilon)(1+\epsilon)}.$$
To conclude, for any $x$ satisfying $(\ref{hit})$ and for any $\epsilon> 0$ we can find an unbounded sequence $\{s_{\ell}\}$ such that $\frac{d(xh_{s_{\ell}}, x_0)}{\log s_{\ell}}>\frac{1}{(\varkappa+\epsilon)(1+\epsilon)}$, implying that $\limsup_{t\to\infty}\frac{d(xh_t,x_0)}{\log t}\geq \frac{1}{\varkappa}$.
\end{proof}
\begin{Rem}\label{r:sphericalcusps}
In \cite{KleinbockMargulis1999,AthreyaMargulis14}, the cusp neighborhoods are defined by a distance function $\tilde{d}(\cdot,\cdot)$ on $\G\bk G$ induced from a left $G$-invariant and bi-$K$-invariant metric (resp. norm like pseudo metric) on $G$, rather than a distance function on $\G\bk \cH$ as in our case. In order to apply the above argument in this case we can replace the non spherical distance function $\tilde{d}(x, x_0)$ by the spherical distance function $d(x, x_0):=\inf_{k\in K}\tilde d(xk,x_0)$. Since $\tilde d(\cdot, \cdot)$ is induced from a bi-$K$-invariant metric, there exists some constant $C>0$ such that for any $x\in \mathcal{X}$, $0\leq \tilde{d}(x,x_0)-d(x,x_0)\leq C$. Thus the corresponding cusp neighborhoods satisfy  $\tilde B_{t+C}\subset B_t\subset \tilde B_t$ for any $t>0$, where $\tilde{B_t}=\{x\in\mathcal{X}\ |\ \tilde{d}(x,x_0)> t\}$. Now from \cite{KleinbockMargulis1999,AthreyaMargulis14} we have that  $\mu(\tilde B_t)\asymp e^{-\varkappa t}$ for some $\varkappa>0$ and hence also have that $\mu(B_t)\asymp e^{-\varkappa t}$. Since we also have $\tilde{d}(x,x_0)=d(x,x_0)+O(1)$, we get that for a.e. $x\in \mathcal{X}$
$$\limsup_{t\to\infty}\frac{\tilde{d}(xh_t,x_0)}{\log t}=\limsup_{t\to\infty}\frac{d(xh_t,x_0)}{\log t}= \frac{1}{\varkappa}.$$
\end{Rem}

\subsection{Orbits eventually always hitting}
We now give the proof of Theorem \ref{t:ae}.
Given a monotone sequence of spherical shrinking targets $\{B_m\}_{m\in\N}$, let $f_m=\chi_{B_m}$.
%Recalling Lemma \ref{l:EventuallyHitting}  and noting that $\cM^1_{B,m)\subseteq \cC_{f,m}$  $(\ref{e:summable})$ implies $\sum_{j=1}^{\infty}\mu(\mathcal{C}_{2^{j\pm 1}, f_{2^j}})<\infty$.
By Proposition \ref{est}, for SD flows
$$\mu\left(\mathcal{C}_{2^{j\pm 1},f_{2^j}}\right)\ll\frac{1}{2^{j\pm 1}\mu(B_{2^j})}\asymp \frac{1}{2^j\mu(B_{2^j})}.$$
and if $G$ is locally isomorphic to $\SO(d+1,1)$ or $\SU(d,1)$ with $d\geq 2$, then
\begin{align*}
\mu\left(\mathcal{C}_{2^{j\pm 1},f_{2^j}}\right)&\ll \frac{1}{2^j\mu(B_{2^j})}+\frac{1}{(2^{j}\mu(B_{2^j})^{\frac23})^{2\tau(\G)/3}}\\
&< \frac{1}{2^j\mu(B_{2^j})}+\frac{2^{-2j/9}}{(2^{j}\mu(B_{2^j}))^{4\tau(\G)/9}}.
\end{align*}
Since $\sum_{j=0}^{\infty}\frac{1}{2^j\mu(B_{2^j})}<\infty$, there exists $j_0$ such that for any $j\geq j_0$, $2^j\mu(B_{2^j})>1$. Hence for all $j\geq j_0$, we have
$$\mu\left(\mathcal{C}_{2^{j\pm 1},f_{2^j}}\right)\ll_{\epsilon}\frac{1}{2^j\mu(B_{2^j})}+2^{-2j/9}.$$
Thus in both cases, \eqref{e:summable} implies that
$\sum_{j=0}^{\infty}\mu\left(\mathcal{C}_{2^{j\pm 1},f_{2^j}}\right)<\infty$, and since  $\cM^1_{m,B}\subseteq \cC_{m,f}$ with $f= \chi_B$, then
$\sum_{j=0}^{\infty}\mu\left(\cM^1_{2^{j\pm 1},B_{2^j}}\right)<\infty$ and Theorem \ref{t:ae} follows from Lemma \ref{l:EventuallyHitting}.

\subsection{Dynamical Borel-Cantelli for SD flows.}
We now give the proof of Theorem \ref{t:SBC}, by showing that rate of decay of matrix coefficients for SD flows is sufficient to show that for any sequence of spherical shrinking targets,  $\{B_m\}_{m\in\mathbb{N}}$,
the family of functions $f_m(x)=\chi_{B_m}(xh_m)$ satisfy condition \eqref{keybound} of Lemma \ref{gbc}.
\begin{Prop}
\label{quasi.prop} Let $\{B_m\}_{m\in\mathbb{N}}$ denote any sequence of spherical sets in $\mathcal{X}$ and let $f_m(x)=\chi_{B_m}(xh_m)$.
If the flow is SD then
%for $\mathcal{X}$ or $G$ is locally isomorphic to either $SO(d+1,1)$ or $SU(d,1)$ with $n\geq 2$, $\left\{\mu(B_m)\right\}_{m\in\N}$ is nonincreasing and $\left\{m\mu(B_m)\right\}_{m\in\mathbb{N}}$ is bounded, then
there exists some constant $C>0$ such that for all $n> m\geq 1$
$$\int_{\mathcal{X}}\left(\sum_{i=m}^nf_i(x)-\sum_{i=m}^n\mu(f_i)\right)^2d\mu(x)\leq C\sum_{i=m}^n\mu(f_i).$$
\end{Prop}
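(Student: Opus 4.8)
The plan is to expand the square and recognize each covariance as a matrix coefficient of a mean‑zero spherical pair, to which the SD bound directly applies. Writing out
\[
\int_{\mathcal{X}}\Big(\sum_{i=m}^n f_i(x)-\sum_{i=m}^n\mu(f_i)\Big)^2 d\mu(x)=\sum_{i=m}^n\sum_{j=m}^n\Big(\int_{\mathcal{X}} f_if_j\,d\mu-\mu(f_i)\mu(f_j)\Big),
\]
it suffices to estimate each summand. For $i=j$, since $f_i(x)=\chi_{B_i}(xh_i)$ is an indicator, $f_i^2=f_i$, so the term equals $\mu(f_i)-\mu(f_i)^2\le\mu(f_i)$. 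For $i\neq j$, say $i<j$, I would first use the $H$‑invariance of $\mu$ (substituting $y=xh_i$) to write, using the conventions of the excerpt,
\[
\int_{\mathcal{X}} f_i(x)f_j(x)\,d\mu(x)=\int_{\mathcal{X}}\chi_{B_i}(y)\chi_{B_j}(yh_{j-i})\,d\mu(y)=\langle\pi(h_{j-i})\chi_{B_j},\chi_{B_i}\rangle.
\]

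Next I would decompose $\chi_{B_\ell}=\mu(B_\ell)+\psi_\ell$ with $\psi_\ell\in L^2_0(\mathcal{X})$; since $B_\ell$ is spherical, $\chi_{B_\ell}$ and hence $\psi_\ell$ is $K$‑invariant, i.e.\ spherical. Expanding the bilinear form and using $\pi(h_t)\mathbf 1=\mathbf 1$ together with $\langle\psi_\ell,\mathbf 1\rangle=0$, all the cross terms containing a constant factor vanish (and $\langle\pi(h_{j-i})\mathbf 1,\mathbf 1\rangle=1$ since $\mu$ is a probability measure), leaving precisely
\[
\int_{\mathcal{X}} f_if_j\,d\mu-\mu(f_i)\mu(f_j)=\langle\pi(h_{j-i})\psi_j,\psi_i\rangle .
\]
Now the SD hypothesis applies: $\psi_i,\psi_j$ are spherical elements of $L^2_0(\mathcal{X})$ and $|j-i|\ge 1$, so there is $\eta>1$ with $|\langle\pi(h_{j-i})\psi_j,\psi_i\rangle|\ll_\eta \|\psi_i\|_2\|\psi_j\|_2/|j-i|^\eta$. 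Since $\|\psi_\ell\|_2^2=\mu(B_\ell)-\mu(B_\ell)^2\le\mu(f_\ell)$, this gives $|\langle\pi(h_{j-i})\psi_j,\psi_i\rangle|\ll_\eta \sqrt{\mu(f_i)\mu(f_j)}\,/\,|j-i|^\eta$.

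Finally I would sum: using $\sqrt{\mu(f_i)\mu(f_j)}\le\tfrac12(\mu(f_i)+\mu(f_j))$ and symmetry,
\[
\sum_{\substack{i,j=m\\ i\ne j}}^n|\langle\pi(h_{j-i})\psi_j,\psi_i\rangle|\ll_\eta \sum_{i=m}^n\mu(f_i)\sum_{k\ge 1}\frac1{k^{\eta}}\ll_\eta \sum_{i=m}^n\mu(f_i),
\]
the inner sum being finite exactly because $\eta>1$. Combined with the diagonal estimate this yields the bound with $C$ depending only on $\eta$, hence only on the flow. The only genuinely substantive point is the reduction of the covariance to the matrix coefficient $\langle\pi(h_{j-i})\psi_j,\psi_i\rangle$ of \emph{spherical} mean‑zero vectors, so that the SD decay is available; after that, summability of $\sum_k k^{-\eta}$ for $\eta>1$ — which is precisely the defining property of an SD flow, and the reason the merely $\epsilon$‑off‑summable rate of Theorem \ref{t:edecay} does not suffice here — completes the argument with no further obstacle beyond bookkeeping.
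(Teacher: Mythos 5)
Your proposal is correct and follows essentially the same route as the paper: expand the variance, identify each off-diagonal covariance with the matrix coefficient $\langle\pi(h_{i-j})(\chi_{B_i}-\mu(B_i)),\chi_{B_j}-\mu(B_j)\rangle$ of spherical mean-zero vectors, apply the SD bound, and sum using $\sum_k k^{-\eta}<\infty$. The only (immaterial) difference is in the final bookkeeping, where you use $\sqrt{\mu_i\mu_j}\le\tfrac12(\mu_i+\mu_j)$ while the paper splits the sum according to whether $\mu_i\le\mu_j$ and bounds $\sqrt{\mu_i\mu_j}\le\mu_j$; both give the same conclusion.
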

\begin{proof}
We first note that by direct computation
$$\int_{\mathcal{X}}\left(\sum_{i=m}^nf_i(x)-\sum_{i=m}^n\mu(f_i)\right)^2d\mu(x)=\sum_{m\leq i,j\leq n}\left(\int_{\mathcal{X}}f_i(x)f_j(x)d\mu(x)-\mu(f_i)\mu(f_j)\right).$$
To simplify notation, let $\mu_i=\mu(f_i)=\mu(B_i)$ and $\mu_{i,j}=\int_{\mathcal{X}}f_i(x)f_j(x)d\mu(x)$, and note that $\mu_{i,i}=\mu_i$ (since for $f_i(x)=\chi_{B_i}(xh_i)$, $f_i^2=f_i$). With these notations we have
\begin{align*}
\int_{\mathcal{X}}\left(\sum_{i=m}^nf_i(x)-\sum_{i=m}^n\mu(f_i)\right)^2d\mu(x)&=\sum_{i=m}^n\mu_i+\sum_{m\leq i\neq j\leq n}\mu_{i,j}-\sum_{1\leq i,j\leq n}\mu_i\mu_j\\
  &\leq\sum_{i=m}^n\mu_i+\sum_{m\leq i\neq j\leq n}(\mu_{i,j}-\mu_i\mu_j).
\end{align*}
Thus it suffices to show that
$$\sum_{m\leq i\neq j\leq n}(\mu_{i,j}-\mu_i\mu_j)\ll \sum_{i=m}^n\mu_i.$$
Since we assume the flow is SD, there exists some constant $\eta>1$ such that for any spherical $\varphi,\psi \in  L^2_0(\G\bk G)$, for all $|t|\geq 1$
$$|\langle \pi(h_t)\varphi, \psi\rangle |\ll \frac{\|\varphi\|_2\|\psi\|_2}{|t|^{\eta}}.$$
In particular, for any $i\neq j$, taking $\varphi=\chi_{B_i}-\mu(B_i)$ and $\psi=\chi_{B_j}-\mu(B_j)$ we get that
%$$\left|\langle \pi(h_i)\varphi,\pi(h_j)\psi\rangle\right| =\left|\langle f_i-\mu(f_i), f_j-\mu(f_j)\rangle\right|= \left|\mu_{ij}-\mu_i\mu_j\right|.$$
%On the other hand,
\begin{align*}
 \left|\mu_{i,j}-\mu_i\mu_j\right|&=\left|\langle \pi(h_i)\varphi, \pi(h_j)\psi\rangle\right|\\
		 &=\left|\langle\pi(h_{i-j})\varphi, \psi\rangle\right|\\
                 &\ll \frac{\|\varphi\|_2\|\psi\|_2}{|i-j|^{\eta}}
                 \leq \frac{\sqrt{\mu_i\mu_j}}{|i-j|^{\eta}}.
\end{align*}
%Thus for any $i\neq j$, $|\mu_{ij}-\mu_i\mu_j|\ll \frac{\sqrt{\mu_i\mu_j}}{|i-j|^{\eta}}$.
It now suffices to show that
$$\sum_{m\leq i\neq j\leq n}\frac{\sqrt{\mu_i\mu_j}}{|j-i|^{\eta}}\ll \sum_{i=m}^n \mu_i.$$
We rewrite the sum on the left as
$$\sum_{m\leq i\neq j\leq n}\frac{\sqrt{\mu_i\mu_j}}{|j-i|^{\eta}}
=\sum_{\substack{m\leq i\neq j\leq n \\ \mu_i\leq \mu_j}}\frac{\sqrt{\mu_i\mu_j}}{|j-i|^{\eta}}+ \sum_{\substack{m\leq i\neq j\leq n \\ \mu_i> \mu_j}}\frac{\sqrt{\mu_i\mu_j}}{|j-i|^{\eta}},$$
and using symmetry we can bound
\begin{align*}
\sum_{m\leq i\neq j\leq n}\frac{\sqrt{\mu_i\mu_j}}{|j-i|^{\eta}}&\leq 2\sum_{\substack{m\leq i\neq j\leq n \\ \mu_i\leq \mu_j}}\frac{\sqrt{\mu_i\mu_j}}{|j-i|^{\eta}}\\
     &\leq 2\sum_{\substack{m\leq i\neq j\leq n \\ \mu_i\leq \mu_j}}\frac{\mu_j}{|j-i|^{\eta}}\\
     &= 2\sum_{m\leq j\leq n}\mu_j(\sum_{\substack{m\leq i\leq n \\ \mu_i\leq \mu_j, i\neq j}}\frac{1}{|j-i|^{\eta}}).
\end{align*}
We can bound the inner sum by the convergent series $2\sum_{i=1}^{\infty}\frac{1}{i^{\eta}}$
%$$\sum_{\substack{m\leq i\leq n \\ i\neq j \\ \mu_i\leq \mu_j}}\frac{1}{|j-i|^{\eta}}
%\leq \sum_{\substack{m\leq i\leq n \\ i\neq j}}\frac{1}{|j-i|^{\eta}}
%\leq  2\sum_{i=1}^{\infty}\frac{1}{i^{\eta}},$$
%and since $\eta>1$ the series on the right converges,
thus concluding the proof.
%which is uniformly bounded since $\eta>1$. Hence $\sum_{m\leq i\neq j\leq n}\frac{\sqrt{\mu_i\mu_{ij}}}{|j-i|^{\eta}}\ll \sum_{i=m}^n \mu_j$ as claimed.
\end{proof}

\begin{proof}[Proof of Theorem \ref{t:SBC}]
Let $\{B_m\}_{m\in\mathbb{N}}$ denote any sequence of spherical sets in $\mathcal{X}$ satisfying that $\sum_{m=1}^{\infty}\mu(B_m)=\infty$.
For $f_m(x)=\chi_{B_m}(xh_m)$ we have that
$$\sum_{1\leq j\leq m}f_j(x)=\#\{1\leq j\leq m: xh_j\in B_j\},$$ and hence, Proposition \ref{quasi.prop} and Lemma \ref{gbc} imply that for a.e. $x\in \mathcal{X}$
\begin{displaymath}
\lim_{m\to\infty}\frac{\#\{1\leq j\leq m: xh_j\in B_j\}}{\sum_{1\leq j\leq m} \mu(B_j)}= 1.\qedhere
\end{displaymath}
\end{proof}

\subsection{Dynamical Borel-Cantelli for rank one groups}
We now turn to the case where $G$ is locally isomorphic to $\SO(d+1,1)$ or $\SU(d,1)$ with $d\geq 2$
(if $G$ is of real rank one with property $(T)$, then every unbounded flow is SD and we can apply Theorem \ref{t:SBC}). Combining bounds coming from the mean ergodic theorem, together with Lemma \ref{gbc} we show that any monotone sequence of spherical shrinking targets, $\{B_m\}_{m\in \N}$, is BC. In this case, the argument is different when the sequence $\{m\mu(B_m)\}_{m\in\N}$ is bounded or unbounded, and we treat these cases separately.
We first deal with the bounded case by showing the following.

\begin{Prop}\label{p:bounded}
For $G$ locally isomorphic to $\SO(d+1,1)$ or $\SU(d,1)$ with $d\geq 2$, let $\{B_m\}_{m\in \N}$ denote a sequence of spherical sets satisfying that $\left\{m\mu(B_m)\right\}_{m\in\N}$ is uniformly bounded. Let $f_m(x)=\chi_{B_m}(xh_m)$,
then there exists some constant $C>0$ such that for all $n> m\geq 1$
$$\int_{\mathcal{X}}\left(\sum_{i=m}^nf_i(x)-\sum_{i=m}^n\mu(f_i)\right)^2d\mu(x)\leq C\sum_{i=m}^n\mu(f_i).$$
\end{Prop}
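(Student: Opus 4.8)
The plan is to run the expansion from the proof of Proposition~\ref{quasi.prop}, but to estimate the resulting covariance sum by splitting it, via the spectral decomposition \eqref{sdc}, into a tempered part and an exceptional part, and to control the latter using the $L^p$-bounds of Proposition~\ref{nE} together with the hypothesis $\mu(B_i)\le c_0/i$, where $c_0:=\sup_m m\mu(B_m)<\infty$. Writing $\mu_i=\mu(B_i)$, $\mu_{i,j}=\int_{\cX}f_if_j\,d\mu$ and $\varphi_i=\chi_{B_i}-\mu_i\in L^2_0(\cX)$, one has, exactly as in Proposition~\ref{quasi.prop},
\[
\int_{\cX}\Big(\sum_{i=m}^n f_i-\sum_{i=m}^n\mu_i\Big)^2 d\mu=\sum_{i=m}^n(\mu_i-\mu_i^2)+\sum_{m\le i\ne j\le n}\langle\pi(h_{i-j})\varphi_i,\varphi_j\rangle,
\]
so it is enough to bound the covariance sum by $O\big(\sum_{i=m}^n\mu_i\big)$. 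Using \eqref{sdc}, write $\varphi_i=\sum_k c_{i,k}\varphi_k+\varphi_i^{\mathrm{temp}}$ with $c_{i,k}=\langle\chi_{B_i},\varphi_k\rangle$, the sum over the finitely many spherical exceptional forms $\varphi_k\in\pi_{s_k}$, $s_k\in(0,\rho_0)$; since $\pi(h_l)$ preserves each irreducible summand, orthogonality gives $\langle\pi(h_l)\varphi_i,\varphi_j\rangle=\sum_k c_{i,k}\overline{c_{j,k}}\langle\pi(h_l)\varphi_k,\varphi_k\rangle+\langle\pi(h_l)\varphi_i^{\mathrm{temp}},\varphi_j^{\mathrm{temp}}\rangle$. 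For the tempered part, \eqref{tede} yields $|\langle\pi(h_l)\varphi_i^{\mathrm{temp}},\varphi_j^{\mathrm{temp}}\rangle|\ll_\epsilon\sqrt{\mu_i\mu_j}\,|l|^{-\kappa\rho(1-\epsilon)}$; since $d\ge2$ we have $\kappa\rho=d\ge2$, so for $\epsilon$ small the exponent exceeds $1$, and then, exactly as at the end of the proof of Proposition~\ref{quasi.prop}, $\sum_{m\le i\ne j\le n}\sqrt{\mu_i\mu_j}\,|i-j|^{-\kappa\rho(1-\epsilon)}\le\sum_i\mu_i\sum_{l\ge1}l^{-\kappa\rho(1-\epsilon)}\ll\sum_{i=m}^n\mu_i$.

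The exceptional part is the crux, and I would treat each of the finitely many forms separately. Fix $\varphi=\varphi_k\in\pi_s$, $s=s_k$, write $c_i=c_{i,k}$ and $a:=\tfrac{\rho+s}{2\rho}=1-\tfrac{\rho-s}{2\rho}$. Proposition~\ref{nE} and H\"older give $|c_i|=|\langle\chi_{B_i},\varphi\rangle|\le\|\varphi\|_p\,\mu_i^{1-1/p}$ for every $p<\tfrac{2\rho}{\rho-s}$, hence $|c_i|\le C_k\,\mu_i^{a'}$ for any fixed $a'<a$; and \eqref{comde} gives $|\langle\pi(h_l)\varphi,\varphi\rangle|\ll_\epsilon|l|^{-\gamma}$ with $\gamma=\kappa(\rho-s)(1-\epsilon)$. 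If $\gamma>1$ then $|c_i|\le\mu_i^{a'}\le\mu_i^{1/2}$ and this block is handled by the tempered-type estimate above; so assume $\gamma\le1$, where $|l|^{-\gamma}$ is no longer absolutely summable. Here I would use $\mu_i\le c_0/i$: setting $\nu_i:=(i\mu_i/c_0)^{a'}\in[0,1]$ (so $\mu_i^{a'}=(c_0/i)^{a'}\nu_i$), the $k$-th contribution is
\[
\ll_{\epsilon,k}\sum_{m\le i\ne j\le n}\frac{\mu_i^{a'}\mu_j^{a'}}{|i-j|^{\gamma}}=c_0^{2a'}\sum_{m\le i\ne j\le n}\frac{\nu_i\nu_j}{i^{a'}j^{a'}|i-j|^{\gamma}}=c_0^{2a'}\big\langle W^{1/2}\nu,\,N\,W^{1/2}\nu\big\rangle,
\]
where $W=\mathrm{diag}(1/i)$ and $N$ is the symmetric off-diagonal kernel $N_{ij}=(i^{a'-1/2}j^{a'-1/2}|i-j|^{\gamma})^{-1}$. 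The decisive point is that $N$ is bounded on $\ell^2(\N)$ whenever $a'>\tfrac12$ and $2a'+\gamma>2$: this follows from a Schur test with a weight $p_i=i^{-\sigma}$ for a suitable $\sigma$, using the elementary estimate $\sum_{j\ne i}j^{-v}|i-j|^{-\gamma}\ll_\gamma i^{1-v-\gamma}$ (valid for $v\in(1-\gamma,1)$, legitimate since $0<\gamma\le1$), which reduces the Schur inequality to $i^{\,2-2a'-\gamma}\ll1$. Both required inequalities are available: $a>\tfrac12$ because $s>0$, and
\[
2a+\kappa(\rho-s)=2+(\rho-s)\big(\kappa-\tfrac1\rho\big)>2,
\]
since $\kappa-\tfrac1\rho=2-\tfrac2d>0$ for $G\cong\SO(d+1,1)$ and $\kappa-\tfrac1\rho=1-\tfrac1d>0$ for $G\cong\SU(d,1)$, using $d\ge2$; taking $\epsilon$ and the loss $a-a'$ small enough preserves $a'>\tfrac12$ and $2a'+\gamma>2$. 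Hence, using $\nu_i\in[0,1]$ and $\tfrac1{a'}\le2$ (so $\nu_i^2\le\nu_i^{1/a'}$) together with $\nu_i^{1/a'}=i\mu_i/c_0$,
\[
\sum_{m\le i\ne j\le n}\frac{\mu_i^{a'}\mu_j^{a'}}{|i-j|^{\gamma}}\le c_0^{2a'}\|N\|\sum_{i=m}^n\frac{\nu_i^2}{i}\le c_0^{2a'}\|N\|\sum_{i=m}^n\frac{\nu_i^{1/a'}}{i}=c_0^{2a'-1}\|N\|\sum_{i=m}^n\mu_i.
\]
Summing over the finitely many exceptional forms, and combining with the tempered estimate and $\sum_{i=m}^n(\mu_i-\mu_i^2)\le\sum_{i=m}^n\mu_i$, yields the stated bound.

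The main obstacle is the exceptional part in the small-spectral-gap regime (some $\gamma\le1$): there $|l|^{-\gamma}$ is not summable, so one cannot simply decouple the indices $i,j$ by AM--GM as in Proposition~\ref{quasi.prop}; instead the $L^p$-improvement $|c_i|\ll\mu_i^{a}$, the decay rate $\gamma$, and the hypothesis $\mu_i\le c_0/i$ must be packaged into the single boundedness statement for the weighted kernel $N$. The Schur-test verification, although elementary, is the technical heart of the argument, and it is precisely where both hypotheses---$d\ge2$ (through $2a+\kappa(\rho-s)>2$) and $\{m\mu(B_m)\}$ bounded---enter.
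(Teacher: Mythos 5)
Your proposal is correct and follows essentially the same route as the paper: the same spectral decomposition into exceptional and tempered parts, the same H\"older/$L^p$ bound $|\langle\chi_{B_i},\varphi_k\rangle|\ll\mu_i^{a'}$ from Proposition \ref{nE}, the same use of the hypothesis $\mu_i\ll 1/i$, and the same decisive exponent inequality $2a+\kappa(\rho-s)=2+(\rho-s)(\kappa-\tfrac1\rho)>2$, which is exactly the paper's condition $\tfrac{2}{q_k}-1+\eta_k>1$. The only difference is in the final packaging: you bound the double sum $\sum_{i\ne j}\mu_i^{a'}\mu_j^{a'}|i-j|^{-\gamma}$ via a Schur test for a weighted kernel, whereas the paper symmetrizes to $\mu_j\le\mu_i$, writes $\mu_i^{1/q_k}\mu_j^{1/q_k}\le\mu_i\mu_j^{2/q_k-1}\ll\mu_i j^{-(2/q_k-1)}$, and sums the inner series directly; both reduce to the same convergence condition (modulo a harmless logarithm at $\gamma=1$ in your elementary estimate, which the strict inequality $2a'+\gamma>2$ absorbs).
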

\begin{proof}
As in the proof of Proposition \ref{quasi.prop}, denote by $\mu_i=\mu(f_i)=\mu(B_i)$ and $\mu_{i,j}=\int_{\mathcal{X}}f_i(x)f_j(x)d\mu(x)$.
Using the spectral decomposition we can write
\begin{displaymath}
\chi_{B_m}=\mu(B_m)+\sum_{k}\langle \chi_{B_m},\varphi_k\rangle \varphi_k + f_m^0,
\end{displaymath}
with $f_m^{0}\in L^2_{\textrm{temp}}(\G\bk \cH)$. Hence for any $i\neq j$,
\begin{eqnarray*}
\mu_{i,j}&=&\langle f_i, f_j\rangle=\langle \pi(h_i)\chi_{B_i}, \pi(h_j)\chi_{B_j}\rangle=\langle \pi(h_{i-j})\chi_{B_i}, \chi_{B_j}\rangle\\
         &=&\mu_i\mu_j+ \sum_k\langle\chi_{B_i},\varphi_k\rangle\overline{\langle \chi_{B_j},\varphi_k\rangle}\langle\pi(h_{i-j})\varphi_k,\varphi_k\rangle+ \langle\pi(h_{i-j})f_i^0, f_j^0\rangle.
\end{eqnarray*}
Thus by Proposition \ref{rod} we have for any small $\epsilon>0$
%\begin{eqnarray*}
$$
|\mu_{i,j}-\mu_i\mu_j|\ll_{\epsilon} %&\sum_k\frac{|\langle\chi_{B_i},\varphi_k\rangle\langle \chi_{B_j},\varphi_k\rangle|}{|i-j|^{\kappa(\rho-s_k)(1-\epsilon)}}+ \frac{\|f_i^0\|_2\|f_j^0\|_2}{|i-j|^{\kappa\rho(1-\epsilon)}}\\
%&\leq&
\sum_k\frac{|\langle\chi_{B_i},\varphi_k\rangle\langle \chi_{B_j},\varphi_k\rangle|}{|i-j|^{\kappa(\rho-s_k)(1-\epsilon)}}+ \frac{\sqrt{\mu_i\mu_j}}{|i-j|^{\kappa\rho(1-\epsilon)}}.
$$
%\end{eqnarray*}
%where for the second inequality we used the bound $\|f_i^0\|_2\|f_j^0\|_2\leq \|\chi_{B_i}\|_2\|\chi_{B_j}\|_2=\sqrt{\mu_i\mu_j}$.

Let $\eta=\kappa\rho(1-\epsilon)$ and note that for all $0<\epsilon<1/4$ we have that $\eta>3/2$ (since $G$ is not locally isomorphic to $SO(2,1)$). Hence, by the same arguments
as in the proof of Proposition \ref{quasi.prop}, for any $0<\epsilon<\frac14$ we can bound
$$\sum_{m\leq i\neq j\leq n}\frac{\sqrt{\mu_i\mu_j}}{|j-i|^{\kappa\rho(1-\epsilon)}}\ll \sum_{i=m}^n \mu_i.$$
%Similarly, for each of the exceptional forms, let $\eta_k=\kappa(\rho-s_k)(1-\epsilon)$ and note that for $\eta_k>1$ using the same argument with the trivial bound
%$|\langle \chi_{B_j},\varphi_k\rangle|\leq \sqrt{\mu_j}$ we get that
%$$\sum_{m\leq i\neq j\leq n}\frac{|\langle\chi_{B_i},\varphi_k\rangle\langle \chi_{B_j},\varphi_k\rangle|}{|i-j|^{\eta_k}}\ll \sum_{i=m}^n \mu_i.$$
Hence, it suffices to show that for each of the finitely many exceptional forms we have
$$\sum_{m\leq i\neq j\leq n}\frac{|\langle\chi_{B_i},\varphi_k\rangle\langle \chi_{B_j},\varphi_k\rangle|}{|i-j|^{\kappa\rho(1-\epsilon)}}\ll_{\epsilon} \sum_{i=m}^n\mu_i.$$
Now let $0<\epsilon_0<\frac14$ be sufficiently small such that $\frac{2\rho}{(\rho+s_k)(1-\epsilon_0)}<2$ for all $s_k$. For $0<\epsilon\leq \epsilon_0$ (to be determined later) let $q_k=\frac{2\rho}{(\rho+s_k)(1-\epsilon)}$ and let $\eta_k=\kappa(\rho-s_k)(1-\epsilon)$. Note that $\frac12<\frac{1}{q_k}<1$.
Recalling that the exceptional form $\varphi_k\in\pi_{s_k}$ is contained in $L^p(\mathcal{X})$ for any $1<p<\frac{2\rho}{\rho-s_k}$ we can bound
%Thus we can bound $|\langle \chi_{B_i}, \varphi_k\rangle \langle \chi_{B_j},\varphi_k\rangle|\ll_{q_k} \mu_i^{\frac{1}{q_k}}\mu_j^{\frac{1}{q_k}}$ for any $q_k=\frac{p}{p-1}> \frac{2\rho}{\rho+s_k}$.
%$q_k=\frac{2\rho}{(\rho+s_k)(1-\epsilon)}$ with $0<\epsilon<\frac12$ is sufficiently small so that  $\frac{2\rho}{(\rho+s_k)(1-\epsilon_0)}<2$ for all exceptional forms, and bound
$$|\langle \chi_{B_i}, \varphi_k\rangle \langle \chi_{B_j},\varphi_k\rangle|\ll_{\epsilon} \mu_i^{\frac{1}{q_k}}\mu_j^{\frac{1}{q_k}},$$
and hence together with symmetry we can bound
\begin{displaymath}
\sum_{m\leq i\neq j\leq n}\frac{|\langle\chi_{B_i},\varphi_k\rangle\langle \chi_{B_j},\varphi_k\rangle|}{|i-j|^{\kappa(\rho-s_k)(1-\epsilon)}}%&=&2\sum_{M\leq i< j\leq N}\frac{|\langle\chi_{B_i},\varphi_k\rangle\langle \chi_{B_j},\varphi_k\rangle|}{(j-i)^{\kappa(\rho-s_k)(1-\epsilon)}}\\
\ll \sum_{m\leq i\neq j\leq n}\frac{\mu_i^{\frac{1}{q_k}}\mu_j^{\frac{1}{q_k}}}{|j-i|^{\eta_k}}\\
\ll \mathop{\sum_{m\leq i\neq j\leq n}}_{\mu_j\leq \mu_i}\frac{\mu_i^{\frac{1}{q_k}}\mu_j^{\frac{1}{q_k}}}{|j-i|^{\eta_k}}.\\
\end{displaymath}
Since $\frac{1}{q_k}<1$, for $\mu_j\leq \mu_i$ we can bound
$\mu_i^{\frac{1}{q_k}}\mu_j^{\frac{1}{q_k}}\leq \mu_i\mu_j^{\frac{2}{q_k}-1}$
to get that
\begin{align*}
\mathop{\sum_{m\leq i\neq j\leq n}}_{\mu_j\leq \mu_i}\frac{\mu_i^{\frac{1}{q_k}}\mu_j^{\frac{1}{q_k}}}{|j-i|^{\eta_k}}\leq \mathop{\sum_{m\leq i\neq j\leq n}}_{\mu_j\leq \mu_i}\frac{\mu_i \mu_j^{\frac{2}{q_k}-1}}{|j-i|^{\eta_k}}.
\end{align*}
Now using the assumption that  $\{\ell \mu_{\ell}\}_{\ell\in\N}$ is uniformly bounded, (and noting that $\tfrac{2}{q_k}>1$) we can bound
$\mu_j^{\frac{2}{q_k}-1}\ll
\frac{1}{j^{\frac{2}{q_k}-1}}$ so that
$$\mathop{\sum_{m\leq i\neq j\leq n}}_{\mu_j\leq \mu_i}\frac{\mu_i \mu_j^{\frac{2}{q_k}-1}}{|j-i|^{\eta_k}}\ll \sum_{m\leq i\leq n}\mu_i\left(\mathop{\sum_{m\leq j\leq n}}_{\mu_j\leq \mu_i, j\neq i}\frac{1}{j^{\frac{2}{q_k}-1}|j-i|^{\eta_k}}\right).$$

%\begin{align*}
% \sum_{m\leq i< j\leq n}\frac{\mu_i^{\frac{1}{q_k}}\mu_j^{\frac{1}{q_k}}}{(j-i)^{\eta_k}} &\leq \sum_{m\leq i<j\leq n}\frac{\mu_i\mu_{j-i}^{\frac{2}{q_k}-1}}{(j-i)^{\eta_k}}\\
%&=\sum_{m\leq i<n}\mu_i\left(\sum_{i<j\leq n}\frac{\mu_{j-i}^{\frac{2}{q_k}-1}}{(j-i)^{\eta_k}}\right)\\
%&=\sum_{m\leq i< n}\mu_i\left(\sum_{\ell=1}^{n-i}\frac{\mu_{\ell}^{\frac{2}{q_k}-1}}{\ell^{\eta_k}}\right).
%\end{align*}

%Since the inner sum is bounded by the series $\sum_{\ell=1}^{\infty}\frac{\mu_{\ell}^{\frac{2}{q_k}-1}}{\ell^{\eta_k}}$ it is enough to show that this series converges.
%Using the assumption that  $\{\ell \mu_{\ell}\}_{\ell\in\N}$ is uniformly bounded, we can bound
%$$\sum_{\ell=1}^{\infty}\frac{\mu_{\ell}^{\frac{2}{q_k}-1}}{\ell^{\eta_k}}\ll\sum_{\ell=1}^{\infty}\frac{1}{\ell^{\frac{2}{q_k}-1+\eta_k}}.$$
For the cases we consider we have that  $\kappa-\frac{1}{\rho}\geq 1/2$ and we can estimate the exponent
$$\frac{2}{q_k}-1+\eta_k=\tfrac{(\rho+s_k)(1-\epsilon)}{\rho}-1+\kappa(\rho-s_k)(1-\epsilon)\geq(1-\epsilon)\left(2+\tfrac{\tau}{2}\right)-1,$$
where $\tau=\tau(\Gamma)$ is the spectral gap parameter for $\Gamma$.
Taking $\epsilon=\min\{\frac12\epsilon_0, \frac{\tau}{2\tau+8}\}$ we get that
$\frac{2}{q_k}-1+\eta_k\geq 1+\frac14\tau$. Hence for any $m\leq i\leq n$, the sum
\begin{align*}
\mathop{\sum_{m\leq j\leq n}}_{\mu_j\leq \mu_i, j\neq i}\frac{1}{j^{\frac{2}{q_k}-1}|j-i|^{\eta_k}}&\leq\mathop{\sum_{j> 0}}_{ j\neq i}\left(\frac{1}{j^{\frac{2}{q_k}-1+\eta_k}}+\frac{1}{|i-j|^{\frac{2}{q_k}-1+\eta_k}}\right)\\
&\ll\sum_{\ell\neq 0}\frac{1}{|\ell|^{\frac{2}{q_k}-1+\eta_k}}\leq \sum_{\ell\neq 0}\frac{1}{|\ell|^{1+\frac14\tau}}<\infty,
\end{align*}
is uniformly bounded and thus concluding the proof.
\end{proof}

Next, we consider the case where $\{m\mu(B_m)\}_{m\in\mathbb{N}}$ is unbounded. For this case we use results of the effective mean ergodic theorem to show the following.
\begin{Prop}\label{p:unbounded}
For $G$ locally isomorphic to $\SO(d+1,1)$ or $\SU(d,1)$  with $d\geq 2$, let $\{B_m\}_{m\in\mathbb{N}}$ be a monotone family of spherical shrinking targets in $\mathcal{X}=\Gamma\backslash G$ satisfying that  $\sum_{m=1}^{\infty}\mu(B_m)=\infty$ and that $\left\{m\mu(B_m)\right\}_{m\in\mathbb{N}}$ is unbounded. Then there is a subsequence $m_j$ with $m_j\mu(B_{m_j})\to \infty$ satisfying that for a.e. $x\in\mathcal{X}$
$$\lim_{j\to\infty}\frac{\#(xH^+_{m_j}\cap B_{m_j})}{m_j\mu(B_{m_j})}=1.$$
%In particular, $\#\left\{1\leq \ell\leq m_j\ |\ xg_{\ell}\in B_{m_j}\right\}\to \infty$ as $j\to\infty$.
\end{Prop}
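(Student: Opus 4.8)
The plan is to deduce the statement from the effective mean ergodic theorem for these groups, Proposition~\ref{ewot}, in the quantitative form established while proving Proposition~\ref{est}, after passing to a sufficiently sparse subsequence. First observe that for $f=\chi_{B_m}$ one has $\#(xH_m^+\cap B_m)=\sum_{j=1}^m\chi_{B_m}(xh_j)=m\,\beta_m^+(\chi_{B_m})(x)$, so the desired conclusion $\frac{\#(xH_{m_j}^+\cap B_{m_j})}{m_j\mu(B_{m_j})}\to 1$ is equivalent to $\frac{\beta_{m_j}^+(\chi_{B_{m_j}})(x)}{\mu(B_{m_j})}\to 1$ for a.e.\ $x$.

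The key input is a variance bound that comes out of the same computation used to prove Proposition~\ref{est}(3): combining Proposition~\ref{ewot} with the H\"older estimate $|\langle\chi_B,\varphi_k\rangle|\ll_\epsilon\mu(B)^{1/q_k}$ for the finitely many spherical exceptional forms, one obtains, for every spherical set $B\subset\mathcal{X}$,
\begin{equation*}
\int_{\mathcal{X}}\left(\frac{\beta_m^+(\chi_B)(x)}{\mu(B)}-1\right)^2d\mu(x)=\frac{\|\beta_m^+(\chi_B)-\mu(B)\|_2^2}{\mu(B)^2}\ll\frac{1}{m\mu(B)}+\frac{1}{\left(m\mu(B)^{2/3}\right)^{2\tau(\G)/3}},
\end{equation*}
where $\tau(\G)>0$ is the spectral gap parameter from \eqref{e:SpectralGap}, which is strictly positive because the exceptional spectrum of the fixed lattice $\G$ is finite.

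I would then use the hypothesis that $\{m\mu(B_m)\}_{m\in\N}$ is unbounded to choose an increasing sequence $m_1<m_2<\cdots$ with $m_j\mu(B_{m_j})\ge 2^j$; this is possible since $\{m:m\mu(B_m)>R\}$ is infinite for every $R$. Then $m_j\mu(B_{m_j})\to\infty$, and since $\mu(B_{m_j})\le 1$ and hence $\mu(B_{m_j})^{2/3}\ge\mu(B_{m_j})$, the variance bound above gives
\begin{equation*}
\sum_{j\ge 1}\int_{\mathcal{X}}\left(\frac{\beta_{m_j}^+(\chi_{B_{m_j}})(x)}{\mu(B_{m_j})}-1\right)^2d\mu(x)\ll\sum_{j\ge 1}\left(2^{-j}+2^{-2j\tau(\G)/3}\right)<\infty.
\end{equation*}
By Tonelli's theorem the series of nonnegative integrands converges for a.e.\ $x\in\mathcal{X}$, so its general term tends to $0$ for a.e.\ $x$; that is, $\frac{\beta_{m_j}^+(\chi_{B_{m_j}})(x)}{\mu(B_{m_j})}\to 1$, which is precisely the asserted limit.

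The one point that requires care is the choice of the subsequence: one must make $m_j\mu(B_{m_j})$ grow quickly enough that \emph{both} terms of the variance bound are summable along $\{m_j\}$. The crucial structural fact making this work is that the exceptional–spectrum contribution decays like a fixed positive power of $m\mu(B_m)^{2/3}$ --- this is where the strict positivity of $\tau(\G)$ enters --- so that once $m_j\mu(B_{m_j})$ is forced to infinity at any geometric rate both terms become summable; absent a spectral gap the second term need not be summable along any subsequence with $m_j\mu(B_{m_j})\to\infty$, and the argument would break down.
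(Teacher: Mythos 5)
Your proposal is correct and follows essentially the same route as the paper: both rest on the effective mean ergodic theorem (Proposition~\ref{ewot}) together with the $L^p$ bounds on the finitely many exceptional forms, which yield a normalized variance bound for $\beta_m^+(\chi_{B_m})/\mu(B_m)$ in terms of $m\mu(B_m)$, and then extract a subsequence using the unboundedness of $\{m\mu(B_m)\}$. The only (harmless) difference is in the last step: you force $m_j\mu(B_{m_j})\geq 2^j$ so that the variances are summable and a.e.\ convergence follows directly from Tonelli, whereas the paper only concludes $\|\beta_{m_j}^+(f_{m_j})-1\|_2\to 0$ and then passes to a further subsequence to upgrade to a.e.\ convergence.
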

\begin{proof}
For any $m\in \mathbb{N}$ let $f_m=\frac{\chi_{B_m}}{\mu(B_m)}$ and note that $\beta_m^+(f_m)(x)=\frac{\#(xH^+_m\cap B_m)}{m\mu(B_{m})}$. Hence it suffices to show that there is some subsequence $\left\{m_j\right\}$ such that $\beta_{m_j}(f_{m_j})(x)\to 1$ as $j\to\infty$ for a.e. $x\in \mathcal{X}$. By Proposition \ref{ewot} we have
$$\|\beta_m^+(f_m)-1\|_2\ll_{\epsilon} \frac{1}{\sqrt{m\mu(B_m)}}+\sum_{s_k\in [\rho-\frac{1}{\kappa},\rho)}\frac{\left|\langle f_m,\varphi_k\rangle\right|}{m^{\frac{\kappa}{2}(\rho-s_k)(1-\epsilon)}}.$$
Recall that we can bound $\left|\langle f_m,\varphi_k\rangle\right|\ll_q \|f_m\|_q$ for any $q> \frac{2\rho}{\rho+s_k}$. Now for $0<\epsilon< 1-\frac{1}{\kappa\rho}$, for any $s_k\in [\rho-\frac{1}{\kappa}, \rho)$ we have $q_k=\frac{1}{1-\frac{\kappa}{2}(\rho-s_k)(1-\epsilon)}> \frac{2\rho}{\rho+s_k}$ and hence $\left|\langle f_m,\varphi_k\rangle\right|\ll_{\epsilon}\|f_m\|_{q_k}=\frac{1}{\mu(B_m)^{1-\frac{1}{q_k}}}=\frac{1}{\mu(B_m)^{\frac{\kappa}{2}(\rho-s_k)(1-\epsilon)}}$. Thus for any $0<\epsilon< 1-\frac{1}{\kappa\rho}$ we have
$$\|\beta_m^+(f_m)-1\|_2\ll_{\epsilon} \frac{1}{\sqrt{m\mu(B_m)}}+\sum_{s_k\in [\rho-\frac{1}{\kappa},\rho)}\frac{1}{\left(m\mu(B_m)\right)^{\frac{\kappa}{2}(\rho-s_k)(1-\epsilon)}}.$$
Since $\left\{m\mu(B_m)\right\}_{m\in \mathbb{N}}$ is unbounded, there is a subsequence satisfying that $m_j\mu(B_{m_j})\to\infty$, for which $\|\beta_{m_j}(f_{m_j})-1\|_2\to 0$ as $j\to \infty$. Passing to another subsequence, if necessary, we get that $\beta_{m_j}(f_{m_j})(x)\to 1$ for a.e. $x\in \mathcal{X}$.
\end{proof}

We now combine the two cases to complete the proof.
\begin{proof}[Proof of Theorem \ref{t:MBC}]
Let $G$ be locally isomorphic to $\SO(d+1,1)$ or $\SU(d,1)$  with $d\geq 2$, and let $\{B_m\}_{m\in \N}$ denote a monotone family of spherical shrinking targets with $\sum_m \mu(B_m)=\infty$.
%Since we assume the sequence is monotone, in particular we have that $\{\mu(B_m)\}_{m\in\mathbb{N}}$ is nonincreasing.
Now, if the sequence $\{m\mu(B_m)\}_{m\in \N}$ is bounded then by Proposition \ref{p:bounded} we have that
for a.e. $x\in \cX$,
$$\lim_{m\to\infty} \frac{\#\{1\leq j\leq m: xh_j\in B_j\}}{\sum_{1\leq j\leq m} \mu(B_j)}=1.$$

If the sequence  $\{m\mu(B_m)\}_{m\in \N}$  is unbounded then by Proposition \ref{p:unbounded}
there is a subsequence $m_j$ such that for a.e. $x\in \cX$
$$\lim_{j\to\infty}\frac{\#\{1\leq i\leq m_j: xh_i\in B_{m_j}\}}{m_j \mu(B_{m_j})}=1.$$
In both cases, for a.e. $x\in \cX$ the set $\{m\in\N: xh_m\in B_m\}$ is unbounded, so $\{B_m\}_{m\in \N}$ is BC for this flow.
\end{proof}

\bibliographystyle{alpha}
\bibliography{DKbibliog}

\begin{thebibliography}{How82}

\bibitem[AM09]{AthreyaMargulis09}
J.~S. Athreya and G.~A. Margulis.
\newblock Logarithm laws for unipotent flows. {I}.
\newblock {\em J. Mod. Dyn.}, 3(3):359--378, 2009.

\bibitem[AM17]{AthreyaMargulis14}
J.~S. Athreya and G.~A. Margulis.
\newblock Logarithm laws for unipotent flows, {II}.
\newblock {\em J. Mod. Dyn.}, 11:1--16, 2017.

\bibitem[BB11]{BlomerBrumley11}
V.~Blomer and F.~Brumley.
\newblock On the {R}amanujan conjecture over number fields.
\newblock {\em Ann. of Math. (2)}, 174(1):581--605, 2011.

\bibitem[BC13]{BergeronClozel13}
N.~Bergeron and L.~Clozel.
\newblock Quelques cons\'equences des travaux d'{A}rthur pour le spectre et la
  topologie des vari\'et\'es hyperboliques.
\newblock {\em Invent. Math.}, 192(3):505--532, 2013.

\bibitem[BC17]{BergeronClozel17}
N.~Bergeron and L.~Clozel.
\newblock Sur le spectre et la topologie des vari{\'e}t{\'e}s hyperboliques de
  congruence: les cas complexe et quaternionien.
\newblock {\em Mathematische Annalen}, 368(3):1333--1358, Aug 2017.

\bibitem[BS91]{BurgerSarnak1991}
M.~Burger and P.~Sarnak.
\newblock Ramanujan duals {I}{I}.
\newblock {\em Invent. Math}, 106:1--11, 1991.

\bibitem[CK01]{ChernovKleinbock01}
N.~Chernov and D.~Kleinbock.
\newblock Dynamical {B}orel-{C}antelli lemmas for {G}ibbs measures.
\newblock {\em Israel J. Math.}, 122:1--27, 2001.

\bibitem[Dol04]{Dolgopyat04}
D.~Dolgopyat.
\newblock Limit theorems for partially hyperbolic systems.
\newblock {\em Trans. Amer. Math. Soc.}, 356(4):1637--1689 (electronic), 2004.

\bibitem[Gal07]{Galatolo07}
S.~Galatolo.
\newblock Dimension and hitting time in rapidly mixing systems.
\newblock {\em Math. Res. Lett.}, 14(5):797--805, 2007.

\bibitem[GK07]{GalatoloKim07}
S.~Galatolo and D.~H. Kim.
\newblock The dynamical {B}orel-{C}antelli lemma and the waiting time problems.
\newblock {\em Indag. Math. (N.S.)}, 18(3):421--434, 2007.

\bibitem[GK17]{GhoshKelmer17}
A.~Ghosh and D.~Kelmer.
\newblock Shrinking targets for semisimple groups.
\newblock {\em Bull. Lond. Math. Soc.}, 49(2):235--245, 2017.

\bibitem[GS11]{GorodnikShah11}
A.~Gorodnik and N.~A. Shah.
\newblock Khinchin's theorem for approximation by integral points on quadratic
  varieties.
\newblock {\em Math. Ann.}, 350(2):357--380, 2011.

\bibitem[GV12]{RV12}
R.~Gangolli and V.~S. Varadarajan.
\newblock {\em Harmonic analysis of spherical functions on real reductive
  groups}, volume 101.
\newblock Springer Science \& Business Media, 2012.

\bibitem[Hel78]{Helgason78}
S.~Helgason.
\newblock {\em Differential geometry, {L}ie groups, and symmetric spaces},
  volume~80 of {\em Pure and Applied Mathematics}.
\newblock Academic Press, Inc. [Harcourt Brace Jovanovich, Publishers], New
  York-London, 1978.

\bibitem[How82]{Howe82}
R.~Howe.
\newblock On a notion of rank for unitary representations of the classical
  groups.
\newblock In {\em Harmonic analysis and group representations}, pages 223--331.
  Liguori, Naples, 1982.

\bibitem[Kel17]{Kelmer17b}
D.~Kelmer.
\newblock Shrinking targets for discrete time flows on hyperbolic manifolds.
\newblock {\em Geom. Funct. Anal.}, Sep 2017.

\bibitem[KM96]{KleinbockMargulis1996}
D.~Y. Kleinbock and G.~A. Margulis.
\newblock Bounded orbits of nonquasiunipotent flows on homogeneous spaces.
\newblock {\em Amer. Math. Soc. Transl.}, 171:141--172, 1996.

\bibitem[KM99]{KleinbockMargulis1999}
D.~Y. Kleinbock and G.~A. Margulis.
\newblock Logarithm laws for flows on homogeneous spaces.
\newblock {\em Invent. Math.}, 138(3):451--494, 1999.

\bibitem[KM12]{KelmerMohammadi12}
D.~Kelmer and A.~Mohammadi.
\newblock Logarithm laws for one parameter unipotent flows.
\newblock {\em Geom. Funct. Anal.}, 22(3):756--784, 2012.

\bibitem[Kna02]{Knapp02}
A.~W. Knapp.
\newblock {\em Lie groups beyond an introduction}, volume 140 of {\em Progress
  in Mathematics}.
\newblock Birkh\"auser Boston, Inc., Boston, MA, second edition, 2002.

\bibitem[Kos69]{Kostant69}
B.~Kostant.
\newblock On the existence and irreducibility of certain series of
  representations.
\newblock {\em Bull. Amer. Math. Soc.}, 75:627--642, 1969.

\bibitem[KS03]{KimSarnak03}
H.~H. Kim and P.~Sarnak.
\newblock Refined estimates towards the {R}amanujan and {S}elberg conjectures,
  appendix to {H}. {H}. {K}im, {F}unctoriality for the exterior square of
  {$GL_4$} and the symmetric fourth of {$GL_2$}, with appendix 1 by {D}inakar
  {R}amakrishnan and appendix 2 by {K}im and {P}eter {S}arnak.
\newblock {\em J. Amer. Math. Soc.}, 16(1), 2003.

\bibitem[KW17]{KleinbockWadleigh2017}
D.~Kleinbock and N.~Wadleigh.
\newblock An inhomogeneous dirichlet theorem via shrinking targets.
\newblock {\em arXiv preprint arXiv:1709.04082}, 2017.

\bibitem[KZ17]{KleinbockZhao2017}
D.~{Kleinbock} and X.~{Zhao}.
\newblock {An application of lattice points counting to shrinking target
  problems}.
\newblock {\em ArXiv e-prints}, January 2017.

\bibitem[Oh98]{Oh98}
H.~Oh.
\newblock Tempered subgroups and representations with minimal decay of matrix
  coefficients.
\newblock {\em Bull. Soc. Math. France}, 126(3):355--380, 1998.

\bibitem[Oh02]{Oh02}
H.~Oh.
\newblock Uniform pointwise bounds for matrix coefficients of unitary
  representations and applications to {K}azhdan constants.
\newblock {\em Duke Math. J.}, 113(1):133--192, 2002.

\bibitem[Rag04]{Raghunathan04}
M.~S. Raghunathan.
\newblock The congruence subgroup problem.
\newblock {\em Proc. Indian Acad. Sci. Math. Sci.}, 114(4):299--308, 2004.

\bibitem[Rap92]{Rapinchuk92}
A.~S. Rapinchuk.
\newblock Congruence subgroup problem for algebraic groups: old and new.
\newblock {\em Ast\'erisque}, (209):11, 73--84, 1992.
\newblock Journ\'ees Arithm\'etiques, 1991 (Geneva).

\bibitem[Sar05]{Sarnak05}
P.~Sarnak.
\newblock Notes on the generalized {R}amanujan conjectures.
\newblock In {\em Harmonic analysis, the trace formula, and {S}himura
  varieties}, volume~4 of {\em Clay Math. Proc.}, pages 659--685. Amer. Math.
  Soc., Providence, RI, 2005.

\bibitem[Sha04]{Shahidi04}
F.~Shahidi.
\newblock On the {R}amanujan conjecture for quasisplit groups.
\newblock {\em Asian J. Math.}, 8(4):813--835, 2004.

\bibitem[Spr79]{Sp79}
V.~G. Sprind{\v z}uk.
\newblock {\em Metric theory of {D}iophantine approximations}.
\newblock V. H. Winston \&\ Sons, Washington, D.C.; A Halsted Press Book, John
  Wiley \&\ Sons, New York-Toronto, Ont.-London, 1979.
\newblock Translated from the Russian and edited by Richard A. Silverman, With
  a foreword by Donald J. Newman, Scripta Series in Mathematics.

\bibitem[Sul82]{Sullivan1982}
D.~Sullivan.
\newblock Disjoint spheres, approximation by imaginary quadratic numbers, and
  the logarithm law for geodesics.
\newblock {\em Acta Math.}, 149(3-4):215--237, 1982.

\bibitem[Yu17]{Yu17}
S.~Yu.
\newblock Logarithm laws for unipotent flows on hyperbolic manifolds.
\newblock {\em J. Mod. Dyn.}, 11(03):447--476, 2017.

\end{thebibliography}

\end{document}